\def\Bbb{\mathbb}
\def\Cal{\mathcal}
\def\Dt{\partial_t}
\def\eb{\varepsilon}
\def\R {\mathbb{R}}
\def \pt {\partial_t}
\def\<{\left<}
\def\>{\right>}
\def\Ree{\operatorname{Re}}
\def\Imm{\operatorname{Im}}
\def\Nx{\px}
\def\Dx{\px^2}
\def\({\left(}
\def\){\right)}
\def\Cal{\mathcal}
\def\Bbb{\mathbb}
\newtheorem{proposition}{Proposition}[section]
\newtheorem{theorem}[proposition]{Theorem}
\newtheorem{corollary}[proposition]{Corollary}
\newtheorem{lemma}[proposition]{Lemma}
\theoremstyle{definition}
\newtheorem{definition}[proposition]{Definition}
\newtheorem{remark}[proposition]{Remark}
\numberwithin{equation}{section}
\def \no#1#2#3 {{\bf #1} (#3), #2.}
\def \eds#1#2#3 {#1, #2, #3.}
\title[Inertial manifolds for RDA equations] {Inertial manifolds for 1D reaction-diffusion-advection systems. Part II: periodic boundary conditions}
\author[A. Kostianko and S. Zelik]{ Anna Kostianko${}^1$ and Sergey Zelik${}^1$}
\address{${}^1$
University of Surrey, Department of Mathematics,
Guildford, GU2 7XH, United Kingdom, a.kostianko@surrey.ac.uk, s.zelik@surrey.ac.uk.}
\subjclass[2000]{35B40, 35B45}
\keywords{Inertial manifolds, convective reaction-diffusion equation}
\begin{document}
\begin{abstract} This is the second part of our study of the Inertial Manifolds for 1D systems of reaction-diffusion-advection equations initiated in \cite{KZI} and it is devoted to the case of periodic boundary conditions. It is shown that, in contrast to the case of Dirichlet or Neumann boundary conditions, considered in the first part, Inertial Manifolds may not exist in the case of systems endowed by periodic boundary conditions. However, as also shown,  inertial manifolds still exist in the case of scalar reaction-diffusion-advection equations. Thus, the existence or non-existence of inertial manifolds for this class of dissipative systems strongly depend on the choice of boundary conditions.
\end{abstract}
\thanks{This work is partially supported by  the grants  14-41-00044 and 14-21-00025 of RSF as well as  grants 14-01-00346  and 15-01-03587 of RFBR}
\maketitle
\tableofcontents
\def\R{\Bbb R}
\def\Dt{\partial_t}
\def\Dx{\partial^2_x}
\def\Nx{\partial_x}
\def\eb{\varepsilon}
\section{Introduction}
This paper can be considered as a continuation of our study of the Inertial Manifolds (IMs) for the so-called 1D reaction-diffusion-advection (RDA) systems of the form
\begin{equation}\label{0.main}
\partial_tu-\Dx u+u+f(u)\partial_x u+g(u)=0,\ \ x\in(-\pi,\pi),
\end{equation}
initiated in \cite{KZI} although can be read independently. Here $u=u(t,x)=(u_1,\cdots, u_m)$ is an unknown vector-valued function and $f$ and $g$ are given nonlinearities which are assumed belonging to the space $C_0^\infty$.
\par
It is well-known that the existence of IM usually requires strong extra assumptions on the dissipative system considered. For instance, for the abstract parabolic equation in a Hilbert space $H$:
\begin{equation}\label{0.abs}
\Dt u+Au=F(u),
\end{equation}
where $A: D(A)\to H$ is a linear positive self-adjoint operator with compact inverse and $F: H^\beta\to H$, $H^\beta:=D(A^{\beta/2})$, is a nonlinear globally Lipschitz operator, the spectral gap conditions read
\begin{equation}\label{0.sg}
\frac{\lambda_{N+1}-\lambda_N}{\lambda_{N+1}^{\beta/2}+\lambda_N^{\beta/2}}>L.
\end{equation}
Here $N$ is the dimension of the IM, $\{\lambda_n\}_{n=1}^\infty$ are the eigenvalues of $A$ enumerated in the non-decreasing order, $0\le \beta<2$ and $L$ is a Lipschitz constant of the nonlinearity $F$, see \cites{FST,mik,28,rom-man,Zel2} for more details. If this condition is satisfied, then there exists a Lipschitz ($C^{1+\eb}$-smooth for some small positive $\eb>0$ and normally hyperbolic if $F$ is smooth enough) invariant manifold of dimension $N$ in $H$ with the exponential tracking property. Thus, restricting equation \eqref{0.abs} to this manifold, we get a system of ODEs describing the limit dynamics generated by \eqref{0.abs} - the so-called inertial form (IF) of this equation. It is also known that, at least on the level of abstract equation \eqref{0.abs}, the spectral gap conditions \eqref{0.sg} are sharp and the IM may not exist if they are violated. Moreover, in this case the associated dynamics may also be infinite-dimensional despite the fact that the global attractor exists and has finite box-counting dimension, see \cites{EKZ,sell-counter,rom-3,Zel2} for the details.
\par
In the case of RDA equations \eqref{0.main}, $A:=-\Dx+1$ endowed by the proper boundary conditions, $H:=L^2(-\pi,\pi)$ and the nonlinearity $F(u):=f(u)\partial_xu+g(u)$ maps $H^1(-\pi,\pi)$ to $H$ (and can be made globally Lipschitz after the proper cut-off procedure), so $\beta=1$, $\lambda_n\sim n^2$ and the spectral gap conditions read
\begin{equation}
\frac{\lambda_{N+1}-\lambda_N}{\lambda_N^{1/2}+\lambda_{N+1}^{1/2}}\sim C\frac{(N+1)^2-N^2}{N+N+1}=C>L,
\end{equation}
where $C$ is independent of $N$. Thus, the nonlinearity in the RDA system \eqref{0.main} is in a sense critical from the point of view of the IM theory and the spectral gap conditions are satisfied only in the case where the Lipschitz constant $L$ of the nonlinearity is small enough (no matter what $N$ is). By this reason, the existence or non-existence of IMs for RDA equations with arbitrarily large nonlinearities was a long-standing open problem. We also remind that, in the scalar case $m=1$ there is the so-called Romanov theory which allows us to construct the IF with Lipschitz continuous nonlinearities without using IMs, see \cites{rom-th,rom-th1} (and also \cites{Kuk,Zel2}) but this result is essentially weaker than what we may get from IMs, namely, this IF works on the attractor only, does not possess any normal hyperbolicity/exponential tracking property and the smoothness of the associated vector field is restricted to $C^{0,1}$ (Lipschitz continuity only). Note that the regularity of the reduced IF equations is a crucial point here since the $C^{\alpha}$-smooth IFs with $\alpha<1$ can be constructed based  only on the Man\'e projection theorem and the fact that the box-counting dimension of the attractor is finite, see \cites{EKZ,28}, and this "reduction" works even in the examples where the dynamics on the attractor is clearly not finite-dimensional, see \cite{Zel2} for the details.
\par
 On the other hand,
as conjectured, such Lipschitz continuous IFs may be natural extensions of the concept of the IM to dissipative systems which do not satisfy the spectral gap conditions and do not possess IMs. However, to the best of our knowledge, up to the moment there was only one candidate where the Romanov theory works and the existence of the IM was unknown and this is exactly the class of scalar 1D RDA equations. Thus, one of the motivations of our study is to clarify at least on this model example whether   the existence  of Lipschitz continuous IFs is caused by the existence of IMs or the Romanov theory is indeed a step beyond the IMs. Another source of interest is related with the fact that Burgers or coupled Burgers equations (which are the particular cases of RDA equations) are often considered as simplified models for the Navier-Stokes equations and turbulence, so clarifying the situation with RDA equations may bring some light on the main open problem of the IM theory, namely, existence or non-existence of IMs for the Navier-Stokes equations.
\par
As follows from our investigation,  the existence of IMs for 1D RDA equations strongly depends on the type of boundary conditions chosen. We have considered three types of BC: Dirichlet, Neumann and periodic ones. As shown in the first part of our study \cite{KZI}, in the case of Dirichlet boundary conditions the problem can be settled by transforming our equation to the new one for which the spectral gap conditions will be satisfied using the trick with the non-local change of variables $u=a(t,x)w$, where $a$ is a properly chosen matrix depending on~$w$. Indeed, the new independent variable $w$ solves
\begin{multline}\label{0.trans}
\Dt w - \partial^2_x w = \{a^{-1}(2 \partial_xa - f(aw)a)\partial_x w\} +\\+ \{a^{-1}[\partial_x^2 a - \Dt a - f(aw)\partial_x a]w - a^{-1}g(aw)\}:=\mathcal F_1(w)+\mathcal F_2(w).
\end{multline}
This guesses the choice of the matrix $a$. The naive one would be to fix it as a solution of the following ODE
\begin{equation}\label{0.wrong}
\frac{d}{dx}a = \frac12 f(aw)a=\frac12f(u)a, \ \ a|_{x=-\pi} =\operatorname{Id}.
\end{equation}
Then the operator $\mathcal F_1(w)$ disappears, but the second one will still consume smoothness ($\mathcal F_2:H^1\to H$) due to the presence of $\Dx a$ and $\partial_ta$ and we will achieve nothing. However, a bit more clever choice
\begin{equation}\label{0.right}
\frac{d}{dx}a = \frac12 f(P_K(aw))a=\frac12f(P_Ku)a, \ \ a|_{x=-\pi} =\operatorname{Id},
\end{equation}
where $P_K$ is the orthoprojector to the first $K$ eigenvectors of $A:=-\Dx+1$, actually solves the problem. Indeed, in this case, $a$ depends on  $w$ (or $u$)  through the smoothifying operator $P_Kw$, so $\Dx a$ and $\partial_t a$ will not consume smoothness and the operator $\mathcal F_2$ will map $H^1(-\pi,\pi)$ to $H^1(-\pi,\pi)$. On the other hand, the map
$$
\mathcal F_1(w):=a^{-1}(f(P_K(aw))-f(aw))a\partial_x w
$$
can be made small (as an operator from $H^1(-\pi,\pi)$ to $H$) by fixing $K$ large enough, see \cite{KZI} for the details. Finally, as shown in \cite{KZI}, the map $u\to w$ is a diffeomorphism in the neighbourhood of the global attractor if $K$ is large enough and the transformed equation \eqref{0.trans} satisfies (after the proper cut-off procedure) the spectral gap conditions and possesses the IM. This gives the positive answer on the IM's existence problem for 1D RDA systems endowed by Dirichlet boundary conditions.
\par
The case of Neumann boundary conditions is more delicate due to the fact that the transform $u=aw$ {\it does not preserve} these boundary conditions and as a result, we would have the nonlinear and non-local boundary conditions for the transformed equation \eqref{0.trans}. Since nothing is known about the IMs for such type of BC even in the simplest cases, making this transform does not look as a good idea. Fortunately, there is an alternative way to handle this problem, namely, to reduce the Neumann BC to the Dirichlet one by differentiating the equations in $x$. Indeed, let $v=\partial_x u$. Then functions $(u,v)$ solve
\begin{equation}\label{0.ND}
\begin{cases}
\Dt u-\Dx u+u+f(u)v+g(u)=0,\ \ \partial_xu\big|_{x=-\pi}=\partial_x u\big|_{x=\pi}=0\\
\Dt v-\Dx v+v+f(u)\partial_x v+f'(u)v^2+g'(u)v=0,\ \ v\big|_{x=-\pi}=v\big|_{x=\pi}=0.
\end{cases}
\end{equation}
Since the first equation does not contain first derivatives in $x$, it is enough to transform the second component $v=a(t,x)w$ and this component has Dirichlet boundary conditions and the above mentioned problem with boundary conditions is overcome, see \cite{KZI} for the details. Thus, in the case of Neumann boundary conditions, the initial RDA system can be embedded into a larger RDA system which possesses an IM, so the answer on the question about the existence of IMs in this case is also positive.
\par
This paper is devoted to the most complicated case of periodic boundary conditions. As we will see, there is a principal difference between the scalar ($m=1$) and vector ($m>1$) cases. In the scalar case,
it is possible to modify the equation for $a$ as follows
\begin{equation}\label{0.right-per}
\frac{d}{dx}a = \frac12 [f(P_K(aw))-\<f(P_K(aw)\>]a=\frac12[f(P_Ku)-\<f(P_Ku)\>]a, \ \ a|_{x=-\pi} =\operatorname{Id},
\end{equation}
where $\<U\>:=\frac1{2\pi}\int_{-\pi}^{\pi}U(x)\,dx$ is the spatial mean of the function $U$. This extra term makes the function $a$ $2\pi$-periodic in space (so the associated transform will preserve the periodic boundary conditions). On the other hand, it leads to the extra term
$$
\mathcal F_3(w):=\<f(P_K(aw)\>\partial_x w
$$
in the right-hand side of the transformed equations \eqref{0.trans} which is {\it not small} and still consumes smoothness ($\mathcal F_3$ maps $H^1(-\pi,\pi)$ to $H$ only). Nevertheless, as shown below, this term does not destroy the construction of the IM since it has very special structure. Thus, in the scalar case and periodic boundary conditions, the answer on the question about the existence of IMs is also positive, namely, the following theorem can be considered as one of two main results of this paper.
\begin{theorem}\label{Th0.main1} Let  the nonlinearities $f,g\in C_0^\infty(\R)$. Then
the RDA equation
\begin{equation}\label{0.per}
\Dt u-\Dx u+u+f(u)\partial_x u+g(u)=0, \ \ u\big|_{x=-\pi}=u\big|_{x=\pi},\ \ \partial_x u\big|_{x=-\pi}=\partial_x u\big|_{x=\pi}
\end{equation}
possesses an IM (after the proper cut-off procedure).
\end{theorem}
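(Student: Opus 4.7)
The plan is to extend the non-local change of variables $u = a(t,x)w$ from \cite{KZI} to the periodic setting via the modified ODE \eqref{0.right-per}. First I would verify that the mean-subtraction forces $a$ to be $2\pi$-periodic: since $f(P_Ku) - \<f(P_Ku)\>$ has zero spatial mean, integrating the logarithmic derivative of $a$ over $(-\pi,\pi)$ yields $a(\pi) = a(-\pi) = \operatorname{Id}$, so the transform preserves the periodic boundary conditions. A direct computation as in \eqref{0.trans} then produces
\[
\Dt w + Aw = \mathcal{F}_1(w) + \mathcal{F}_2(w) + \mathcal{F}_3(w),
\]
where $\mathcal{F}_1$ is arbitrarily small in the $H^1 \to L^2$ operator norm once $K$ is large enough, $\mathcal{F}_2$ is smoothifying ($H^1 \to H^1$) thanks to the regularizing projector $P_K$ inside $a$, and the new term $\mathcal{F}_3(w) := \<f(P_Ku)\>\,\partial_x w$ appears.

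Next I would adapt the diffeomorphism argument and the cut-off procedure of \cite{KZI} essentially verbatim: for $K$ large the map $u \leftrightarrow w$ is a $C^1$-diffeomorphism in a neighbourhood of the global attractor, and after an appropriate cut-off the nonlinearities $\mathcal{F}_1,\mathcal{F}_2,\mathcal{F}_3$ become globally Lipschitz with controlled constants.

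The \emph{main obstacle} is $\mathcal{F}_3$: it is neither small nor smoothifying ($H^1\to L^2$ only). The key observation is that in the scalar case $m=1$ the coefficient $c(t) := \<f(P_Ku(t))\>$ is a \emph{scalar}, so the operator $c(t)\partial_x$ (i) is skew-adjoint in $L^2$, (ii) commutes with $A=-\Dx+1$ and with the spectral projectors onto any complete block of Fourier eigenmodes, and (iii) for a frozen coefficient $c(\cdot)$ the linear evolution $\Dt w + Aw + c(t)\partial_x w = 0$ factors as $w(t) = e^{-tA}\,\tau_{\sigma(t)}w(0)$, where $\tau_\sigma$ is the spatial translation by $\sigma(t) := -\int_0^t c(s)\,ds$; both factors are $L^2$-isometries. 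Consequently, if $N$ is chosen so that $P_N$ projects onto a full block of Fourier modes $\{e^{inx}\colon |n|\le n_0\}$, the usual exponential dichotomy estimates for $e^{-tA}$ between $P_NH$ and $Q_NH$ transfer to the perturbed evolution \emph{without loss}, and the spectral gap ratio $(\lambda_{N+1}-\lambda_N)/(\lambda_{N+1}^{1/2}+\lambda_N^{1/2})$ remains of order one.

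Finally I would set up a Lyapunov--Perron fixed point scheme on a weighted Banach space of trajectories in $H^1$, treating $c(t) = c[w](t)$ as Lipschitz in the trajectory. Given a reference trajectory $\bar w$ producing $\bar c(t)$, the inhomogeneous linear equation with the skew-adjoint drift $\bar c(t)\partial_x$ is solved explicitly via the translation/semigroup factorization above, yielding exactly the same operator estimates used in \cite{KZI} for the $\mathcal{F}_1 + \mathcal{F}_2$ contributions. The Lipschitz dependence of $c$ on $w$ gives an additional term that, thanks to the smoothing by $P_K$, is controlled by a multiple of the difference of trajectories in $L^2$ and can be absorbed by further increasing $K$, producing the required contraction and hence a Lipschitz inertial manifold of the claimed dimension.
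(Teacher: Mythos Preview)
Your setup through the transformed equation is correct and matches the paper: the mean-subtracted ODE \eqref{0.right-per} does make $a$ periodic, the diffeomorphism works for $K$ large, and one arrives at the three pieces $\mathcal F_1,\mathcal F_2,\mathcal F_3$ with exactly the properties you state. Your structural observations about $\mathcal F_3$ are also right: $c(t)\partial_x$ is skew-adjoint, commutes with $A$ and with $P_N$ on full Fourier blocks, and the frozen-coefficient linear flow factors as heat semigroup composed with a translation.

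The gap is in the last paragraph. You claim that the Lipschitz dependence of $c=\Theta$ on $w$ produces an extra term that ``can be absorbed by further increasing $K$''. This is not so: the paper's own estimate (Theorem~\ref{T_F_1,F_2}) gives $\|\Theta\|_{C^1(\Phi,\R)}\le C$ with $C$ \emph{independent of} $K$, and one checks directly that this bound does not improve as $K\to\infty$ (the projector $P_K$ inside $\Theta$ tends to the identity). Hence in your Perron iteration the difference term $(c[w_1]-c[w_2])\,\partial_x w$ has $L^2$-norm of order $\|w_1-w_2\|_{H^1}\|w\|_{H^1}$, and after the $L^2\to H^1$ Perron integral (whose operator norm is only $O(1)$ at the critical gap) it contributes an $O(1)$ multiple of $\|w_1-w_2\|_{H^1}$. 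No choice of $K$ or $N$ makes this a contraction. The paper says exactly this in Remark~\ref{Rem2.per}: the term $\Theta(w)\partial_x w$ ``prevents us from using the standard Perron method''.

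The paper's way around this is genuinely different from what you propose. It abandons Perron and verifies instead a \emph{strong cone property in differential form} for the equation of variations. There the drift piece $\Theta(w)\partial_x\xi$ contributes $(\Theta(w)\partial_x\xi,\,AQ_N\xi-AP_N\xi)=0$ by integration by parts, so your skew-adjointness observation is used, but pointwise in the quadratic form rather than through a semigroup factorization. The remaining bad term is $(\Theta'(w),\xi)_{H^1}\,\partial_x w$, and this is controlled by introducing a second, Mallet--Paret-type cut-off $T(w)=\phi(\|(\Dx-1)P_Nw\|^2)(\Dx-1)P_Nw$ and splitting into two cases: when $\|P_Nw\|_{H^2}\le 2R$ one has an a priori $H^{7/4}$ bound on $w$ (using the invariant set $\Bbb B_{1/4}$), and when $\|P_Nw\|_{H^2}>2R$ the derivative $T'(w)$ supplies an extra $-\tfrac12\|P_N\xi\|_{H^2}^2$ that dominates. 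Your factorization idea alone does not produce either mechanism, and without one of them the contraction step does not close.
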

Note that the proof of this theorem differs essentially from the one given in the first part of our study for the case of Dirichlet boundary conditions. In particular, we have to use a special cut-off procedure similar to the one developed in \cite{mal-par} (see also \cites{kwean,KZ1,Zel2}) for the so-called spatial averaging method as well as the graph transform and invariant cones instead of the Perron method. The extra term $u$ is added only in order to have dissipativity and the global attractor in the periodic case as well and is not essential for IMs.
\par
We now turn to the vector case $m>1$ with periodic boundary conditions. In contrast to all previously mentioned cases, the answer on the question about the existence of IMs here is {\it negative}. Namely, the following theorem can be considered as the second main result of the present paper.
\begin{theorem}\label{Th0.main2} There exist $m>1$ and the nonlinearities $f\in C_0^\infty(\R^m,\mathcal L(\R^m,\R^m))$ and $g\in C_0^\infty(\R^m,\R^m)$ such that the associated RDA system \eqref{0.main} with periodic boundary conditions does not possess any finite-dimensional IM containing the global attractor. Moreover, the associated limit dynamics on the attractor is infinite-dimensional and, in particular, contains limit cycles with supra exponential rate of attraction.
\end{theorem}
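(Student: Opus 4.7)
My plan is to exploit the fact that periodic boundary conditions give \eqref{0.main} a continuous spatial-translation symmetry which, combined with a carefully designed internal $U(1)$-symmetry in the target $\R^m$, produces relative equilibria (rotating-wave solutions) whose linearisations have spectrum going to $-\infty$. I take $m=2$, identify $\R^2\simeq\mathbb{C}$, and look for nonlinearities of the form
\begin{equation*}
f(u)\,=\,\phi(|u|^2)\,I+\psi(|u|^2)\,J,\qquad g(u)\,=\,G(|u|^2)\,u+H(|u|^2)\,J u,
\end{equation*}
where $J$ is rotation by $\pi/2$ and $\phi,\psi,G,H\in C_0^\infty(\R)$ are chosen below. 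Each such $f,g$ commutes with the $SO(2)$-action $u\mapsto e^{i\theta}u$, so \eqref{0.main} becomes equivariant under the product group $U(1)\times\R$ of phase rotation and spatial translation.

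Substituting the ansatz $u(t,x)=r_0\,e^{i(kx+\Omega t)}$ into \eqref{0.main} and separating real and imaginary parts yields two scalar equations in $(r_0,\Omega)$ which, for an appropriate choice of the profiles $\phi,\psi,G,H$, admit a solution $(r_k,\Omega_k)$ with $r_k>0$ for each $k$ in a large (possibly finite at this stage, later enlarged by scaling the supports) set $\KK\subset\mathbb{Z}$. The corresponding relative equilibrium $u_k$ has $U(1)\times\R$-orbit a topological circle $\gamma_k$ in phase space. To analyse its transverse spectrum I pass to the co-rotating frame $\tilde w=e^{-i(kx+\Omega_k t)}w$ for a perturbation $w$; the linearised equation for $\tilde w$ is translation invariant and $U(1)$-covariant, so its Fourier expansion $\tilde w=\sum_n \tilde w_n e^{inx}$ decouples it into independent $2\times 2$ blocks coupling $\tilde w_n$ with $\overline{\tilde w_{-n}}$, and a direct computation produces eigenvalues of the form
\begin{equation*}
\mu^{\pm}_n\,=\,-n^2+\beta_k\pm\sqrt{\beta_k^2+\alpha_k^2\,n^2}\,=\,-n^2\pm|\alpha_k|\,n+O(1)\qquad (n\to\infty),
\end{equation*}
with $\alpha_k,\beta_k$ explicit functions of $k,r_k,\psi'(r_k^2)$. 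Hence the transverse contraction rates along $\gamma_k$ run off to $-\infty$, which is precisely the announced supra-exponential rate of attraction.

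The third step converts this spectral picture into the non-existence of a finite-dimensional IM. Assume, for contradiction, that $\M$ is a Lipschitz finite-dimensional invariant manifold containing the global attractor, and hence every cycle $\gamma_k$. At a point $p$ of $\gamma_k$ the tangent space $T_p\M$ is a finite-dimensional subspace of $L^2(-\pi,\pi;\R^2)$ invariant under the linearised semiflow, and it can only carry finitely many of the eigenvalues $\mu^{\pm}_n$; since the remaining transverse spectrum is unbounded below, infinitely many spectral directions normal to $\M$ contract at rates larger than any a priori bound. Combined with the exponential-tracking property and the standard quantitative Lyapunov/spectral-projection estimates underlying the IM theory (cf.\ \cite{KZI} and the references therein), this forces the Lipschitz constant of the IM-graph along $\gamma_k$ to blow up, contradicting the $C^{0,1}$-regularity of $\M$. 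The coexistence of increasingly many cycles $\{\gamma_k\}$ with the same mechanism then exhibits an orbit structure on the attractor of unbounded transverse dimensionality, which cannot be modeled by any finite-dimensional smooth flow; hence the limit dynamics on the attractor is genuinely infinite-dimensional.

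The most delicate point is Step~3: turning the heuristic ``$\mu^{\pm}_n\to-\infty$ prevents finite-dimensional IMs'' into a rigorous quantitative contradiction, since, paradoxically, supra-exponential contraction is actually \emph{favourable} for classical normal hyperbolicity, and the obstruction must instead be extracted from the finite-dimensional Lipschitz structure of the IM-graph itself. Two secondary obstacles complete the picture: (i)~one must design the compactly supported profiles $\phi,\psi,G,H$ so that sufficiently many (ideally $k$-unbounded) rotating waves $\gamma_k$ are actually captured by the global attractor while the dissipativity built into \eqref{0.main} is preserved, and (ii)~the linear spectral stability of each $\gamma_k$ has to be promoted to genuine nonlinear (global) attraction, for which a Lyapunov argument in the amplitude variable $|u|^2$, exploiting the $U(1)$-equivariance, is the natural tool.
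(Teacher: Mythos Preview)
Your proposal has a genuine gap at its core: the rotating waves you construct are \emph{not} supra-exponentially attracting, and the spectral picture you obtain does not obstruct the existence of an IM.

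The eigenvalues $\mu_n^{\pm}=-n^2\pm|\alpha_k|n+O(1)$ that you compute for the linearisation around $\gamma_k$ are the ordinary parabolic Floquet spectrum: a discrete set of finite eigenvalues tending to $-\infty$. Every equilibrium or periodic orbit of a semilinear parabolic equation has exactly this feature, and it is perfectly compatible with the existence of an IM. A generic perturbation of $\gamma_k$ has a nonzero component along the least stable transverse direction and therefore decays at the finite exponential rate $\max_n\mu_n^{-}$, not supra-exponentially. You in fact acknowledge this yourself (``supra-exponential contraction is actually favourable for classical normal hyperbolicity''), and your Step~3 never produces a mechanism that would force the Lipschitz constant of an IM graph to blow up; the heuristic ``infinitely many contracting directions normal to $\M$'' applies to every parabolic PDE and cannot be the obstruction.

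What the paper does is fundamentally different. It builds (Theorem~\ref{TH2}) a \emph{linear} time-periodic RDA system whose period map $P$ is a Volterra-type operator with $\sigma(P)=\{0\}$: the point spectrum is \emph{empty}, there are no Floquet exponents at all, and \emph{every} solution decays like $e^{-\gamma t^3}$. The trick is to arrange, via the terms $2i\partial_x v$ and $e^{\pm ix}$, that $P$ acts on the Fourier basis as a weighted shift $P e_n^v=\mu_n e_{n+1}^v$, $P e_n^u=\nu_n e_{n-1}^u$. This linear example is then embedded (Theorem~\ref{Th4.main}) into an autonomous system of $m=8$ real RDA equations by adjoining auxiliary Ginzburg--Landau equations whose explicit solutions $e^{i\pi t/T}$ and $e^{ix}$ manufacture the time- and space-periodic coefficients. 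The non-existence of an IM then follows from the Romanov criterion (Propositions~\ref{Prop3.gr}--\ref{Prop3.emb}): two trajectories on the attractor approaching each other like $e^{-\gamma t^3}$ preclude a backward-in-time Lipschitz extension of the flow on $\mathcal A$, hence preclude any embedding of $\mathcal A$ into a finite-dimensional Lipschitz submanifold. Your construction never leaves the regime of a nonempty, well-ordered Floquet spectrum and so cannot reach this conclusion.
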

As in the case of abstract parabolic equations, see \cites{EKZ,Zel2}, the proof of this result is based on the proper counterexample to the Floquet theory for linear equations with time-periodic coefficients. Such counterexamples are well-known and can be relatively easily constructed in the class of {\it abstract} parabolic equations, see \cite{kuch} for the details. However, to the best of our knowledge, finding such counterexamples in the class of parabolic PDEs and local differential operators was also a long standing open problem. In the present paper, we give a solution of this problem. Namely, we have found smooth space-time periodic functions $f(t,x)\in \mathcal L(\R^m,\R^m)$ and $g(t,x)\in\R^m$ such that the period map $U$ associated with the linear RDA system
\begin{equation}\label{0.floq}
\Dt u-\partial_x^2u+u+f(t,x)\partial_x u+g(t,x)u=0
\end{equation}
is a Volterra type operator such that its spectrum coincides with $\{0\}$. As a result, all solutions of problem \eqref{0.floq} decay {\it faster} than exponentially as $t\to\infty$ (actually, the decay rate is like $e^{-\kappa t^3}$ for some positive $\kappa$).
\par
Note also that, as shown in \cite{rom3}, the IM may not exist even in the scalar case of RDA equation and periodic boundary conditions if we allow the nonlinearities to contain the non-local terms like periodic Hilbert operators.
\par
The paper is organized as follows.
\par
We first study the scalar case.
Section \ref{s1} is devoted to  the properties of solutions of \eqref{0.right-per} and the associated diffeomorphisms of the phase space. In Section \ref{s2} we deduce the transformed equations and verify the basic properties of the transformed nonlinearities which are crucial for the inertial manifold theory, and the construction of the IM for the scalar case is given in Section \ref{s3} based on a special cut-off procedure in the spirit of \cite{mal-par} and invariant cones.
\par
The example of a system of eight RDA equations with periodic boundary conditions which does not possess any finite-dimensional inertial manifold is given in Section \ref{s4}. Finally, some generalizations of the obtained results are considered in Section \ref{s5}.

\section{Scalar case: an auxiliary diffeomorphism}\label{s1}

 In this section, we study the nonlinear transformation $u(t,x)=a(t,x)w(t,x)$ mentioned in the introduction. Namely, we define the map $W: H_{per}^1(-\pi,\pi)\to H_{per}^1(-\pi,\pi)$ via the expression $W(u)(x):=[a(u)(x)]^{-1}u(x)$ where the function $a(x)=a(u)(x)$ solves the equation
\begin{equation}\label{1.dir}
\frac{d}{dx}a =\frac12[f(P_Ku)-\<f(P_Ku)\>]a, \ \ a|_{x=-\pi} =1,
\end{equation}
where $K$ is large enough and $P_K$ is the orthoprojector to the first $2K+1$ Fourier modes. We recall that in our scalar  space periodic case, the eigenvalues of the Laplacian $A=-\Dx+1$ are $\lambda_0=1$, $\lambda_{2n-1}=\lambda_{2n}:= n^2+1$ for $n>0$ (for $n>0$ these eigenvalues have multiplicity two with the corresponding eigenfunctions $\sin(nx)$ and $\cos(nx)$). Thus, the orthonormal basis of eigenfunctions coincides with the  basis for the classical Fourier series and, in particular, the orthoprojector $P_K$ has the form:
\begin{multline}
(P_K u)(x)=\frac{a_0}{2}+\sum_{n=1}^Ka_n \cos(nx)+b_n\sin(nx),\\ a_n:=\frac1\pi\int_{-\pi}^\pi u(x)\cos(nx)\,dx,\ b_n:=\frac1\pi\int_{-\pi}^\pi u(x)\sin(nx)\,dx.
\end{multline}
The basic properties of the maps $a(u)$ and $W(u)$ are collected in the following lemmas
\begin{lemma}\label{bau}
For any $u\in H^1_{per}(-\pi,\pi)$ and any $K$ there exists a unique solution $a=a(u)\in C^\infty(\R)$ for problem \eqref{1.dir}. This solution is space periodic with the period $2\pi$ and the following estimate holds:
\begin{equation}
\|a\|_{W^{1,\infty}} + \|a^{-1}\|_{W^{1,\infty}}\le C,
\end{equation}
where constant $C$ is independent of $K$ and $u$. Moreover, the maps $u\to a(u)$ and $u\to a^{-1}(u)$ are $C^\infty$-differentiable as maps from $H^1_{per}(-\pi,\pi)$ to $W^{1,\infty}(-\pi,\pi)$ and the norms of their Frechet derivatives are bounded by constants which are independent of $u$ and $K$. In particular, the following global Lipschitz continuity holds:
\begin{equation}
\|a(u_1) -a(u_2)\|_{W^{1,\infty}} + \|a^{-1}(u_1) - a^{-1}(u_2)\|_{W^{1,\infty}} \le C\|u_1 - u_2\|_{H^1},
\end{equation}
where the constant $C$ is independent of $K$ and $u$.
\end{lemma}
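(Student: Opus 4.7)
The plan is to write $a(u)$ down explicitly via the integrating-factor formula and then analyse its dependence on $u$ as a composition of elementary smooth maps. Because $P_K u$ is a trigonometric polynomial, the coefficient
$$h(u)(x):=\tfrac12\bigl[f(P_Ku)(x)-\<f(P_Ku)\>\bigr]$$
is smooth in $x$, and the scalar linear ODE \eqref{1.dir} has the unique $C^\infty(\R)$ solution
$$a(u)(x)=\exp\!\left(\int_{-\pi}^x h(u)(s)\,ds\right).$$
Periodicity is automatic: $h(u)$ is the difference between a $2\pi$-periodic function and its mean, so $\int_{-\pi}^{\pi}h(u)(s)\,ds=0$, giving $a(u)(\pi)=a(u)(-\pi)=1$; the same identity on each shifted period propagates $2\pi$-periodicity to all of $\R$ (alternatively, by uniqueness, $a(u)(\cdot+2\pi)$ solves the same Cauchy problem). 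The uniform $L^\infty$-bound is immediate from $f\in C_0^\infty$: one has $\|h(u)\|_{L^\infty}\le\|f\|_{L^\infty}$ independently of $K$ and $u$, so both $a(u)$ and $a^{-1}(u)=\exp\!\bigl(-\int_{-\pi}^x h(u)(s)\,ds\bigr)$ are bounded by $e^{2\pi\|f\|_{L^\infty}}$. The $W^{1,\infty}$-bound then follows from the identities $\partial_x a(u)=h(u)a(u)$ and $\partial_x a^{-1}(u)=-h(u)a^{-1}(u)$.

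For the $C^\infty$-differentiability with $K$-independent constants, I would factor $u\mapsto a(u)$ as
$$H^1_{per}(-\pi,\pi)\xrightarrow{P_K} H^1_{per}\hookrightarrow L^\infty\xrightarrow{\tilde f} L^\infty\xrightarrow{\Pi_0} L^\infty\xrightarrow{I} W^{1,\infty}\xrightarrow{\exp} W^{1,\infty},$$
where $\tilde f(v)(x):=\tfrac12 f(v(x))$ is the Nemytskii operator associated with $f$, $\Pi_0 v:=v-\<v\>$ subtracts the mean, and $(Iv)(x):=\int_{-\pi}^x v(s)\,ds$. Each factor is $C^\infty$ with Frechet derivatives of every order bounded on bounded subsets of its domain: for $\tilde f$ this uses $f\in C_0^\infty(\R)$, which makes every $f^{(k)}$ a bounded Lipschitz function and hence $\tilde f$ smooth as a self-map of $L^\infty$ with derivatives controlled by $\|f^{(k)}\|_{L^\infty}$; for $\exp$ it is elementary; and $P_K$, $\Pi_0$, $I$ are linear with operator norms that do not depend on $K$. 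The chain rule then yields $C^\infty$-smoothness of $u\mapsto a(u)$ from $H^1_{per}$ to $W^{1,\infty}$ with $K$-independent bounds on all derivatives, and the same chain (with a sign flip in the exponent) handles $a^{-1}(u)$. The stated global Lipschitz inequality is just the mean-value inequality applied to the first Frechet derivative.

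The only subtle point, and the one on which $K$-independence ultimately rests, is the first arrow of the chain: one must use that $P_K$ is an orthogonal projection in $H^1_{per}$, so $\|P_K u\|_{H^1}\le\|u\|_{H^1}$, and that the one-dimensional Sobolev embedding $H^1\hookrightarrow L^\infty$ has a $K$-free constant. Once this is in place, every remaining estimate in the chain is a statement about $f$, $\exp$, or a fixed linear operator, and is therefore decoupled from the truncation parameter $K$.
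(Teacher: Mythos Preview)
Your proof is correct and follows essentially the same approach as the paper: both arguments hinge on the explicit integrating-factor formula
\[
a(u)(x)=\exp\!\left(\tfrac12\int_{-\pi}^x\bigl[f(P_Ku)(s)-\<f(P_Ku)\>\bigr]\,ds\right)
\]
together with the fact that $f\in C_0^\infty$, which makes all derivatives of $f$ globally bounded. The paper differentiates this formula directly to obtain $a'(u)\theta$ and estimates it by $\|P_K\theta\|_{L^\infty}\le C\|\theta\|_{H^1}$, then remarks that higher derivatives are handled the same way; your factorization through $P_K$, the Nemytskii operator, $\Pi_0$, $I$, and $\exp$ is simply a more systematic bookkeeping of that same computation, with the $K$-independence traced explicitly to the contraction property of $P_K$ in $H^1$ and the fixed Sobolev embedding constant.
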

\begin{proof}
Since equation \eqref{1.dir} is linear we can explicitly solve it and obtain
\begin{equation}\label{k}
a(u)(x) = e^{\frac12\int_{-\pi}^x f((P_Ku)(s)) - \<f(P_K u)\>\,ds}.
\end{equation}
All assertions of the lemma follow then from this explicit formula and the fact that $f\in C_0^\infty(\R)$. Indeed, the Frechet derivative $a'(u)\theta$, $\theta\in H^1_{per}(-\pi,\pi)$ satisfies
$$
a'(u)\theta=\frac12 a(u)\int_{-\pi}^x f'(P_Ku)P_K\theta -
\<f'(P_K u)P_K\theta\>\,ds
$$
and we see that
$$
\|a'(u)\theta\|_{W^{1,\infty}}\le C\|a(u)\|_{W^{1,\infty}}\|P_K\theta\|_{L^\infty}\le C\|\theta\|_{H^1}.
$$
This gives the desired uniform Lipschitz continuity. The higher Frechet derivatives may be estimated
analogously and the lemma is proved.
\end{proof}
\begin{corollary}\label{Cor1.direct} The map $u\to W(u)$ is $C^\infty$-smooth as a map from $H^1_{per}(-\pi,\pi)$ to itself and the norms of its Frechet derivatives are uniformly bounded with respect to $K$ (but depend on $\|u\|_{H^1}$). Moreover, the following estimate holds:
\begin{equation}\label{1.bound}
C^{-1}\|u\|_{H^1}\le \|W(u)\|_{H^1}\le C\|u\|_{H^1},
\end{equation}
where the constant $C>1$ is independent of $K$ and $u$.
\end{corollary}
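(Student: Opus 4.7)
The plan is to obtain this as a direct consequence of Lemma~\ref{bau} together with the fact that $H^1_{per}(-\pi,\pi)$ is a Banach algebra module over $W^{1,\infty}(-\pi,\pi)$. Specifically, the map $W$ is the composition $u\mapsto (a^{-1}(u),u)\mapsto a^{-1}(u)\cdot u$, where the second arrow is just the pointwise multiplication, which is a continuous bilinear map $W^{1,\infty}(-\pi,\pi)\times H^1_{per}(-\pi,\pi)\to H^1_{per}(-\pi,\pi)$ (this is standard: for $b\in W^{1,\infty}$ and $v\in H^1$ one has $\|bv\|_{H^1}\le C(\|b\|_{L^\infty}\|v\|_{H^1}+\|b'\|_{L^\infty}\|v\|_{L^2})$, and periodicity is preserved because $a^{-1}(u)$ is $2\pi$-periodic by Lemma~\ref{bau}).

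For the two-sided bound \eqref{1.bound} I would first invoke the uniform estimate $\|a^{-1}(u)\|_{W^{1,\infty}}\le C$ from Lemma~\ref{bau} and apply the multiplication inequality above to $W(u)=a^{-1}(u)\cdot u$, which immediately yields the upper bound $\|W(u)\|_{H^1}\le C\|u\|_{H^1}$ with $C$ independent of $K$ and $u$. The lower bound is obtained symmetrically by writing $u=a(u)\cdot W(u)$ and using the analogous uniform estimate $\|a(u)\|_{W^{1,\infty}}\le C$.

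For the $C^\infty$-smoothness statement, I would combine the $C^\infty$-smoothness of $u\mapsto a^{-1}(u)$ from $H^1_{per}$ to $W^{1,\infty}$ (Lemma~\ref{bau}) with the smoothness (in fact, real-analyticity) of the bilinear multiplication map, using the chain rule. The Fréchet derivative at $u$ in direction $\theta$ is
\begin{equation}
W'(u)\theta=[a^{-1}(u)]'\theta\cdot u+a^{-1}(u)\cdot\theta,
\end{equation}
and the uniform bound on $[a^{-1}(u)]'$ in $\mathcal L(H^1_{per},W^{1,\infty})$ from Lemma~\ref{bau}, combined with the multiplication inequality, gives
\begin{equation}
\|W'(u)\theta\|_{H^1}\le C(1+\|u\|_{H^1})\|\theta\|_{H^1},
\end{equation}
with $C$ independent of $K$. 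Higher derivatives are treated in the same fashion using the uniform bounds on higher Fréchet derivatives of $a^{-1}(u)$ provided by Lemma~\ref{bau}; the dependence on $\|u\|_{H^1}$ comes only from the factor $u$ appearing in $W(u)=a^{-1}(u)\cdot u$.

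I do not expect any real obstacle here: everything is a routine application of Lemma~\ref{bau} plus the algebra-module property of $H^1_{per}$ over $W^{1,\infty}$. The only point one has to be slightly careful about is checking that periodicity is preserved under multiplication, but this is built into the construction since, by Lemma~\ref{bau}, the auxiliary function $a(u)$ is $2\pi$-periodic.
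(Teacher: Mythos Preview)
Your proposal is correct and follows exactly the same approach as the paper, which simply states that the assertions are immediate corollaries of Lemma~\ref{bau}. You have merely made explicit the routine details (the $W^{1,\infty}$-module structure of $H^1_{per}$ and the chain rule) that the paper leaves to the reader.
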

Indeed, these assertions are immediate corollaries of Lemma \ref{bau}.
\begin{remark} Note that $a(u)$ is a smoothifying operator in the sense that $a(u)\in C^\infty(-\pi,\pi)$ if $u\in H^1(-\pi,\pi)$. However, the smoothifying norms of this operator will depend on $K$. Actually, the $H^2$-norm will be still uniform with respect to $K$ since
$$
\frac{d^2}{dx^2}a=\frac14(f(P_Ku)-\<f(P_Ku)\>)^2a+\frac12 af'(P_Ku)P_K\frac d{dx}u
$$
and $u\in H^1_{per}(-\pi,\pi)$, but the third derivative will contain the term $\frac{d^2}{dx^2}P_Ku$ which is not uniform with respect to $K$. Moreover, as we see from this formula, the $H^2$-norm of $a$ is not globally bounded with respect to $u$ (due to the presence of the linearly growing term $\frac{d}{dx}P_Ku$). That is the reason why we use $W^{1,\infty}$-norm in Lemma \ref{bau}.
\end{remark}
Recall that we want to verify that the map $W$ is a diffeomorphism, so we need to study the inverse map $U:w\to u$. To this end, we need to find the function $a$ if the function $w$ is known. Obviously, this function (if exists) should satisfy the equation
\begin{equation}\label{a}
\frac{d}{dx} a = \frac{1}{2} (f(P_K(aw)) - \<f(P_K(aw))\>)a, \ \ a|_{x=-\pi}=1.
\end{equation}
The rest of this section is devoted to the study of equation \eqref{a}. We start with the solvability problem.
\begin{lemma}\label{Lem1.sol} For any $w\in H^1_{per}(-\pi,\pi)$ and any $K$ (including $K=\infty$ which corresponds to $P_K=\operatorname{Id}$), there exists at least one solution $a\in C^\infty_{per}(-\pi,\pi)$ of equation \eqref{a}. Moreover,
\begin{equation}\label{1.abound}
\|a\|_{W^{1,\infty}}+\|a^{-1}\|_{W^{1,\infty}}\le C,
\end{equation}
where the constant $C$ is independent of $K$ and $w$.
\end{lemma}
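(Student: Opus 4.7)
The mean-subtraction in \eqref{a} is tailor-made to enforce periodicity. Integrating the ODE in multiplicative form, any solution of \eqref{a} must satisfy the representation
\[
a(x) = \exp\left(\tfrac12\int_{-\pi}^x\bigl[f(P_K(aw))(s) - \<f(P_K(aw))\>\bigr]\,ds\right),
\]
and since the integrand has zero mean on $(-\pi,\pi)$, the exponent automatically vanishes at $x=\pi$, so $a(\pi)=1=a(-\pi)$ and periodicity is for free (equality of derivatives at the endpoints follows because the coefficient in \eqref{a} is itself $2\pi$-periodic once $\tilde a$ and $w$ are). Since $f\in C_0^\infty(\R)$ is bounded by some $M$, the exponent stays in $[-\pi M,\pi M]$ regardless of $a$, $w$ and $K$; this yields $\|a\|_{L^\infty}+\|a^{-1}\|_{L^\infty}\le 2e^{\pi M}$ and, feeding this back into \eqref{a}, $\|a'\|_{L^\infty}\le Me^{\pi M}$. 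Hence \eqref{1.abound} holds with a constant depending only on $M=\|f\|_{L^\infty}$.

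To produce such a solution I would run a Schauder fixed-point argument. Define
\[
T(\tilde a)(x):=\exp\left(\tfrac12\int_{-\pi}^x\bigl[f(P_K(\tilde a w))(s) - \<f(P_K(\tilde a w))\>\bigr]\,ds\right)
\]
and let $\mathcal C:=\{b\in C_{per}([-\pi,\pi]):\|b\|_{W^{1,\infty}}\le C_0\}$, with $C_0$ chosen from the a priori bound just derived. Then $T(\mathcal C)\subset\mathcal C$, and $\mathcal C$ is convex, closed and compact in $C_{per}$ by Arzel\`a--Ascoli.

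Continuity of $T$ in the $C_{per}$-topology is immediate: since $w\in H^1\hookrightarrow C([-\pi,\pi])$ in one dimension, uniform convergence $\tilde a_n\to\tilde a$ implies uniform convergence $\tilde a_n w\to\tilde a w$; then $P_K(\tilde a_n w)\to P_K(\tilde a w)$ in $L^2$ (and in every $C^k$ when $K<\infty$), and the Lipschitz continuity of $f$ combined with Cauchy--Schwarz yields uniform convergence of the primitives in the exponent, hence $T(\tilde a_n)\to T(\tilde a)$ in $C_{per}$. Schauder's theorem then delivers a fixed point $a\in\mathcal C$ which solves \eqref{a} and inherits all the bounds in \eqref{1.abound}.

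The $C^\infty$ regularity is a standard bootstrap when $K<\infty$: $P_K(aw)$ is a trigonometric polynomial of degree at most $K$ in $x$, hence $C^\infty$, and \eqref{a} then gives $a\in C^\infty_{per}$. For $K=\infty$ with $w$ only $H^1$ this bootstrap stops after one step and one obtains merely $a\in W^{1,\infty}_{per}$, which still suffices for \eqref{1.abound}. The only delicate point in the argument is that the a priori estimate must be uniform in both $K$ and $w$; this is precisely what the mean-subtraction is designed to deliver, since without it the exponent at $x=\pi$ would depend on $\int_{-\pi}^\pi f(P_K(aw))\,ds$ in a way that cannot be controlled uniformly. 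Note that uniqueness is not claimed and does not hold in general, which is why Schauder (rather than contraction-mapping) is the natural tool here.
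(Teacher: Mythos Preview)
Your argument is correct and is essentially the paper's own proof: the paper makes the substitution $a=e^y$ and applies Schauder to the integral operator $I(y)(x)=\tfrac12\int_{-\pi}^x\bigl(f(P_K(e^yw))-\<f(P_K(e^yw))\>\bigr)\,ds$ on $C[-\pi,\pi]$, which is exactly your map $T$ written in the logarithmic variable. Your remark that for $K=\infty$ and $w$ merely $H^1$ the bootstrap yields only $a\in W^{1,\infty}_{per}$ (not $C^\infty$) is a fair caveat on the statement; it does not affect \eqref{1.abound} or anything used later.
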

\begin{proof} It is convenient to make the change of variable $a=e^y$ and replace equation \eqref{a} by the following one
\begin{equation}\label{1.y}
\frac{d}{dx}y=\frac12(f(P_K(e^yw))-\<f(P_K(e^yw))\>),\ \ y\big|_{x=-\pi}=0
\end{equation}
or, in the equivalent integral form,
\begin{equation}\label{1.y-int}
y(x)=\frac12\int_{-\pi}^x(f(P_K(e^{y(s)}w(s)))-\<f(P_K(e^yw))\>)\,ds:=I(y)(x).
\end{equation}
Note that the condition $y(\pi)=0$ is automatically satisfied, so the function $a$ is automatically $2\pi$ periodic in $x$. Moreover, as not difficult to see, the operator $I$ is continuous and compact as an operator from $C[-\pi,\pi]$ to itself and is globally bounded
$$
\|I(y)\|_{C[-\pi,\pi]}\le C,
$$
where $C$ is independent of $y$ and $K$. Thus, $I$ maps the closed $R$-ball in the space $C[-\pi,\pi]$ to itself if $R\ge C$ and, thanks to the Schauder fixed point theorem, equation \eqref{1.y-int} possesses at least one solution $y\in C[-\pi,\pi]$ belonging to this ball. All other properties stated in the lemma are immediate corollaries of equation \eqref{1.y-int} and the lemma is proved.
\end{proof}
Our next task is to verify the uniqueness of the solution $a$ and its smooth dependence on the function $w$. To this end, we start with the following linear problem which corresponds to the linearization of \eqref{1.y} with $K=\infty$:
\begin{equation}\label{1.lin}
\frac{d}{dx}\xi(x)=\varphi(x)\xi(x)-\<\varphi\xi\>+h(x),\ \ \xi\big|_{x=-\pi}=0.
\end{equation}

\begin{lemma}\label{Lem1.lin} Let $\varphi,h\in L^1(-\pi,\pi)$ and $\<h\>=0$. Then, problem \eqref{1.lin} possesses a unique solution $\xi\in W^{1,1}_{per}(-\pi,\pi)$ and this solution is given by the following expression:
\begin{equation}\label{solynon}
\xi(x)= \int_{-\pi}^x e^{\int_s^x \varphi(\chi)d\chi}(-D + h(s))ds,
\end{equation}
where
\begin{equation}\label{const}
D= \<\varphi\xi\>=  \frac{\int_{-\pi}^\pi h(s)e^{\int_s^\pi\varphi(\chi)d\chi}ds}{\int_{-\pi}^\pi e^{\int_s^\pi \varphi(\chi)d\chi}ds}.
\end{equation}
Moreover, the following estimate holds:
\begin{equation}\label{ae}
\|\xi\|_{W^{1,1}} \le C \|h\|_{L^1},
\end{equation}
where $C=C(\|\varphi\|_{L^1})$ is independent of $h$,
and, therefore, the linear solution operator
$$
\Upsilon=\Upsilon_\varphi : L^1(-\pi,\pi) \to W^{1,1}(-\pi,\pi),\ \  \Upsilon_\varphi h:=\xi
$$
is well-defined.
\end{lemma}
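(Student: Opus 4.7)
The plan is to treat the mean value $D:=\<\varphi\xi\>$ as an auxiliary unknown real parameter. Once $D$ is viewed as a constant, equation \eqref{1.lin} becomes the linear first-order ODE
\begin{equation*}
\frac{d}{dx}\xi=\varphi(x)\xi+\bigl(h(x)-D\bigr),\qquad \xi(-\pi)=0,
\end{equation*}
whose unique absolutely continuous solution is produced by variation of constants, giving exactly formula \eqref{solynon}. The strategy is then: (i) determine $D$ from the periodicity constraint $\xi(\pi)=0$; (ii) check that this value of $D$ is automatically consistent with $D=\<\varphi\xi\>$ by integrating the original equation once; (iii) deduce uniqueness from the same solution formula applied to the homogeneous problem; (iv) read off the estimate \eqref{ae} from \eqref{solynon}--\eqref{const}.

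First I would substitute $x=\pi$ into \eqref{solynon} and impose $\xi(\pi)=0$. This yields a scalar linear equation for $D$ whose coefficient $\int_{-\pi}^\pi e^{\int_s^\pi\varphi(\chi)d\chi}\,ds$ is strictly positive, so $D$ is uniquely determined and equals the right-hand side of \eqref{const}. At this stage $\xi$ is only periodic, not yet known to satisfy $\<\varphi\xi\>=D$. To close this loop, I would integrate the ODE $\xi'=\varphi\xi+h-D$ over $(-\pi,\pi)$: using the now-established periodicity $\xi(\pi)=\xi(-\pi)=0$ and the hypothesis $\<h\>=0$, the identity
\begin{equation*}
0=\int_{-\pi}^\pi\varphi(x)\xi(x)\,dx-2\pi D
\end{equation*}
falls out, which is precisely $\<\varphi\xi\>=D$. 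Thus $\xi$ defined by \eqref{solynon}--\eqref{const} is a genuine solution of \eqref{1.lin}.

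For uniqueness, suppose $\xi_1,\xi_2$ are two solutions; then $\eta:=\xi_1-\xi_2$ satisfies $\eta'=\varphi\eta-\<\varphi\eta\>$, $\eta(-\pi)=0$, and is periodic. Setting $\tilde D:=\<\varphi\eta\>$ and applying the just-derived formula (with $h\equiv 0$), we get $\eta(x)=-\tilde D\int_{-\pi}^x e^{\int_s^x\varphi(\chi)d\chi}\,ds$; periodicity then forces $\tilde D\cdot\int_{-\pi}^\pi e^{\int_s^\pi\varphi d\chi}ds=0$, so $\tilde D=0$ and hence $\eta\equiv 0$.

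Finally, for the estimate \eqref{ae}, the bounds $\bigl|\int_s^x\varphi(\chi)d\chi\bigr|\le\|\varphi\|_{L^1}$ give a uniform two-sided control $e^{-\|\varphi\|_{L^1}}\le e^{\int_s^x\varphi d\chi}\le e^{\|\varphi\|_{L^1}}$. Using the upper bound in the numerator of \eqref{const} and the lower bound in the denominator yields $|D|\le C(\|\varphi\|_{L^1})\|h\|_{L^1}$; then $\|\xi\|_{L^\infty}\le C(\|\varphi\|_{L^1})(\|h\|_{L^1}+|D|)\le C\|h\|_{L^1}$ from \eqref{solynon}, and $\|\xi'\|_{L^1}\le\|\varphi\|_{L^1}\|\xi\|_{L^\infty}+2\pi|D|+\|h\|_{L^1}\le C\|h\|_{L^1}$ directly from the ODE. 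No step is genuinely hard; the only subtle point is the consistency check in (ii), which justifies the otherwise circular definition of $D$ and is the conceptual core of the argument.
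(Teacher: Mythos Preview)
Your proof is correct and follows essentially the same route as the paper: treat $D$ as a parameter, solve the first-order ODE by variation of constants to obtain \eqref{solynon}, and fix $D$ via the periodicity condition $\xi(\pi)=0$. You are simply more careful than the paper, which omits both the consistency check $D=\<\varphi\xi\>$ and the explicit uniqueness argument; these additions are welcome but do not change the approach.
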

\begin{proof} Indeed, denoting $D:=\<\varphi\xi\>$ and solving the linear ODE with the initial data $\xi\big|_{x=-\pi}=0$, we get \eqref{solynon}. The explicit value of $D$ is then computed assuming that $\xi(\pi)=0$ and inserting this value to the left-hand side of \eqref{solynon}. Estimate \eqref{ae} is then an immediate corollary of \eqref{solynon} and the lemma is proved.
\end{proof}
\begin{corollary}\label{Cor1.inf} Let, in addition, $\varphi, h\in L^s(-\pi,\pi)$ for some $1\le s\le\infty$, then
\begin{equation}\label{1.ups-inf}
\|\Upsilon_\varphi\|_{\mathcal L(L^s,W^{1,s})}\le C,
\end{equation}
where the constant $C$ depends on the $L^s$-norm of $\varphi$.
\end{corollary}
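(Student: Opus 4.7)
The plan is to bootstrap from Lemma \ref{Lem1.lin} using the explicit representation \eqref{solynon}--\eqref{const} together with the ODE \eqref{1.lin} itself. Because $(-\pi,\pi)$ is bounded, H\"older's inequality gives $\|\varphi\|_{L^1}\le(2\pi)^{1-1/s}\|\varphi\|_{L^s}$ and similarly for $h$, so every constant that Lemma \ref{Lem1.lin} expresses in terms of $\|\varphi\|_{L^1}$ is automatically controlled by $\|\varphi\|_{L^s}$. The case $s=1$ is already Lemma \ref{Lem1.lin}, so the task is simply to upgrade the integrability exponent.

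First I would extract a uniform $L^\infty$-bound on $\xi$. The exponential weight $e^{\int_s^x\varphi(\chi)\,d\chi}$ appearing in \eqref{solynon} and \eqref{const} is trapped between $e^{-\|\varphi\|_{L^1}}$ and $e^{\|\varphi\|_{L^1}}$, and in particular the denominator in \eqref{const} is bounded below by $2\pi e^{-\|\varphi\|_{L^1}}$, which guarantees that $D$ is well-defined and depends linearly and boundedly on $h$. Plugging these pointwise bounds into \eqref{const} and \eqref{solynon} yields
\begin{equation*}
|D|\le C\|h\|_{L^1},\qquad \|\xi\|_{L^\infty}\le C\|h\|_{L^1}\le C'\|h\|_{L^s},
\end{equation*}
where $C'$ depends only on $\|\varphi\|_{L^s}$. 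In particular $\|\xi\|_{L^s}\le(2\pi)^{1/s}\|\xi\|_{L^\infty}\le C'\|h\|_{L^s}$.

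Second, I would read the estimate for $\partial_x\xi$ directly off the ODE \eqref{1.lin}. Since $\partial_x\xi=\varphi\xi-\langle\varphi\xi\rangle+h$, the triangle inequality combined with H\"older and the $L^\infty$-bound on $\xi$ already obtained gives
\begin{equation*}
\|\partial_x\xi\|_{L^s}\le\|\varphi\|_{L^s}\|\xi\|_{L^\infty}+(2\pi)^{1/s}|\langle\varphi\xi\rangle|+\|h\|_{L^s}\le C(\|\varphi\|_{L^s})\|h\|_{L^s},
\end{equation*}
which together with the $L^s$-bound on $\xi$ produces \eqref{1.ups-inf}. The endpoint $s=\infty$ is handled by the same argument, replacing the factor $(2\pi)^{1/s}$ by $1$.

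No step here is the main obstacle; the argument is essentially a routine bootstrap. The only point worth flagging is the uniform positive lower bound on the denominator of \eqref{const}, but this is automatic from the two-sided exponential control, and everything else is H\"older applied on a fixed bounded interval.
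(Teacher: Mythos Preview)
Your argument is correct and is essentially what the paper means by ``an immediate corollary of \eqref{solynon}'': you extract the $L^\infty$-bound on $\xi$ from the explicit solution formula (with the two-sided exponential control on the kernel and the lower bound on the denominator of $D$), and then read off the $L^s$-bound on $\xi'$ directly from the equation. The paper does not spell out these steps, so your write-up simply supplies the routine details behind its one-line justification.
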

Indeed, this estimate is also an immediate corollary of \eqref{solynon}.
\par
We now ready to study the case $K<\infty$. Namely, let us consider the following equation:
\begin{equation}\label{aK}
\frac d{dx} \xi=\varphi(x)(P_K(\psi\xi))(x)-\<\varphi P_K(\psi\xi)\>+h(x),\ \ \xi\big|_{x=-\pi}=0.
\end{equation}
\begin{lemma}\label{Lem1.linK} Let $\varphi,h\in L^2(-\pi,\pi)$ with $\<h\>=0$ and $\psi\in H^{1}_{per}(-\pi,\pi)$. Then, there exists $K_0=K_0(\|\varphi\|_{L^2}+\|\psi\|_{H^{1}})$ such that, for all $K>K_0$, problem \eqref{aK} is uniquely solvable in the space $H^{1}_{per}(-\pi,\pi)$. Therefore, the linear solution operator
\begin{equation}\label{1.upsK}
\Upsilon^K=\Upsilon^K_{\varphi,\psi}: L^2(-\pi,\pi)\to H^{1}(-\pi,\pi),\ \ \Upsilon_{\varphi,\psi}^Kh:=\xi
\end{equation}
is well defined and possesses the estimate
\begin{equation}\label{1.ups-est}
\|\Upsilon_{\varphi,\psi}^K\|_{\mathcal L(L^2,H^{1})}\le C,
\end{equation}
where the constant $C$ depends on $\|\varphi\|_{L^2}$ and $\|\psi\|_{H^{1}}$, but is independent of $K>K_0$. Moreover, if in addition the assumptions of Corollary \ref{Cor1.inf} are satisfied for some $s>2$ then the analogue of \eqref{1.ups-inf} also holds uniformly with respect to $K>K_0$.
\end{lemma}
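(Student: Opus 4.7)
\smallskip

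\textbf{Plan of proof.} The idea is to view equation \eqref{aK} as a perturbation of the $K=\infty$ equation \eqref{1.lin} with the coefficient $\varphi$ replaced by $\tilde\varphi:=\varphi\psi$. Indeed, adding and subtracting $\varphi\psi\xi$ we rewrite \eqref{aK} in the form
\begin{equation*}
\frac{d}{dx}\xi=(\varphi\psi)\xi-\langle(\varphi\psi)\xi\rangle+h+R_K(\xi),
\qquad R_K(\xi):=\varphi(P_K-\operatorname{Id})(\psi\xi)-\langle\varphi(P_K-\operatorname{Id})(\psi\xi)\rangle.
\end{equation*}
Since $\psi\in H^1_{per}\hookrightarrow L^\infty$ in 1D, we have $\tilde\varphi=\varphi\psi\in L^2$ with $\|\tilde\varphi\|_{L^2}$ controlled by $\|\varphi\|_{L^2}$ and $\|\psi\|_{H^1}$, so Lemma \ref{Lem1.lin} and Corollary \ref{Cor1.inf} give a well-defined operator $\Upsilon_{\tilde\varphi}:L^2\to H^1_{per}$ with norm $\le C(\|\varphi\|_{L^2},\|\psi\|_{H^1})$. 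Applying this operator, \eqref{aK} becomes the fixed-point problem
\begin{equation*}
\xi=\Upsilon_{\tilde\varphi}h+\Upsilon_{\tilde\varphi}R_K(\xi)=:T_K(\xi)
\end{equation*}
on $H^1_{per}(-\pi,\pi)$ (note that $\langle R_K(\xi)\rangle=0$ automatically, so the hypothesis of Lemma \ref{Lem1.lin} is met).

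\smallskip

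\textbf{Contraction.} The heart of the matter is to show that $R_K$ is small as a linear map $H^1_{per}\to L^2$ when $K$ is large. Using the 1D Sobolev embedding $H^1\hookrightarrow L^\infty$, the product $\psi\xi$ lies in $H^1_{per}$ with
$\|\psi\xi\|_{H^1}\le C\|\psi\|_{H^1}\|\xi\|_{H^1}.$
A standard Fourier tail estimate then yields
\begin{equation*}
\|(P_K-\operatorname{Id})(\psi\xi)\|_{L^\infty}\le \sum_{|n|>K}|c_n|\le \Bigl(\sum_{|n|>K}n^{-2}\Bigr)^{1/2}\|\psi\xi\|_{H^1}\le CK^{-1/2}\|\psi\|_{H^1}\|\xi\|_{H^1},
\end{equation*}
so that $\|R_K(\xi)\|_{L^2}\le CK^{-1/2}\|\varphi\|_{L^2}\|\psi\|_{H^1}\|\xi\|_{H^1}$. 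Composing with $\Upsilon_{\tilde\varphi}$ gives
$\|\Upsilon_{\tilde\varphi}R_K(\xi)\|_{H^1}\le C(\|\varphi\|_{L^2},\|\psi\|_{H^1})\,K^{-1/2}\,\|\xi\|_{H^1}.$
Choose $K_0=K_0(\|\varphi\|_{L^2}+\|\psi\|_{H^1})$ so that this constant is $\le 1/2$ for $K>K_0$. Then $T_K$ is a $\tfrac12$-contraction on $H^1_{per}$, the Banach fixed-point theorem gives a unique $\xi$, and the resulting estimate
$\|\xi\|_{H^1}\le 2\|\Upsilon_{\tilde\varphi}h\|_{H^1}\le C\|h\|_{L^2}$
is uniform in $K>K_0$, proving \eqref{1.ups-est}.

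\smallskip

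\textbf{The $L^s$-bound.} For the improvement stated at the end of the lemma, I would bootstrap the $H^1$-solution just produced. If additionally $h,\varphi\in L^s$ with $s>2$, then $\xi\in H^1\hookrightarrow L^\infty$ and hence $\psi\xi\in H^1\hookrightarrow L^\infty$, so $P_K(\psi\xi)\in L^\infty\subset L^s$ and therefore the source term $h+R_K(\xi)$ belongs to $L^s$ with norm controlled by $\|h\|_{L^s}$ and the data. Applying the $L^s\to W^{1,s}$ version of $\Upsilon_{\tilde\varphi}$ from Corollary \ref{Cor1.inf} to the fixed-point identity $\xi=\Upsilon_{\tilde\varphi}(h+R_K(\xi))$ yields the desired uniform bound.

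\smallskip

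\textbf{Main obstacle.} The only delicate point is the smallness of $R_K$: one needs a quantitative rate of convergence $P_K\to\operatorname{Id}$ on $H^1_{per}$ that survives multiplication by an $L^2$ function, and the $H^1\hookrightarrow L^\infty$ embedding (valid only in 1D) together with the Cauchy--Schwarz tail estimate above is exactly what makes this work. Everything else is a routine application of the results already established for the $K=\infty$ case.
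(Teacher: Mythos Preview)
Your proof is correct and follows essentially the same approach as the paper: rewrite \eqref{aK} as a fixed-point equation $\xi=\Upsilon_{\varphi\psi}h+\Upsilon_{\varphi\psi}R_K(\xi)$ with $R_K$ built from $(P_K-\operatorname{Id})(\psi\xi)$, estimate $\|(P_K-\operatorname{Id})(\psi\xi)\|_{L^\infty}\le CK^{-1/2}\|\psi\xi\|_{H^1}$, and invoke the Banach fixed-point theorem. The only cosmetic difference is that the paper obtains the $K^{-1/2}$ decay via the interpolation inequality $\|v\|_{L^\infty}\le C\|v\|_{L^2}^{1/2}\|v\|_{H^1}^{1/2}$ together with $\|(1-P_K)v\|_{L^2}\le K^{-1}\|v\|_{H^1}$, whereas you compute the Fourier tail $\sum_{|n|>K}|c_n|$ directly by Cauchy--Schwarz; the two arguments are equivalent, and your bootstrap for the $L^s$ statement is a bit more explicit than the paper's one-line dismissal.
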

\begin{proof} We rewrite problem \eqref{aK} using operator $\Upsilon$ and the fact that $\psi\xi=P_K(\psi\xi)+(1-P_K)(\psi\xi)$ in the form
\begin{equation}\label{1.good}
\xi=\Upsilon_{\varphi\psi}\(-\varphi(1-P_K)(\psi\xi)+\<\varphi(1-P_K)(\psi\xi)\>\)+\Upsilon_{\varphi\psi} h:=\mathcal R_{K,\varphi,\psi}\xi+\Upsilon_{\varphi\psi} h.
\end{equation}
Then, it is sufficient to verify that the operator $\mathcal R_{K,\varphi,\psi}$ is a contraction in $H^{1}_{per}(-\pi,\pi)$. Indeed, due to the interpolation inequality and the fact that $H^{1}$ is an algebra, we have
\begin{multline}\label{1.small}
\|(1-P_K)(\psi\xi)\|_{L^\infty}\le \\\le C\|(1-P_K)(\psi\xi)\|_{L^2}^{1/2}\|(1-P_K)(\psi\xi)\|_{H^{1}}^{1/2}\le CK^{-1/2}\|\psi\xi\|_{H^{1}}\le CK^{-1/2}\|\xi\|_{H^{1}},
\end{multline}
where the constant $C$ depends on $\|\psi\|_{H^{1}}$, but is independent of $K$.
  Then, due to Corollary~\ref{Cor1.inf},
\begin{multline}\label{1.ei}
\|\mathcal R_{K,\varphi,\psi}\xi\|_{H^{1}}\le C\|\varphi(1-P_K)(\psi\xi)\|_{L^2}\le\\\le C\|\varphi\|_{L^2}\|(1-P_K)(\psi\xi)\|_{L^\infty}\le Q(\|\varphi\|_{L^2}+\|\psi\|_{H^{1}})K^{-1/2}\|\xi\|_{H^{1}},
\end{multline}
where the function $Q$  is independent of $K$. Fixing now
$$
K_0:=4Q(\|\varphi\|_{L^2}+\|\psi\|_{H^{1}})^2,
$$
we see that for all $K>K_0$ the operator $\mathcal R_{K,\varphi,\psi}$ is a contraction with the contraction factor which is less than $1/2$. Therefore, equation \eqref{1.good} is uniquely solvable in $H^{1}_{per}$ and the following estimate holds:
\begin{equation}\label{1.better}
\|\xi\|_{H^{1}}\le 2\|\Upsilon_{\varphi\psi}h\|_{H^{1}}\le C\|h\|_{L^2},
\end{equation}
where the constant $C$ depends on $\|\varphi\|_{L^2}$ and $\|\psi\|_{H^{1}}$, but is independent of $K>K_0$. The remaining statements of the lemma are immediate corollaries of this estimate and the lemma is proved.
\end{proof}
We are finally ready to establish the analogue of Lemma \ref{bau} for the map $w\to a(w)$ defined as a solution of equation \eqref{a}.
\begin{lemma}\label{Lem1.baw} For any $R>0$ there exists $K_0=K_0(R)$ such that for any $w\in H^{1}_{per}(-\pi,\pi)$, $\|w\|_{H^{1}}\le R$ and every $K>K_0$, equation \eqref{a} possesses a unique solution $a=a(w)\in C^\infty_{per}(-\pi,\pi)$. Moreover, the map $w\to a(w)$ is $C^\infty$-differentiable as the map from
$$
B(R,0,H^{1}):=\left\{w\in H^{1}_{per}(-\pi,\pi),\ \|w\|_{H^{1}}<R\right\}
$$
to $W^{1,\infty}_{per}(-\pi,\pi)$ and the norms of its Frechet derivatives depend on $R$, but are independent of the value of the parameter $K>K_0$.
\end{lemma}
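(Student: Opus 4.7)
The plan is to switch to the logarithmic variable $y := \log a$, formulate equation~\eqref{1.y} as an implicit equation $\Phi(y,w)=0$ in appropriate Banach spaces, and invoke Lemma~\ref{Lem1.linK} twice: once to show invertibility of the $y$-derivative $D_y\Phi$, so that the implicit function theorem delivers local uniqueness, $C^\infty$-smoothness, and $K$-uniform derivative bounds, and once again to upgrade local uniqueness to uniqueness on the whole ball $B(R,0,H^1)$.

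First I would introduce
$$
Y := \{y \in H^1_{per}(-\pi,\pi) : y(-\pi) = 0\}, \qquad L^2_0 := \{h \in L^2(-\pi,\pi) : \<h\> = 0\},
$$
and observe that any $H^1$-solution of~\eqref{1.y} automatically lies in $Y$: the right-hand side of~\eqref{1.y} has zero mean by construction, so integrating from $-\pi$ to $\pi$ gives $y(\pi)=y(-\pi)=0$. The map
$$
\Phi(y,w) := y_x - \tfrac12\bigl(f(P_K(e^y w)) - \<f(P_K(e^y w))\>\bigr)
$$
then sends $Y \times H^1_{per}$ into $L^2_0$; since $H^1_{per}$ is a Banach algebra, $\exp$ is entire, $P_K$ is linear, and $f \in C_0^\infty$, $\Phi$ is $C^\infty$-Frechet differentiable with derivatives bounded uniformly on bounded sets and uniformly in $K$.

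Next I would compute, at any point $(y_0,w_0)$,
$$
D_y\Phi(y_0,w_0)\eta = \eta_x - \varphi\, P_K(\psi\eta) + \<\varphi\, P_K(\psi\eta)\>, \quad \varphi := \tfrac12 f'(P_K(e^{y_0}w_0)),\ \psi := e^{y_0}w_0,
$$
which is, up to a sign, exactly the operator of equation~\eqref{aK}. Starting from a solution $y_0 = \log a_0$ given by Lemma~\ref{Lem1.sol}, estimate~\eqref{1.abound} controls $\|\psi\|_{H^1}$ by $C(R)$ and $\|\varphi\|_{L^\infty}$ by $\|f'\|_{L^\infty}$, all independently of $K$; so Lemma~\ref{Lem1.linK} supplies $K_0 = K_0(R)$ and a uniform inverse $\Upsilon^K_{\varphi,\psi} : L^2_0 \to Y$ for $K>K_0$. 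The implicit function theorem then produces a local $C^\infty$ branch $w \mapsto y(w)$ with Frechet derivatives estimated purely in terms of $R$.

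For global uniqueness I would compare two solutions $y_1,y_2 \in Y$ for the same $w$: writing $e^{y_1}-e^{y_2} = \tilde a(x)(y_1-y_2)$ with $\tilde a := \int_0^1 e^{y_2+s(y_1-y_2)}\,ds$ positive and bounded, and using the mean value theorem on $f$, the difference $\eta := y_1-y_2$ solves a homogeneous equation of the form~\eqref{aK} with $h \equiv 0$, whence Lemma~\ref{Lem1.linK} forces $\eta \equiv 0$. Combined with the local branches this yields a unique global map $w \mapsto y(w)$, and I set $a(w) := e^{y(w)}$. The passage from $H^1$-target to $W^{1,\infty}$-target is then immediate: the ODE~\eqref{a} itself gives $a_x \in L^\infty$ and hence~\eqref{1.abound}, and differentiating~\eqref{a} in $w$ repeatedly shows that each Frechet derivative of $a$ solves a linear ODE with $L^\infty$ right-hand side, hence lies in $W^{1,\infty}$ with norms controlled uniformly on $B(R,0,H^1)$ and uniformly in $K>K_0$. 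The main obstacle, I expect, is the bookkeeping around the subspaces $Y$, $L^2_0$ and verifying that the constants in Lemma~\ref{Lem1.linK} really depend on $y_0$ only through $R$, via the a priori bounds of Lemma~\ref{Lem1.sol}.
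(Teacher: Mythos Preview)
Your proposal is correct and follows essentially the same route as the paper: both pass to $y=\log a$, obtain global uniqueness by writing the difference $y_1-y_2$ as a solution of a homogeneous equation of the form~\eqref{aK} and invoking Lemma~\ref{Lem1.linK}, and then control the Fr\'echet derivatives through the same linearized operator. The only cosmetic difference is that you package the smoothness step via the implicit function theorem, whereas the paper writes down the equation for $y'(w)\theta$ explicitly and bounds it directly with Lemma~\ref{Lem1.linK}; the mathematical content, including the way $K_0$ is fixed from the a~priori bounds of Lemma~\ref{Lem1.sol}, is identical.
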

\begin{proof} The existence of the solution $a$ is verified in Lemma \ref{Lem1.sol}. Let us verify the uniqueness. Instead of working with equation \eqref{a}, we will work with the equivalent equation \eqref{1.y}. Indeed, let $y_1$ and $y_2$ be two solutions of this equation which correspond to the same $w\in B(R,0,H^{1})$ and let $\bar y:=y_1-y_2$. Then this function solves
\begin{equation}\label{1.dif}
\frac d{dx}\bar y=\varphi_{y_1,y_2}(x)P_K(\psi_{y_1,y_2}\bar y)-\<\varphi_{y_1,y_2}P_K(\psi_{y_1,y_2}\bar y)\>,
\end{equation}
where
$$
\varphi_{y_1,y_2}:=\frac12\int_0^1f'(P_K(se^{y_1}w+(1-s)e^{y_2}w))\,ds,\ \ \psi_{y_1,y_2}:=w\int_0^1e^{sy_1+(1-s)y_2}\,ds.
$$
Since $f\in C^\infty_0(\R)$ and $y_i$ are uniformly bounded in $W^{1,\infty}$, we have
\begin{equation}\label{1.ybound}
\|\varphi_{y_1,y_2}\|_{L^2}\le C,\ \ \|\psi_{y_1,y_2}\|_{H^{1}}\le C\|w\|_{H^{1}}\le CR
\end{equation}
where the constant $C$ is independent of $K$. Thus, according to Lemma \ref{Lem1.linK}, $\bar y=0$ is a unique solution of \eqref{1.dif} if $K>K_0(R)$ and the uniqueness is proved. Let us now estimate the norm of the Frechet derivative of the map $w\to a(w)$ (the differentiability can be verified in a standard way and we left its proof to the reader). Let $w\in B(R,0,H^{1})$, $\theta\in H^{1}_{per}(-\pi,\pi)$ and $\xi:=y'(w)\theta$. Then, this function solves
\begin{multline}
\frac d{dx}\xi=\frac12 \(f'(P_K(e^yw))P_K(e^yw\xi)-\<f'(P_K(e^yw))P_K(e^yw\xi)\>\)+\\+
\frac12\(f'(P_K(e^yw))P_K(e^y\theta)-\<f'(P_K(e^yw))P_K(e^y\theta)\>\),\ \xi\big|_{x=-\pi}=0.
\end{multline}
This equation has the form of equation \eqref{aK} with
$$
\varphi:=\frac12f'(P_K(e^yw),\ \psi:=e^yw,\ h=\frac12\(f'(P_K(e^yw))P_K(e^y\theta)-\<f'(P_K(e^yw))P_K(e^y\theta)\>\).
$$
Moreover, the functions $\varphi$ and $\psi$ satisfy exactly the same bounds as in \eqref{1.ybound} and, consequently, according to Lemma \ref{Lem1.linK},
$$
\|\xi\|_{W^{1,\infty}}=\|\Upsilon_{\varphi,\psi}^Kh\|_{W^{1,\infty}}\le C\|h\|_{L^\infty}
$$
if $K>K_0(R)$. It remains to note that
$$
\|h\|_{L^\infty}\le C\|P_K(e^y\theta)\|_{L^\infty}\le C\|e^y\theta\|_{H^{1}}\le C\|\theta\|_{H^{1}},
$$
where $C$ is independent of $K$. This gives the following estimate
\begin{equation}
\|y'(w)\|_{\mathcal L(H^{1},W^{1,\infty})}\le C,
\end{equation}
where $C$ is independent of $K$ and the desired uniform bound for the first Frechet derivative is obtained. Higher derivatives can be estimated analogously and the lemma is proved.
\end{proof}
We combine the obtained results in the following theorem.
\begin{theorem}\label{Th1.main} For any $R>0$ there exists $K_0=K_0(R)$ such that
the map $W: H^1(-\pi,\pi)_{per} \to H^1_{per}(-\pi,\pi)$ is $C^{\infty}-$ diffeomorphism between $B(R,0,H^1)$ and $W(B(R,0,H^1)) \subset H^1$ if $K> K_0(R)$. Moreover, the norms of  $W$, $U:=W^{-1}$  and their derivatives are independent of $K$ and the following embeddings hold:
\begin{equation}\label{1.ul}
B(C^{-1}R,0,H^1_{per})\subset W(B(R,0,H^1_{per}))\subset B(CR,0,H^1_{per})
\end{equation}
for some constant $C>1$ which is independent of $K$ and $R$.
\end{theorem}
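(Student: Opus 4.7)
\smallskip
\textbf{Proof plan.} The strategy is to produce an explicit two-sided inverse of $W$ using the map $U(w):=a(w)w$, where $a(w)\in C^\infty_{per}(-\pi,\pi)$ solves equation \eqref{a}, and then read off diffeomorphism properties from Corollary \ref{Cor1.direct} and Lemma \ref{Lem1.baw}. Let $C>1$ be the constant from \eqref{1.bound}. By Lemma \ref{Lem1.baw} there exists $K_0^*=K_0^*(CR)$ such that for all $K>K_0^*$ the equation \eqref{a} has a unique solution $a(w)$ and $w\mapsto a(w)$ is $C^\infty$ from $B(CR,0,H^1_{per})$ to $W^{1,\infty}_{per}$ with $K$-independent derivative bounds. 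We fix $K_0(R):=K_0^*$; all choices below are for $K>K_0(R)$. Since $W(B(R,0,H^1_{per}))\subset B(CR,0,H^1_{per})$ by \eqref{1.bound}, the map $U$ is well-defined and $C^\infty$-smooth on the image of $W$.

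\smallskip
\textbf{Verifying $U\circ W=\mathrm{Id}$ on $B(R,0,H^1_{per})$.} Given $u\in B(R,0,H^1_{per})$, set $w:=W(u)=a(u)^{-1}u$, so that $a(u)w=u$ and hence $P_K(a(u)w)=P_Ku$. Substituting this into \eqref{a} shows that $a(u)$ is a solution of \eqref{a} with right-hand side $w$ and initial data $a|_{x=-\pi}=1$. The uniqueness part of Lemma \ref{Lem1.baw} (applied to the ball $B(CR,0,H^1_{per})\ni w$) then forces $a(w)=a(u)$, and therefore $U(W(u))=a(w)w=a(u)\cdot a(u)^{-1}u=u$.

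\smallskip
\textbf{Verifying $W\circ U=\mathrm{Id}$ on the image.} Given $w\in W(B(R,0,H^1_{per}))$, set $u:=U(w)=a(w)w$. Then $P_Ku=P_K(a(w)w)$, so the defining ODE \eqref{1.dir} for $a(u)$ and the defining ODE \eqref{a} for $a(w)$ become identical linear first-order problems with the same initial condition $a|_{x=-\pi}=1$. Standard uniqueness for linear ODEs gives $a(u)=a(w)$, whence $W(U(w))=a(u)^{-1}u=a(w)^{-1}a(w)w=w$.

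\smallskip
\textbf{Smoothness and the inclusion \eqref{1.ul}.} The $C^\infty$-smoothness of $W$ with $K$-independent derivative bounds is Corollary \ref{Cor1.direct}, while the $C^\infty$-smoothness of $U=a(w)w$ follows from the smoothness of $w\mapsto a(w)$ from Lemma \ref{Lem1.baw} together with the continuity of the bilinear pointwise multiplication $W^{1,\infty}_{per}\times H^1_{per}\to H^1_{per}$; the composition rules then give uniform bounds on all Frechet derivatives of $U$, and the two identities above make $W$ a $C^\infty$-diffeomorphism onto its image. The right inclusion in \eqref{1.ul} is just \eqref{1.bound}. For the left inclusion, note that the same estimates yield $\|U(w)\|_{H^1}\le\|a(w)\|_{W^{1,\infty}}\|w\|_{H^1}\le C\|w\|_{H^1}$, so every $w\in B(C^{-1}R,0,H^1_{per})$ satisfies $u:=U(w)\in B(R,0,H^1_{per})$ and $W(u)=w$.

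\smallskip
\textbf{Main obstacle.} The one nontrivial point is bookkeeping of the $K$-dependence: the uniqueness statement in Lemma \ref{Lem1.baw} is only available on balls of a prescribed radius and requires $K$ large enough, so one must first enlarge the target radius from $R$ to $CR$ using \eqref{1.bound} before invoking uniqueness to conclude $a(w)=a(u)$. Everything else reduces to uniqueness of linear ODEs and the two chain-rule computations.
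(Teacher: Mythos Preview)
Your proof is correct and follows the same approach as the paper, which simply observes that the embeddings come from \eqref{1.bound} while the diffeomorphism property is contained in Corollary~\ref{Cor1.direct} and Lemma~\ref{Lem1.baw}. You have merely made explicit what the paper leaves implicit---namely the two-sided identity $U\circ W=W\circ U=\mathrm{Id}$ via the uniqueness statements, and the careful enlargement of the radius from $R$ to $CR$ before invoking Lemma~\ref{Lem1.baw}---which is a welcome clarification rather than a different argument.
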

Indeed, embeddings follow from inequalities \eqref{1.bound} and the remaining properties are actually proved in Corollary \ref{Cor1.direct} and Lemma \ref{Lem1.baw}.

\section{Scalar case: the transformed equation}\label{s2}

The aim of this section is to make the change $w=W(u)$ of the independent variable $u$ and study the properties of the nonlinearities involved in the transformed equation. Recall that the transform $W(u)$ is a diffeomorphism on  a large ball $B(R,0,H^1_{per})$ only (where $R$ depends on the parameter $K$), so we need to do this transform not in the whole phase space $\Phi:=H^1_{per}(-\pi,\pi)$, but only on the absorbing ball of the corresponding solution semigroup. By this reason, we start our exposition with a theorem which guarantees the well-posedness and dissipativity of the solution semigroup (although in this section we need this result for the scalar equation only, we state below the theorem for the vector case as well). Namely,
let us consider the following RDA system with periodic boundary conditions:
\begin{equation}\label{eq_1D}
\begin{cases}
 \Dt u -\partial^2_x u+u  + f(u)\partial_x u+g(u)= 0,\ x\in(-\pi, \pi),\\
 u|_{t=0}=u_0\in \Phi,
\end{cases}
\end{equation}
where $u(x,t)=(u_1,\cdots,u_m)$ is an unknown vector function, $f$ and $g$ are given nonlinear smooth functions with finite support.
\begin{theorem}\label{dis}
Let the above assumptions hold. Then for any $u_0 \in H^1_{per}(-\pi,\pi)$ there exists a unique solution of equation \eqref{eq_1D}
\begin{equation}
u \in C([0,T], H^1_{per}(-\pi,\pi))\cap L^2([0,T], H^2(-\pi,\pi)),\ \  T>0,
\end{equation}
satisfying $u|_{t=0} = u_0$ and, therefore, the solution semigroup $S(t)$ is well-defined in the phase  space $\Phi$ via
\begin{equation}
S(t): \Phi \to \Phi, \ \ S(t)u_0:=u(t).
\end{equation}
Moreover the following estimates hold for any solution $u(t)$ of problem \eqref{eq_1D}
\par
1. Dissipativity:
\begin{equation}
\|u(t)\|_{\Phi} \le  Ce^{-\gamma t}\|u_0\|_\Phi +C,
\end{equation}
where $\gamma, C$ are some positive constants;
\par
2. Smoothing property:
\begin{equation}\label{2.smo}
\|u(t)\|_{H^2}\le t^{-1/2}Q(\|u(0)\|_{\Phi}) + C_*,
\end{equation}
where the monotone function $Q$ and positive constant $C_*$ are independent of $t>0$.
\end{theorem}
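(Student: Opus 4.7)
\emph{Strategy.} The plan is to apply the standard semilinear parabolic well-posedness machinery, exploiting the fact that $f,g$ and all of their derivatives are uniformly bounded on $\R^m$ since $f,g\in C_0^\infty$. In one space dimension $H^1\hookrightarrow L^\infty$, hence the nonlinear map $N(u):=f(u)\partial_xu+g(u)$ sends $H^1_{per}$ into $L^2$ and is Lipschitz on every ball of $H^1_{per}$. This subcriticality is what drives the whole proof.

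\emph{Local well-posedness and uniqueness.} First I would establish local existence in $C([0,T^*],H^1_{per})\cap L^2([0,T^*],H^2)$ by a contraction argument for the mild formulation
\begin{equation*}
u(t)=e^{-tA}u_0-\int_0^t e^{-(t-s)A}N(u(s))\,ds,\qquad A:=-\Dx+1,
\end{equation*}
using the smoothing bound $\|e^{-tA}\|_{\mathcal L(L^2,H^1)}\le Ct^{-1/2}$ together with the Lipschitz property of $N$ to obtain a strict contraction on a short interval $T^*=T^*(\|u_0\|_{\Phi})$. Uniqueness of continuous solutions follows from testing the equation for the difference $w=u_1-u_2$ with $w$ itself and invoking Gronwall, using the uniform bounds on $f',g'$ together with $\|u_i\|_{L^\infty}\le C\|u_i\|_{H^1}$.

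\emph{Dissipativity.} Next I would close the a priori estimates globally in time. Testing the equation with $u$, the convective contribution is absorbed by $|(f(u)\partial_xu,u)|\le C\|\partial_xu\|\|u\|\le\tfrac12\|\partial_xu\|^2+C\|u\|^2$ and the zero-order term by $|(g(u),u)|\le C+C\|u\|^2$, which combined with the coercive $+\|u\|^2$ coming from the linear $+u$-term yields $L^2$-dissipativity after Gronwall. Testing then with $-\Dx u$, where analogous absorptions give $|(f(u)\partial_xu,\Dx u)|\le\tfrac12\|\Dx u\|^2+C\|\partial_xu\|^2$ and $|(g(u),\Dx u)|\le\tfrac14\|\Dx u\|^2+C$, produces the dissipative $H^1$-estimate, whence global existence by the standard continuation argument.

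\emph{$H^2$-smoothing.} For the smoothing property, the cleanest route is to differentiate the equation in $x$: letting $v:=\partial_xu$, the new unknown satisfies a parabolic equation with the same leading part but a nonlinearity containing the quadratic term $f'(u)v^2$ in addition to $f(u)\partial_xv+g'(u)v$. Testing this equation by $-\Dx v$ and absorbing the nonlinear terms via the one-dimensional Gagliardo--Nirenberg inequality $\|v\|^2_{L^\infty}\le C\|v\|_{L^2}\|\partial_xv\|_{L^2}$ together with the uniform bounds on $f',g'$, one obtains a differential inequality of the form $\tfrac{d}{dt}\|\partial_xv\|^2+\|\Dx v\|^2\le Q(\|u\|_{H^1})$. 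Multiplying by $t$ and integrating, and using the already available $L^2_tH^2_x$-control coming from the $H^1$-energy identity, yields the claimed $t^{-1/2}$-type smoothing to $H^2$. The main technical hurdle of the whole scheme is precisely this absorption of the higher-order nonlinear convective contributions at the $H^2$-level, which nevertheless causes no serious obstacle thanks to the uniform boundedness of all derivatives of $f,g$.
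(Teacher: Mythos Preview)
Your approach is essentially the same as the paper's: energy estimates obtained by testing with $u$, with $-\Dx u$, and (equivalently to your differentiation-in-$x$ trick) with $\partial_x^4 u$, followed by multiplication by $t$ for the smoothing. The paper opts for Galerkin rather than the mild formulation for existence, but this is a matter of taste.

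One genuine point deserves correction, however. Your absorption
\[
|(f(u)\partial_x u,u)|\le C\|\partial_x u\|\,\|u\|\le\tfrac12\|\partial_x u\|^2+C\|u\|^2
\]
does \emph{not} yield $L^2$-dissipativity: the constant $C$ in front of $\|u\|^2$ depends only on $\sup|f|$ and may well exceed $1$, so it cannot be absorbed by the coercive $+\|u\|^2$ coming from the linear term. The paper exploits the compact support of $f$ more sharply: since $f(u)u$ is bounded pointwise, one gets $|(f(u)\partial_x u,u)|\le C\|\partial_x u\|_{L^1}\le C'\|\partial_x u\|\le\tfrac12\|\partial_x u\|^2+C''$, with no quadratic $\|u\|^2$-loss at all (and similarly $|(g(u),u)|\le C$). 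With this fix the $L^2$-step closes and the rest of your scheme goes through; note also that at the $H^2$ level the differential inequality you wrote should read $\tfrac{d}{dt}\|\partial_x v\|^2+\|\Dx v\|^2\le Q(\|u\|_{H^1})(1+\|\partial_x v\|^2)$ rather than just $Q(\|u\|_{H^1})$, but since you already invoke the $L^2_tH^2_x$-control for the Gronwall step this is harmless.
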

\begin{proof} We give below only the schematic derivation of the stated estimates lefting the standard details to the reader.
\par
{\it Step 1. $L^2$-estimate.} Multiplying equation \eqref{eq_1D} by $u$, integrating over $x$ and using that both $f$ and $g$ have finite support, we get
\begin{equation}
\frac12\frac d{dt}\|u\|^2_{L^2}+\|\Nx u\|^2_{L^2}+\|u\|^2_{L^2}\le C_1\|\Nx u\|_{L^2}+C_2
\end{equation}
and after applying the Gronwall inequality, we arrive at
\begin{equation}\label{2.l2dis}
\|u(t)\|^2_{L^2}+\int_t^{t+1}\|\Nx u(s)\|^2_{L^2}\,ds\le C\|u(0)\|^2_{L^2}e^{-\delta t}+C_*
\end{equation}
for some positive $C_*$, $\delta$ and $C$ which are independent of $t$ and $u$.
\par
{\it Step 2. $H^1$-estimate.} Multiplying equation \eqref{eq_1D} by $-\Dx u$, integrating by parts and using again the fact that $f$ and $g$ have finite supports, we get
\begin{multline}
\frac12\frac d{dt}\|\Nx u\|^2_{L^2}+\|\Dx u\|^2_{L^2}+\|\Nx u\|^2_{L^2}\le\\\le C\|\Dx u\|_{L^2}+C_1\|\Nx u\|_{L^2}\|\Dx u\|_{L^2}\le \frac12\|\Dx u\|^2_{L^2}+C(\|\Nx u\|^2_{L^2}+1)
\end{multline}
Applying the Gronwall inequality to this relation and using \eqref{2.l2dis} for estimating the right-hand side, we arrive at
\begin{equation}\label{2.h1dis}
\|u(t)\|^2_{H^1}+\int_t^{t+1}\|u(s)\|^2_{H^2}\,ds\le C\|u(0)\|^2_{H^1}e^{-\delta t}+C_*
\end{equation}
which gives the desired dissipative estimate in $H^1$.
\par
{\it Step 3. Smoothing property.} Multiplying equation \eqref{eq_1D} by $\partial_x^4 u$, integrating by parts, using again that $f$ and $g$ have finite supports and the interpolation inequality $\|v\|_{L^\infty}^2\le C\|v\|_{L^2}\|\Nx v\|_{L^2}$, we get
\begin{multline}\label{2.gronh2}
\frac12\frac d{dt}\|\partial^2_x u\|^2_{L^2}+\|\partial_x^3u\|^2_{L^2}+\|\partial^2_x u\|^2_{L^2}\le\\\le C\|\partial^3_xu\|_{L^2}(\|\partial^2_x u\|_{L^2}+\|\partial_x u\|^2_{L^\infty}+\|\partial_x u\|_{L^2})\le C\|\partial^3_xu\|_{L^2}(\|\Nx u\|_{L^2}+1)(\|\Dx u\|_{L^2}+1)\le\\\le \frac12\|\partial^3_x u\|^2_{L^2}+C(\|\Nx u\|^2_{L^2}+1)(\|\Dx u\|^2_{L^2}+1).
\end{multline}
Applying the Gronwall inequality to this relation and using  \eqref{2.h1dis}, we arrive at the dissipative estimate in $H^2$:
\begin{equation}\label{2.h2dis}
\|u(t)\|_{H^2}^2+\int_t^{t+1}\|u(s)\|^2_{H^3}\,ds\le Q(\|u(0)\|_{H^2})e^{-\gamma t}+C_*.
\end{equation}
for some monotone increasing function $Q$ and positive constants $\gamma$ and $C_*$ which are independent of $t$. Finally, to obtain the smoothing property, we assume that $t\le1$, multiply inequality \eqref{2.gronh2} by $t$ and apply the Gronwall inequality with respect to the function $Y(t):=t\|\Dx u(t)\|^2_{L^2}$. This gives estimate \eqref{2.smo} for $t\le1$. The estimate for $t\ge1$ can be obtained combining estimate \eqref{2.h2dis} with \eqref{2.smo} for $t\le1$.
\par
{\it Step 4. Uniqueness and Lipschitz continuity.} Let $u_1$ and $u_2$ be two solutions of equation \eqref{eq_1D} and let $\bar u=u_1-u_2$. Then, this function solves
\begin{equation}\label{2.dif}
\Dt\bar u-\Dx \bar u+\bar u+[f(u_1)\Nx u_1-f(u_2)\Nx u_2]+[g(u_1)-g(u_2)]=0.
\end{equation}
Using the fact that $H^1$ is an algebra together with estimate \eqref{2.h1dis}, we get
$$
\|f(u_1)\Nx u_1-f(u_2)\Nx u_2\|_{L^2}\le C\|u_1-u_2\|_{H^1},\ \ \|g(u_1)-g(u_2)\|_{L^2}\le C\|u_1-u_2\|_{L^2},
$$
where the constant $C$ depends on the $H^1$-norms of $u_1(0)$ and $u_2(0)$. Multiplying now equation \eqref{2.dif} by $-\Dx \bar u+\bar u$ and using these estimates, we end up after the standard transformations with the following inequality:
$$
\frac d{dt}\|\bar u\|^2_{H^1}+\|\bar u\|^2_{H^2}\le \tilde C\|\bar u\|^2_{H^1}
$$
and, therefore,
\begin{equation}\label{2.ulip}
\|u_1(t)-u_2(t)\|_{H^1}^2\le e^{\tilde C t}\|u_1(0)-u_2(0)\|_{H^1}^2,
\end{equation}
where the constant $\tilde C$ depends on the $H^1$-norms of $u_1$ and $u_2$, but is independent of $t$. Thus, the uniqueness is verified. The existence of a solution can be proved using e.g., the Galerkin approximations, see \cites{bv1,tem} and the theorem is proved.
\end{proof}
The proved theorem guarantees the existence of a global attractor for the solution semigroup $S(t)$. For the convenience of the reader, we recall the definition of the global attractor and state the corresponding result, see \cites{bv1,tem} for more details.

\begin{definition}\label{Def2.attr}
A set $\mathcal{A}$ to be called a global attractor for the solution semigroup $S(t)$ generated by equation \eqref{eq_1D} if it satisfies the following properties:
\par
1. The set $\mathcal{A}$ is  compact  in $\Phi:=H^1_{per}(-\pi,\pi)$;
\par
2. The set $\mathcal{A}$ is invariant with respect to the  semigroup $S(t)$, i.e., $S(t)\mathcal{A} = \mathcal{A}$, $t\ge0$;
\par
3. The set $\mathcal{A}$ is  attracting, i.e., for any bounded set $B\subset \Phi$ and any neighbourhood $\mathcal{O}$ of the attractor $\mathcal{A}$ there exists time $T = T(B,\mathcal{O})$ such that
\begin{equation}
S(t)B \subset \mathcal{O}(\mathcal{A}),\ \ \text{ for all } t\ge T.
\end{equation}
\end{definition}

The next result is a standard corollary of the proved Theorem \ref{dis}
\begin{theorem}\label{Th2.attr}
 Under the assumptions of Theorem \ref{dis} the solution semigroup $S(t)$ of \eqref{eq_1D} possesses a global attractor $\mathcal A$ in the phase space $\Phi=H^1_{per}(-\pi,\pi)$. Moreover this attractor is a bounded set in $H^2_{per}(-\pi,\pi)$.
\end{theorem}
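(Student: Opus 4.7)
The plan is to invoke the standard abstract theorem on the existence of global attractors for semigroups on Banach spaces: if $S(t)$ is continuous on $\Phi$ for each $t \ge 0$ and admits a compact attracting set (equivalently, a bounded absorbing set together with asymptotic compactness), then its $\omega$-limit set of any absorbing ball is the global attractor. All three ingredients — continuity, a bounded absorbing set, and asymptotic compactness — are direct consequences of Theorem \ref{dis}, so the proof reduces to assembling them.

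First I would extract a bounded absorbing set $\mathcal B \subset \Phi$ from the dissipative estimate: since $\|S(t)u_0\|_\Phi \le C e^{-\gamma t}\|u_0\|_\Phi + C$, the closed ball $B(2C,0,H^1_{per})$ is absorbing. Next, the continuity of $S(t): \Phi \to \Phi$ on each finite time interval follows from the Lipschitz bound \eqref{2.ulip}. The only nontrivial remaining ingredient is asymptotic compactness, which I would obtain from the smoothing property \eqref{2.smo}: evaluating at $t = 1$, the set $S(1)\mathcal B$ is bounded in $H^2_{per}(-\pi,\pi)$. Since the embedding $H^2_{per} \hookrightarrow H^1_{per}$ is compact (Rellich--Kondrachov on the torus), $S(1)\mathcal B$ is precompact in $\Phi$, and hence $\overline{S(1)\mathcal B}^{\,\Phi}$ is a compact absorbing set. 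Applying the abstract theorem then yields the global attractor $\mathcal A = \omega(\mathcal B)$, compact and fully invariant in $\Phi$.

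To establish the $H^2$-boundedness of $\mathcal A$, I would use the strict invariance $S(1)\mathcal A = \mathcal A$: for every $u \in \mathcal A$ there exists $u_{-1} \in \mathcal A$ with $u = S(1) u_{-1}$. Since $\mathcal A$ is in particular bounded in $\Phi$, the smoothing estimate \eqref{2.smo} at $t = 1$ gives
\begin{equation}
\|u\|_{H^2} = \|S(1) u_{-1}\|_{H^2} \le Q(\|u_{-1}\|_\Phi) + C_* \le Q(R) + C_*,
\end{equation}
where $R := \sup_{v \in \mathcal A}\|v\|_\Phi < \infty$. Hence $\mathcal A$ is a bounded subset of $H^2_{per}(-\pi,\pi)$, as claimed.

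Since each of the three ingredients is essentially quoted from Theorem \ref{dis}, there is no genuine obstacle here; the only point requiring minor care is verifying that the compact embedding $H^2_{per} \hookrightarrow H^1_{per}$ indeed applies on the periodic domain (immediate, since the periodic Sobolev spaces are closed subspaces of the standard ones on $(-\pi,\pi)$ and the underlying domain is bounded), so the argument is a textbook application of the abstract framework from \cites{bv1,tem}.
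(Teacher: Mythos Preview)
Your proposal is correct and is precisely the standard argument the paper has in mind: the paper gives no explicit proof of Theorem~\ref{Th2.attr} at all, stating only that it is ``a standard corollary of the proved Theorem~\ref{dis}'' and referring to \cites{bv1,tem}. Your write-up simply unpacks this corollary---absorbing ball from dissipativity, continuity from \eqref{2.ulip}, compact absorbing set from the smoothing estimate \eqref{2.smo} plus the compact embedding $H^2_{per}\hookrightarrow H^1_{per}$, and $H^2$-boundedness of $\mathcal A$ from invariance---so there is nothing to add or correct.
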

\begin{remark}\label{Rem2.attr} We recall that, as a rule, the nonlinearities $f$ and $g$ do not have finite support, but satisfy some {\it dissipativity} and {\it growth} restrictions which allow to establish the dissipativity of the corresponding solution semigroup and the existence of a global attractor, say, in the phase space $\Phi$, see e.g., \cite{BPZ} for the case of coupled Burgers equations. After that, since we are interested in the long time behavior of solutions only, we cut off the nonlinearities outside of some neighbourhood of the attractor making them $C_0^\infty$ on the one hand and without changing the global attractor on the other hand. In the present paper, we assume from the very beginning that the cut off procedure is already done and verify the existence of the global attractor for equation \eqref{eq_1D} just for completeness of the exposition.
\end{remark}

Let us fix the radius $R_0$ in such a way that $\mathcal A\subset B(R_0/2,0,\Phi)$ and introduce the set
\begin{equation}\label{2.inva}
\mathcal B:=\cup_{t\ge0}S(t)B(R_0,0,\Phi).
\end{equation}
Then, this set is bounded according to Theorem \ref{dis} and is invariant with respect to the semigroup $S(t)$:
\begin{equation}\label{2.bb}
\mathcal A\subset B(R_0/2,0,\Phi)\subset B(R_0,0,\Phi)\subset \mathcal B\subset B(\bar R,0,\Phi),\ \ S(t)\mathcal B\subset\mathcal B.
\end{equation}
Thus, we are not interested in the solutions starting outside of the set $\mathcal B$ and need to transform our equation on a set $\mathcal B$ only. From now on, we return to the scalar case $m=1$ and apply the transform $w=W(u)$ defined in the previous section. Recall that this transform depends on the parameter $K$. Moreover, according to \eqref{1.ul}, $W(\mathcal B)\subset B(C\bar R,0,\Phi)$ and, for all $K>K_0=K_0(\bar R)$, the inverse map $U=U(w)$ is well-defined and smooth on $B(2C\bar R,0,\Phi)$ (see Theorem \ref{Th1.main}. Then the transformed equation on $W(\mathcal B)$ reads
\begin{equation}\label{1D_fin}
\Dt w - \partial^2_x w+w +\<f(P_K(aw))\>\partial_x w= F_1(w)  + F_2(w),
\end{equation}
where
\begin{equation}\label{2.F1}
F_1(w) = (f(P_K(aw)) - f(aw)) \partial_x w
\end{equation}
and
\begin{equation}\label{2.F2}
F_2(w) = a^{-1}[\partial_x^2 a - \Dt a - f(aw)\partial_x a]w - a^{-1}g(aw).
\end{equation}
To obtain these formulas we just put $u(t,x):=a(t,x)w(t,x)$ in equation \eqref{eq_1D}, see also \eqref{0.trans}. However, in order to complete the transform, we need to express the function $a$ as well as $\Nx a$, $\Dx a$ and $\partial_t a$ in terms of the new variable $w$. Indeed, the map $w\to a(w)$ is defined as a solution of equation \eqref{a} (see Lemma \ref{Lem1.sol}). The derivative $\partial_x a$ is then can be found from equation \eqref{a}:
$$
(\partial_x a)(w)=\frac12(f(P_K(a(w)w))-\<f(P_K(a(w)w))\>)a(w).
$$
Differentiating this equation in $x$ and using it for evaluating $\Nx a$ in the differentiated equation, we get
$$
(\Dx a)(w)=\frac14(f(P_K(a(w)w))-\<f(P_K(a(w)w)\>)^2a(w)+\frac12 a(w)f'(P_K(a(w)w))\frac d{dx}(P_K(a(w)w)
$$
and since $P_K$ is a smoothifying operator, the terms $\partial_x a$ and $\partial_x^2 a$ can be expressed in a smooth way in terms of the map $w\to a(w)$. In particular, they are well defined on the ball $B(2C\bar R,0,\Phi)$ if $K>K_0$ and, due to the presence of derivatives $\frac d{dx} P_K(a(w)w)$, the norms of these operators and their Frechet derivatives depend on $K$.
\par
The term containing $\partial_t a$ is a bit more delicate since $a$ is local in time and we need to use the chain rule in order to find an expression for it. To do this, we first express the value $\partial_t P_Ku$ from equation \eqref{eq_1D}:
$$
P_K\Dt u=\Dx P_K(a(w)w)-P_K(a(w)w)-P_K(f(a(w)w)\partial_x(a(w)w))-P_Kg(a(w)w)
$$
and we see that the right-hand side smoothly expressed in terms of $w\to a(w)$. Therefore, the operator $w\to (P_K\Dt u)(w)$ is well-defined and smooth on the ball $B(2C\bar R,0,\Phi)$. Differentiating then the explicit formula \eqref{k} in time, we get
\begin{multline}\label{kk}
((\Dt a)(w))(x) =\\= a(w)(x)\frac12\int_{-\pi}^x f'(P_K(a(w)(s)w(s)))(P_K\Dt u)(w)(s) - \<f'(P_K(a(w)w))(P_K\Dt u)(w)\>\,ds
\end{multline}
and this shows that the map $w\to (\Dt a)(w)$ is also well-defined and smooth on $B(2C\bar R,\Phi)$. Thus, we have proved the following result.
\begin{lemma}\label{Lem2.F2} Under the above assumptions the map $F_2(w)$ is well-defined and smooth as the map from $B(2C\bar R,0,\Phi)$ to $\Phi$ for all $K\ge K_0$. In particular,
\begin{equation}\label{2.smF2}
\|F_2\|_{C^1(B(2C\bar R,0,\Phi),\Phi)}\le C_K,
\end{equation}
where the constant $C_K$ depends on $K\ge K_0$.
\end{lemma}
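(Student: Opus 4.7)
The proof amounts to a book-keeping of the informal computations preceding the lemma. I would decompose
\[
F_2(w)=a^{-1}(w)\cdot\Psi(w)\cdot w - a^{-1}(w)\,g(a(w)\,w),\qquad \Psi(w):=\partial_x^2a-\Dt a - f(aw)\partial_x a,
\]
and check that each of the factors $a$, $a^{-1}$, $\partial_x a$, $\partial_x^2 a$, $\Dt a$ is a $C^\infty$ map from $B(2C\bar R,0,\Phi)$ into a Banach space that is stable under multiplication with the remaining factors and embeds into $H^1_{per}$. Then $F_2$ is $C^\infty$ (in particular $C^1$) as a composition and product of smooth maps, and the $K$-dependence of the final constant in \eqref{2.smF2} will enter only through the higher derivatives of $a$.

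The smoothness of $w\mapsto a(w)$ and $w\mapsto a^{-1}(w)$ into $W^{1,\infty}_{per}$, uniformly in $K\ge K_0$, is already given by Lemma \ref{Lem1.baw}. Next, from the defining equation \eqref{a},
\[
(\partial_x a)(w)=\tfrac12\bigl[f(P_K(aw))-\<f(P_K(aw))\>\bigr]a(w);
\]
since $P_K$ is bounded from $L^2$ into $H^s_{per}$ for any $s$ (with operator norm polynomial in $K$) and $f\in C_0^\infty$, this realises $\partial_x a$ as a $C^\infty$ map into $H^1_{per}$. Differentiating in $x$ once more,
\[
\partial_x^2 a=\tfrac14\bigl(f(P_K(aw))-\<f(P_K(aw))\>\bigr)^{2} a + \tfrac12\,a\,f'(P_K(aw))\,\partial_x P_K(aw),
\]
and since $\partial_x P_K:L^2\to L^\infty$ is bounded with $K$-dependent norm, this is again $C^\infty$ into $H^1_{per}$, but now with $K$-dependent bounds.

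The main obstacle is $\Dt a$: the map $w\mapsto a(w)$ is defined pointwise in time, so $\Dt a$ cannot be read off directly from $w(t,\cdot)\in\Phi$. I would get around this exactly as hinted in the paragraph before the lemma, namely by using the evolution equation \eqref{eq_1D} with $u=aw$ to substitute
\[
P_K\Dt u=\Dx P_K(aw)-P_K(aw)-P_K\bigl(f(aw)\,\partial_x(aw)\bigr)-P_Kg(aw),
\]
which realises $w\mapsto(P_K\Dt u)(w)$ as a $C^\infty$ map into any $H^s_{per}$ (with $K$-dependent bounds). Differentiating the explicit formula \eqref{k} in $t$ and inserting this expression then gives
\[
(\Dt a)(w)(x)=\tfrac12\,a(w)(x)\!\int_{-\pi}^{x}\!\bigl[f'(P_Ku)\,P_K\Dt u-\<f'(P_Ku)\,P_K\Dt u\>\bigr]\,ds,
\]
a $C^\infty$ map into $W^{1,\infty}_{per}$. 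Combining the four ingredients, $\Psi(w)$ is $C^\infty$ into $H^1_{per}$, and since $H^1$ is an algebra while $a^{-1}(w)\in W^{1,\infty}$, the products $a^{-1}\Psi\,w$ and $a^{-1}g(aw)$ land in $H^1_{per}$. This yields the smoothness of $F_2$ and the bound \eqref{2.smF2}; the constant $C_K$ is finite for each $K\ge K_0$ but blows up as $K\to\infty$ because of the factors $\partial_x P_K$ and the substitution made in $\Dt a$.
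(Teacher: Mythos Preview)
Your proposal is correct and follows essentially the same approach as the paper: the paper's ``proof'' is in fact the discussion immediately preceding the lemma, which computes $\partial_x a$ and $\partial_x^2 a$ from the defining equation \eqref{a}, handles $\partial_t a$ by substituting $P_K\partial_t u$ from \eqref{eq_1D} and then differentiating the explicit formula \eqref{k} in time, and concludes by noting that all ingredients depend smoothly on $w$ through the already-established smoothness of $w\mapsto a(w)$. Your write-up makes the same moves, with the additional (correct) observation that the $K$-dependence enters through the smoothifying bounds on $P_K$.
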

We now turn to the nonlinearity $F_1$. Obviously, it is well-defined and smooth as the map from $B(2C\bar R,0,\Phi)$ to $L^2_{per}(-\pi,\pi)$. Moreover, this map is  {\it small} if $K$ is large and this property is crucial for us.
\begin{lemma}\label{Lem2.F1} Under the above assumptions, the map $F_1(w)$ defined by \eqref{2.F1} is well-defined as a map from $B(2C\bar R,0,\Phi)$ to $L^2_{per}(-\pi,\pi)$ and the following estimate holds:
\begin{equation}\label{2.F1sm}
\|F_1\|_{C^1(B(2C\bar R,0,\Phi),L^2_{per})}\le CK^{-1/2},
\end{equation}
where the constant $C$ is independent of $K\ge K_0$.
\end{lemma}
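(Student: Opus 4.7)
The plan is to exploit that $f(P_K(aw))-f(aw)$ is controlled by the $L^\infty$--size of $(I-P_K)(aw)$, which is small by the same interpolation argument used in \eqref{1.small}. First I would write
$$
f(P_K(aw)) - f(aw) = \left(\int_0^1 f'(sP_K(aw)+(1-s)aw)\,ds\right)(P_K-I)(aw),
$$
so the $L^\infty$--norm of this factor is bounded by $\|f'\|_{L^\infty}\|(I-P_K)(aw)\|_{L^\infty}$. The spectral decomposition of $A$ yields $\|(I-P_K)v\|_{L^2}\le K^{-1}\|v\|_{H^1}$, and by the Gagliardo--Nirenberg inequality $\|v\|_{L^\infty}\le C\|v\|_{L^2}^{1/2}\|v\|_{H^1}^{1/2}$ (1D), so
$$
\|(I-P_K)(aw)\|_{L^\infty}\le C\,K^{-1/2}\|aw\|_{H^1}\le C\,K^{-1/2}\|a\|_{W^{1,\infty}}\|w\|_{H^1},
$$
exactly as in \eqref{1.small}. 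Combining with $\|a\|_{W^{1,\infty}}\le C$ from Lemma \ref{bau} and $\|\partial_x w\|_{L^2}\le\|w\|_{H^1}$ gives
$$
\|F_1(w)\|_{L^2}\le C\,K^{-1/2}\|w\|_{H^1}^2,
$$
which proves the $C^0$-part of \eqref{2.F1sm} on $B(2C\bar R,0,\Phi)$.

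For the $C^1$-part I would differentiate $F_1$ via the product rule to obtain, for $\theta\in H^1_{per}$,
\begin{multline*}
F_1'(w)\theta = \bigl[f'(P_K(aw))P_K(v) - f'(aw)\,v\bigr]\partial_x w\\
+ \bigl[f(P_K(aw)) - f(aw)\bigr]\partial_x\theta,
\qquad v:=(a'(w)\theta)\,w + a\,\theta.
\end{multline*}
The second summand is handled exactly as above, producing the bound $C\,K^{-1/2}\|w\|_{H^1}\|\theta\|_{H^1}$. For the first summand I would use the telescoping identity
$$
f'(P_K(aw))P_K(v) - f'(aw)\,v = f'(P_K(aw))(P_K-I)v + \bigl(f'(P_K(aw))-f'(aw)\bigr)v,
$$
and treat each term by the same scheme: the first piece is small in $L^\infty$ because $\|(I-P_K)v\|_{L^\infty}\le CK^{-1/2}\|v\|_{H^1}$, while the second piece is small because $\|f'(P_K(aw))-f'(aw)\|_{L^\infty}\le CK^{-1/2}\|w\|_{H^1}$ by the same mean-value trick applied to $f'$.

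The only piece that is not a routine estimate is the $H^1$--bound on $v=(a'(w)\theta)w+a\theta$. Here I would invoke Lemma \ref{Lem1.baw}, which provides $\|a'(w)\theta\|_{W^{1,\infty}}\le C\|\theta\|_{H^1}$ uniformly in $K\ge K_0$, together with the algebra property of $H^1_{per}$ and $\|a\|_{W^{1,\infty}}\le C$, to conclude
$$
\|v\|_{H^1}\le \|a'(w)\theta\|_{W^{1,\infty}}\|w\|_{H^1}+\|a\|_{W^{1,\infty}}\|\theta\|_{H^1}\le C(1+\|w\|_{H^1})\|\theta\|_{H^1}.
$$
Multiplying by $\|\partial_x w\|_{L^2}\le \|w\|_{H^1}$ and collecting the $K^{-1/2}$ factors yields the desired bound $\|F_1'(w)\theta\|_{L^2}\le CK^{-1/2}\|\theta\|_{H^1}$ with $C$ independent of $K\ge K_0$, completing the proof. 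The main technical obstacle, as anticipated, is the uniform control of $a'(w)\theta$ — and that is precisely what Lemma \ref{Lem1.baw} was designed to deliver.
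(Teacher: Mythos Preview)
Your proof is correct and follows exactly the approach the paper indicates --- the interpolation estimate \eqref{1.small} applied to $(I-P_K)(aw)$ together with the mean-value representation of $f(P_K(aw))-f(aw)$, and the same telescoping for the Fr\'echet derivative. One minor correction: since here $a=a(w)$ is defined through equation \eqref{a}, the uniform bound $\|a\|_{W^{1,\infty}}\le C$ should be cited from Lemma~\ref{Lem1.sol} (estimate \eqref{1.abound}) rather than Lemma~\ref{bau}, which concerns the map $u\mapsto a(u)$.
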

Indeed, this estimate can be obtained arguing exactly as in \eqref{1.small}, see also \cite{KZI} for more details.
\par
The considered terms $F_1(w)$ and $F_2(w)$ in equation \eqref{1D_fin} are similar to the case of Dirichlet boundary conditions considered in \cite{KZI}. However, the extra term
$$
F_3(w):= \<f(P_K(aw))\>\partial_x w
$$
is specific to the case of periodic boundary conditions and is essentially different. Indeed, as before, we obviously have the smoothness of this term and the estimate
\begin{equation}\label{2.F3}
\|F_3\|_{C^1(B(2C\bar R,0,\Phi),L^2_{per})}\le C,
\end{equation}
where the constant $C$ is independent of $K\ge K_0$, but this constant {\it is not small} as $K\to\infty$, so we cannot treat this term as a perturbation.
\par
We finally note that the nonlinearities $F_i$ are defined not on the whole space $\Phi$, but only on a large ball $B(2C\bar R,0,\Phi)$ which is not convenient for constructing the inertial manifolds. To overcome this problem, we introduce a smooth cut off function $\theta\in C^\infty_0(\R)$ such that
$$
\theta(z)\equiv 1,\ \ |z|^2\le C\bar R \text{ and }\  \theta(z)\equiv0,\ \ |z|^2\ge 2C\bar R
$$
and the modified operators
\begin{equation}\label{2.op}
\mathcal F_i(w):=\theta(\|w\|^2_{H^1})F_i(w),\ \ i=1,2,3.
\end{equation}
Then the operators $\mathcal F_i$ are defined and smooth already in the whole phase space $\Phi$, coincide with $F_i$ on the ball $B(C\bar R,0,\Phi)$ and vanish outside of the ball $B(2C\bar R,0,\Phi)$. Moreover, the operator $\mathcal F_3(w)=\Theta(w)\partial_x w$, where
$$
\Theta(w):=\theta(\|w\|_{H^1}^2)\<f(P_K(a(w)w))\>
$$
is a smooth map from $\Phi$ to $\R$ which also vanishes outside of $B(2C\bar R,0,\Phi)$. Thus the transformed equation now reads
\begin{equation}\label{2.trans}
\Dt w-\Dx w+w+\Theta(w)\Nx w=\mathcal F_1(w)+\mathcal F_2(w).
\end{equation}
Moreover, this equation coincides with \eqref{1D_fin} on the ball $B(C\bar R,0,\Phi)$ and, consequently, the diffeomorphism $W:u\to w$ maps solutions of the initial equation \eqref{eq_1D} from some neighbourhood of the attractor $\mathcal A$ into the solutions of \eqref{2.trans} belonging to some neighbourhood of $W(\mathcal A)$. In particular, the set $W(\mathcal A)$ is an attractor for equation \eqref{2.trans} (maybe local since we do not control the behavior of solutions of \eqref{2.trans} outside of the ball $B(C\bar R,0,\Phi)$ where some new limit trajectories may a priori appear). Thus, from now on we forget about the initial equation \eqref{eq_1D} and will work with the transformed equation \eqref{2.trans} only. For the convenience of the reader, we collect the verified properties of maps $\mathcal F_i$ in the next theorem.

\begin{theorem}\label{T_F_1,F_2}
The operators $\mathcal F_1$, $\mathcal F_2$  and $\Theta$ belong to $C^\infty(\Phi,L^2_{per})$, $C^\infty(\Phi,\Phi)$ and $C^\infty(\Phi,\R)$
respectively and vanish   outside of a big ball $B(2C\bar R,0,\Phi)$. Moreover, the following estimates hold:
\begin{equation}\label{2.key}
\|\mathcal F_1\|_{C^1(\Phi,L^2_{per})}\le CK^{-1/2},\ \ \|\mathcal F_2\|_{C^1(\Phi,\Phi)}\le C_K,\ \ \|\Theta\|_{C^1(\Phi,\R)}\le C,
\end{equation}
where the constant $C$ is independent of $K$ and the constant $C_K$ may depend on $K$.
\end{theorem}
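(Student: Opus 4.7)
The proof reduces to combining the three building blocks already assembled: the $C^1$ bounds on $F_1$ (Lemma~\ref{Lem2.F1}), on $F_2$ (Lemma~\ref{Lem2.F2}), and on the coefficient $\<f(P_K(aw))\>$ of $F_3$ (estimate~\eqref{2.F3}), all stated on the ball $B(2C\bar R,0,\Phi)$, together with the multiplicative cut-off $\theta(\|w\|_{H^1}^2)$ which extends everything to the whole of $\Phi$. First I would observe that $w\mapsto\|w\|_{H^1}^2$ is a smooth (quadratic) map from $\Phi$ to $\R$, hence by the chain rule $w\mapsto\theta(\|w\|_{H^1}^2)$ is $C^\infty$ from $\Phi$ to $\R$. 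Since $\theta\in C_0^\infty(\R)$, this scalar function and all its derivatives are globally bounded, and it vanishes identically outside $B(2C\bar R,0,\Phi)$. Composing with $F_1$, $F_2$, and $\<f(P_K(aw))\>$ (all smooth on the larger ball by the preceding lemmas) gives smooth extensions to all of $\Phi$ of the required regularity, and the supports are contained in $B(2C\bar R,0,\Phi)$ as stated.

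For the $C^1$-estimates I would simply apply the Leibniz rule: for $i=1,2$,
\[
\mathcal F_i'(w) h = 2\theta'(\|w\|_{H^1}^2)(w,h)_{H^1} F_i(w) + \theta(\|w\|_{H^1}^2) F_i'(w) h.
\]
On the support of $\theta$ we have $\|w\|_{H^1}^2\le 2C\bar R$, so the first summand is controlled by the sup-norm of $F_i$ on the ball (which is part of the $C^1$-norm there), while the second is controlled by $\|F_i'\|$ on the same ball. This yields $\|\mathcal F_1\|_{C^1(\Phi,L^2_{per})}\le CK^{-1/2}$ from Lemma~\ref{Lem2.F1} and $\|\mathcal F_2\|_{C^1(\Phi,\Phi)}\le C_K$ from Lemma~\ref{Lem2.F2}.

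The bound on $\Theta$ is slightly more delicate since it must be $K$-\emph{independent}. Here I would use that $f\in C_0^\infty(\R)$ is globally bounded, so $|\<f(P_K(a(w)w))\>|\le\|f\|_{L^\infty}$ uniformly in $K$ and $w$. For the derivative, the chain rule combined with Lemma~\ref{Lem1.baw} (which provides $K$-independent Frechet bounds on $w\mapsto a(w)$ in $W^{1,\infty}$) and the fact that $P_K$ is a contraction on $L^2$ yields a $K$-uniform Lipschitz bound for $w\mapsto\<f(P_K(a(w)w))\>$ on $B(2C\bar R,0,\Phi)$. Multiplying by $\theta(\|w\|_{H^1}^2)$ via the same Leibniz argument as above then gives the last estimate in~\eqref{2.key}. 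The mild subtlety is exactly this $K$-independence for $\Theta$; the rest is a routine bookkeeping exercise, and smoothness beyond $C^1$ is inherited from the $C^\infty$ regularity of $\theta$, the $H^1$-norm functional, and the individual nonlinearities.
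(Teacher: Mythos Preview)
Your proposal is correct and follows essentially the same approach as the paper: the theorem there is stated explicitly as a summary of the properties already verified in Lemmas~\ref{Lem2.F1}, \ref{Lem2.F2} and estimate~\eqref{2.F3}, combined with the cut-off $\theta(\|w\|_{H^1}^2)$ introduced immediately before. Your write-up is in fact more detailed than the paper's, which simply asserts the collection of bounds without spelling out the Leibniz computation or the $K$-independence argument for $\Theta$.
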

\begin{remark}\label{Rem2.per} We see that, in contrast to the case of Dirichlet boundary conditions considered in \cite{KZI}, in the periodic case the transform $W$ does not allow us to make the nonlinearity which contains spatial derivatives small, but makes it small up to the operator $\Theta(w)\Nx w$ only. Although this term has a very simple structure, it prevents us from using the standard Perron method of constructing the inertial manifolds and makes the situation essentially more complicated. Actually, overcoming this difficulty is one of two main results of the paper.
\end{remark}

\section{Scalar case: existence of an Inertial Manifold}\label{s3}

In this section, we will construct the inertial manifold for the transformed equation \eqref{2.trans}. To be more precise, in contrast to the case of Dirichlet boundary conditions, we do not know how to construct the inertial manifold directly for equation \eqref{2.trans} and need to introduce one more cut off function. We first note that arguing exactly as in Theorems \ref{dis} and \ref{Th2.attr}, we may prove that equation \eqref{2.trans} is uniquely solvable for every $w(0)\in\Phi$ and the corresponding solution $w(t)$ satisfies all of the estimates derived in Theorem \ref{dis}. This in turn means that the solution semigroup $S_{tr}(t):\Phi\to\Phi$ is well-defined, dissipative and possesses a global attractor $\mathcal A_{tr}\in H^2_{per}(-\pi,\pi)$. Moreover, according to the analogue of the $H^2$-dissipative estimate \eqref{2.h2dis}, the set
\begin{equation}
\mathcal B_{H^2}:=\cup_{t\in\R_+}S_{tr}(t)B(r,0,H^2_{per})
\end{equation}
will be invariant, bounded in $H^2_{per}$ set which contains the global attractor $\mathcal A_{tr}$:
\begin{equation}
S_{tr}(t)\mathcal B_{H^2}\subset \mathcal B_{H^2},\ \ \mathcal A_{tr}\subset \mathcal B_{H^2},\ \ \|\mathcal B_{H^2}\|_{H^2_{per}}\le \frac R2,
\end{equation}
where $r$ is large enough and $R>r$ is some number depending only on $r$. We also recall that by the construction of the transformed equation \eqref{2.trans} $W(\mathcal A)\subset \mathcal A_{tr}$ (where $\mathcal A$ is the attractor of the initial equation \eqref{eq_1D}) and
\begin{equation}\label{3.equiv}
S_{tr}(t)=W\circ S(t)\circ W^{-1}
\end{equation}
in a neighboorhood of the set $W(\mathcal A)$. Thus, the dynamics generated by  equation \eqref{2.trans} outside of the ball $B(R,0,H^2_{per})$ becomes not essential and we may change it there in order to simplify the construction of the inertial manifold. To this end, we introduce one more cut-off function $\phi\in C^\infty(\R)$ which is monotone decreasing and
\begin{equation}\label{3.catphi}
\phi(z)\equiv 0,\ \ z\le R^2,\ \ \phi(z)\equiv-\frac12,\ \ z\ge(2R)^2
\end{equation}
and one more nonlinear operator
\begin{equation}
T(w):=\phi(\|(\Dx-1)P_Nw\|^2_{L^2_{per}})(\Dx-1)P_Nw,
\end{equation}
where the number $N$ will actually coincide with the dimension of the manifold and will be fixed below. The key properties of this map are collected in the next lemma.
\begin{lemma}\label{T} The map $T$ is a $C^\infty$-smooth map from $\Phi$ to $P_N\Phi$. Moreover, its Frechet derivative $T'(w)$ is globally bounded as a map from $\Phi$ to $\mathcal L(\Phi,\Phi)$ and satisfies the following inequalities:
\begin{equation}
(T'(w)\xi, (\partial_x^2-1) P_N \xi) \le0,
\end{equation}
for all $w\in \Phi$   and
\begin{equation}
(T'(w)\xi, (\partial_x^2-1) P_N \xi) = -\frac12\|P_N \xi\|^2_{H^2}
\end{equation}
for $w\in \Phi$ such that $\|(\Dx-1) P_N w\|^2_{L^2_{per}} \ge 2R$.
\end{lemma}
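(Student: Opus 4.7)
The plan is to introduce the finite-rank linear operator $A_N := (\Dx - 1)P_N$, which is a bounded (indeed, continuous) operator from $\Phi$ to $P_N\Phi$, so that the nonlinear map takes the compact form
\begin{equation}
T(w) = \phi(\|A_N w\|^2_{L^2_{per}})\,A_N w.
\end{equation}
Smoothness as a map $\Phi \to P_N\Phi$ then follows automatically: $\phi \in C^\infty(\R)$, the squared $L^2$-norm is $C^\infty$ on $L^2_{per}$, and $A_N$ is a continuous linear map with finite-dimensional range. Composing these yields $T \in C^\infty(\Phi, P_N\Phi)$.

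Next I would compute the Frechet derivative by the product and chain rules,
\begin{equation}
T'(w)\xi = 2\phi'(\|A_Nw\|^2_{L^2})\,(A_Nw, A_N\xi)_{L^2}\,A_Nw + \phi(\|A_Nw\|^2_{L^2})\,A_N\xi,
\end{equation}
and verify global boundedness of $w\mapsto T'(w)$ in $\mathcal{L}(\Phi,\Phi)$. The second term is bounded because $|\phi|\le 1/2$ and $A_N$ is bounded on $\Phi$. The first term is bounded because $\phi'$ has compact support (by \eqref{3.catphi}, $\phi'(z)$ vanishes outside $[R^2,(2R)^2]$), so on the region where the prefactor $\phi'(\|A_Nw\|^2)$ is nonzero, $\|A_Nw\|_{L^2}$ is at most $2R$, and Cauchy--Schwarz yields the required bound uniformly in $w$.

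The monotonicity inequality is the core point and the key step. Pairing the formula for $T'(w)\xi$ with $(\Dx-1)P_N\xi = A_N\xi$ in $L^2_{per}$ gives
\begin{equation}
(T'(w)\xi, A_N\xi)_{L^2} = 2\phi'(\|A_Nw\|^2_{L^2})\,(A_Nw, A_N\xi)^2_{L^2} + \phi(\|A_Nw\|^2_{L^2})\,\|A_N\xi\|^2_{L^2}.
\end{equation}
Since $\phi$ is monotone decreasing with $\phi(0)=0$ and $\phi\to -1/2$, we have $\phi \le 0$ and $\phi' \le 0$ everywhere. The first summand is a perfect square multiplied by $\phi'\le 0$, hence nonpositive; the second summand has the nonnegative factor $\|A_N\xi\|^2_{L^2}$ multiplied by $\phi\le 0$, hence also nonpositive. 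Thus the pairing is $\le 0$ for every $w$.

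Finally, for the equality case, when $\|A_Nw\|^2_{L^2} \ge (2R)^2$ we are on the flat part of $\phi$, so $\phi'(\|A_Nw\|^2_{L^2}) = 0$ and $\phi(\|A_Nw\|^2_{L^2}) = -1/2$, reducing the identity to $-\frac12\|A_N\xi\|^2_{L^2} = -\frac12\|(\Dx-1)P_N\xi\|^2_{L^2}$. Under the spectral convention $\|u\|^2_{H^2} = \|(-\Dx+1)u\|^2_{L^2}$ associated with $A=-\Dx+1$ (which is the natural choice used throughout the paper), this is exactly $-\frac12\|P_N\xi\|^2_{H^2}$. I do not expect a real obstacle here: the argument is essentially an exercise in the chain rule plus the sign properties of $\phi$; the only subtlety is noting that the compact support of $\phi'$ is what saves the derivative bound, since otherwise the quadratic growth of $\|A_Nw\|_{L^2}\cdot A_Nw$ would be unbounded.
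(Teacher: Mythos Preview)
Your proof is correct and follows essentially the same approach as the paper: compute $T'(w)\xi$ via the chain rule, use the compact support of $\phi'$ for the global boundedness of $T'$, and then exploit $\phi\le 0$, $\phi'\le 0$ for the sign inequality and the flat part $\phi\equiv -\tfrac12$ for the equality case. Your introduction of the shorthand $A_N=(\Dx-1)P_N$ is a convenient notational streamlining, and you correctly note that the threshold in the equality case should be $\|A_Nw\|_{L^2}^2\ge (2R)^2$ to match the definition of $\phi$ (the statement's ``$\ge 2R$'' is a typo in the paper).
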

\begin{proof} Indeed, the Frechet derivative of $T$ reads
\begin{multline}
T'(w)\xi=\phi(\|(\Dx-1)P_Nw\|^2_{L^2_{per}})(\Dx-1)P_N\xi+\\+
2\phi'(\|(\Dx-1)P_Nw\|^2_{L^2_{per}})((\Dx-1)P_N w,(\Dx-1)P_N\xi)(\Dx-1)P_Nw.
\end{multline}
Using the fact that $\phi'(z)=0$ for $z>4R^2$, we see that the derivative $T'(w)$ is uniformly bounded as a map from $\Phi$ to $\mathcal L(\Phi,\Phi)$ and, in particular, the map $w\to T(w)$ is globally Lipschitz as the map from $\Phi$ to $\Phi$. Moreover, since $\phi(z)\le0$ and $\phi'(z)\le0$ for all $z\in \Bbb R$, we have
\begin{multline}
(T'(w)\xi, (\partial_x^2-1) P_N\xi)= 2 \phi'(\|(\Dx-1)P_N w\|^2_{L^2})((\partial_x^2-1) P_N w, (\partial_x^2-1) P_N \xi)^2  + \\+ \phi(\|(\Dx-1)P_N w\|^2_{L^2}) \|(\partial_x^2-1) P_N \xi\|^2_{L^2} \le0.
\end{multline}
For the case $\|(\Dx-1)P_N w\|^2_{L^2} \ge 4R^2$ by definition $T(w) = -\frac{1}{2}(\partial_x^2-1) P_N w$ and consequently
\begin{equation}
(T'(w)\xi,(\partial_x^2-1) P_N\xi ) = -\frac 12\|(\Dx-1)P_N\xi\|^2_{L^2}
\end{equation}
and the lemma is proved.
\end{proof}
Thus, we arrive at the following final equation for the inertial manifold to be constructed:
\begin{equation}\label{1D_fin_fin}
\Dt w- \partial^2_x w+w+\Theta(w)\Nx w = T(w)+\mathcal F_1(w)+\mathcal F_2(w) .
\end{equation}
Note that this equation can be interpreted as a particular case of an abstract semilinear parabolic equation
\begin{equation}\label{3.abs}
\Dt w+Aw=\mathcal F(w)
\end{equation}
in a Hilbert space $\Phi:=H^1_{per}(-\pi,\pi)$, where $A:=1-\Dx$ (is a self-adjoint positive operator in $\Phi$ with compact inverse) and
$$
\mathcal F(w):=\mathcal F_1(w)+\mathcal F_2(w)+T(w)-\Theta(w)\Nx w.
$$
Indeed, as follows from Theorem \ref{T_F_1,F_2} and Lemma \ref{T}, the nonlinearity $\mathcal F$ is globally Lipschitz continuous as the map from $\Phi$ to $L^2_{per}(-\pi,\pi)=D(A^{-1/2})$. This, in particular, implies that this equation is also globally well-posed in $\Phi$, generates a dissipative semigroup $\bar S(t):\Phi\to\Phi$ and the corresponding solution $w(t)$ satisfies all of the estimates stated in Theorem \ref{dis}. Moreover, due to Theorem \ref{T_F_1,F_2} and the obvious fact that $Q_NT(w)=0$, the $Q_N$-component of the nonlinearity $\mathcal F$ is globally bounded:
\begin{equation}\label{3.qbound}
\|Q_N\mathcal F(w)\|_{L^2_{per}}\le C,
\end{equation}
where the constant $C$ is independent of $N$ and $w$. This property gives  the control for the $Q_N$-component of the solution $w$ which is crucial for what follows.
\begin{lemma} Let the nonlinearity $\mathcal F$ satisfy \eqref{3.qbound}. Then, for any $\kappa\in(0,1)$, there exists a constant $R_\kappa>0$ (independent of $N$) such that, for any solution $w(t)$ of equation \eqref{3.abs} with $w(0)\in H^{2-\kappa}_{per}(-\pi,\pi)$, the following estimate holds:
\begin{equation}\label{3.qnest}
\|Q_Nw(t)\|_{H^{2-\kappa}_{per}}\le (\|Q_Nw(0)\|_{H^{2-\kappa}_{per}}-R_\kappa)_+e^{-\alpha t}+R_{\kappa},
\end{equation}
where $z_+:=\max\{z,0\}$ and the positive constant $\alpha$ is independent of $\kappa$, $t$, $N$ and $w$.
\end{lemma}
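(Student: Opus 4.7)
The plan is to prove the estimate by applying $Q_N$ to the equation, then invoking Duhamel's formula for the resulting linear inhomogeneous equation together with a standard parabolic smoothing bound. Since $Q_N$ commutes with $A := 1-\partial_x^2$ (both being spectral objects defined via the Fourier basis), applying $Q_N$ to equation \eqref{3.abs} yields
\begin{equation*}
\partial_t v + A v = g(t), \qquad g(t) := Q_N \mathcal F(w(t)), \qquad v := Q_N w,
\end{equation*}
where $\|g(t)\|_{L^2_{per}}\le C$ uniformly in $N,t$ by \eqref{3.qbound}, and $A$ restricted to the range of $Q_N$ has spectrum in $[\lambda_{N+1},\infty)$ with $\lambda_{N+1}\ge 1$. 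Duhamel gives $v(t)=e^{-At}v(0)+\int_0^t e^{-A(t-s)}g(s)\,ds$, and I will work with the equivalent norm $\|v\|_{H^{2-\kappa}_{per}}^2=\|A^{(2-\kappa)/2}v\|^2_{L^2_{per}}$.

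First I would estimate the homogeneous part: since $A^{(2-\kappa)/2}$ commutes with the semigroup,
\begin{equation*}
\|e^{-At}v(0)\|_{H^{2-\kappa}_{per}}=\|e^{-At}A^{(2-\kappa)/2}v(0)\|_{L^2_{per}}\le e^{-\lambda_{N+1}t}\|v(0)\|_{H^{2-\kappa}_{per}}.
\end{equation*}
For the Duhamel integral I would use the smoothing bound on the range of $Q_N$,
\begin{equation*}
\|A^{(2-\kappa)/2}e^{-A\tau}\|_{L^2\to L^2}=\sup_{\lambda\ge\lambda_{N+1}}\lambda^{(2-\kappa)/2}e^{-\lambda\tau}\le C_\kappa\,\tau^{-(2-\kappa)/2}e^{-\lambda_{N+1}\tau/2},
\end{equation*}
obtained by splitting $e^{-\lambda\tau}=e^{-\lambda\tau/2}e^{-\lambda\tau/2}$, maximizing the first factor over $\lambda>0$, and bounding the second via $\lambda\ge\lambda_{N+1}$. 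Combining with $\|g(s)\|_{L^2_{per}}\le C$ and substituting $\sigma:=\lambda_{N+1}(t-s)/2$, the Duhamel term is majorized by $C_\kappa' C\,\lambda_{N+1}^{-\kappa/2}$, with the integral reducing to $\Gamma(\kappa/2)$, which is finite precisely because $\kappa>0$. Since $\lambda_{N+1}\ge 1$, setting $R_\kappa:=C_\kappa' C$ and $\alpha:=1$ yields the preliminary bound $\|v(t)\|_{H^{2-\kappa}_{per}}\le e^{-\alpha t}\|v(0)\|_{H^{2-\kappa}_{per}}+R_\kappa$, with $R_\kappa,\alpha$ independent of $N$, $t$, $w$.

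To promote this to the absorbing-ball form $(\|v(0)\|_{H^{2-\kappa}_{per}}-R_\kappa)_+e^{-\alpha t}+R_\kappa$, I would argue that (after possibly enlarging $R_\kappa$ by a harmless absolute factor) the ball of radius $R_\kappa$ is positively invariant: once $\|v(t_0)\|_{H^{2-\kappa}_{per}}\le R_\kappa$ at some $t_0$, a shifted application of the preliminary estimate keeps the trajectory inside. Equivalently, one may derive the differential inequality $\tfrac{d}{dt}\|v\|^2_{H^{2-\kappa}_{per}}+2\alpha(\|v\|^2_{H^{2-\kappa}_{per}}-R_\kappa^2)\le 0$ by an analogous splitting of the energy identity and invoke the standard Gronwall comparison.

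The main obstacle lies in the parabolic smoothing integral: the singular factor $\tau^{-(2-\kappa)/2}$ is integrable at $\tau=0$ exactly because $\kappa>0$, while the exponential factor $e^{-\lambda_{N+1}\tau/2}$ — whose rate is uniformly bounded below by $\lambda_1=1$ — controls the tail and furnishes the decisive (and in fact $N$-decaying) constant $\lambda_{N+1}^{-\kappa/2}$ in the absorbing radius. Notably, no structural property of $\mathcal F$ beyond the uniform $L^2$-bound \eqref{3.qbound} on $Q_N\mathcal F(w)$ is used, which identifies that hypothesis as the essential input for the dissipativity of the high modes.
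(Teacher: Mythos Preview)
Your approach is essentially the same as the paper's: apply $Q_N$, use the variation of constants (Duhamel) formula, and invoke the parabolic smoothing estimate $\|A^{(2-\kappa)/2}e^{-A\tau}\|_{L^2\to L^2}\le C\tau^{-1+\kappa/2}e^{-\alpha\tau}$ to bound the convolution integral; the paper then simply declares the absorbing-ball form \eqref{3.qnest} a ``straightforward corollary'' of the resulting inequality \eqref{3.frac}. Your version is in fact a bit more explicit than the paper's on two points --- the derivation of the smoothing bound and the passage from the preliminary estimate $\|Q_Nw(t)\|\le e^{-\alpha t}\|Q_Nw(0)\|+R_\kappa$ to the sharper form with $(\,\cdot\,)_+$ --- and your observation that the absorbing radius actually decays like $\lambda_{N+1}^{-\kappa/2}$ is a correct refinement the paper does not record.
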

\begin{proof} Indeed, according to the variation of constants formula, $Q_Nw(t)$ satisfies
\begin{equation}
Q_Nw(t)=Q_Nw(0)e^{-At}+\int_0^te^{-A(t-s)}Q_N\mathcal F(w(s))\,ds.
\end{equation}
Taking the $H^{2-\kappa}_{per}$-norm to both sides of this equality and using that
$$
\|e^{-A(t-s)}\|_{\mathcal L(L^2_{per},H^{2-\kappa}_{per})}\le Ce^{-\alpha(t-s)}(t-s)^{-1+\kappa/2}
$$
for some positive $C$ and $\alpha$, we end up with the following estimate:
\begin{multline}\label{3.frac}
\|Q_Nw(t)\|_{H^{2-\kappa}_{per}}\le \|Q_Nw(0)\|_{H^{2-\kappa}_{per}}e^{-\alpha t}+\\+C\int_0^te^{-\alpha(t-s)}(t-s)^{-1+\kappa/2}\|Q_N\mathcal F(w(s))\|_{L^2_{per}}\,ds\le\\\le
\|Q_Nw(0)\|_{H^{2-\kappa}_{per}}e^{-\alpha t}+C_1\int_0^t\frac{e^{-\alpha(t-s)}}{(t-s)^{1-\kappa/2}}\,ds
\end{multline}
and the assertion of the lemma is a straightforward corollary of this estimate. Thus, the lemma is proved.
\end{proof}
The proved lemma shows that the sets
\begin{equation}\label{3.bkap}
\Bbb B_\kappa:=\{w\in H^{2-\kappa}_{per},\ \ \|Q_Nw\|_{H^{2-\kappa}_{per}}\le R_\kappa\}
\end{equation}
are invariant with respect to the semigroup $\bar S(t)$:
$$
\bar S(t)\Bbb B_\kappa\subset \Bbb B_{\kappa}.
$$
\begin{remark} The auxiliary operator $T(w)$ has been introduced in \cite{mal-par} in order to construct the inertial manifolds for reaction-diffusion equations in higher dimensions using the so-called spatial averaging method. On the one hand, since $T(w)=-\frac12P_N(\Dx-1) w$ if $\|P_Nw\|_{H^2_{per}}$ is  large,   this term shifts roughly speaking the first $N$-eigenvalues and makes the spectral gap large enough to treat the nonlinearity. So, this trick actually allows to check the cone property for the case where $\|P_Nw\|_{H^2_{per}}\le 2R$. On the other hand, together with the control \eqref{3.qnest}, this gives us the control of $H^{2-\kappa}$-norm in the estimates related with the cone property, see also \cites{KZ1,Zel2} and the proof of Theorem \ref{Th3.main} below.
\par
Mention also that by the construction of the nonlinearity $T$, equation \eqref{1D_fin_fin} coinsides
with \eqref{2.trans} in the neighbourhod of the attractor $\mathcal A_{tr}$.
\end{remark}
We are ready to verify the existence of the inertial manifold for the problem \eqref{1D_fin_fin}. For the convenience of the reader, we first recall the definition of an inertial manifold and the result which guarantees its existence.
\begin{definition}\label{Def3.IM}
A set $\mathcal{M}\in \Phi$ to be called an inertial manifold for  problem \eqref{3.abs} if it satisfies the following properties:
\par
1. $\mathcal{M}$ is strictly invariant under the action of the semigroup $\bar S(t)$, i. e. $\bar S(t)\mathcal{M} = \mathcal{M}$;
\par
2. $\mathcal{M}$ is a Lipschitz submanifold of $\Phi$ which can be presented as a graph of a Lipschitz continuous function $M:P_N \Phi\to Q_N \Phi$ for some $N \in
\Bbb N$, i.e.,
\begin{equation}
\mathcal{M} = \left\{ w_+ + M(w_+), w_+ \in P_N \Phi \right\}\ \  \text{ and }\ \ \ \|M(w^1_+) - M(w_+^2)\|_{\Phi} \le L_M\|w_+^1 - w_+^2\|_{\Phi};
\end{equation}
for some constant $L_M$;
\par
3. $\mathcal{M}$ possesses an exponential tracking property, i.e. for any solution $w(t)$, $t \ge 0$, of problem \eqref{3.abs} there exists a solution $\tilde w(t)$, $t\in\R$, belonging to $\mathcal{M}$ for all $t\in\R$ such that:
\begin{equation}
\| w(t) - \tilde w(t)\|_{\Phi}\le C e^{-\gamma t}\|w(0) - \tilde{w}(0)\|_{\Phi}
\end{equation}
for some positive constants $C$ and $\gamma$.
\end{definition}
The proof of the existence of an inertial manifold will be based on the invariant cone property and the graph transform method, see \cites{FST,mal-par,rom-man,Zel2} for more details. To introduce the invariant cone property convenient for our purposes, we introduce the following quadratic form
\begin{equation}
V(\xi) = \|Q_N \xi\|^2_{\Phi} - \|P_N \xi\|^2_{\Phi},\ \  \|z\|_{\Phi}^2:=(Az,z)=\|\Nx z\|^2_{L^2_{per}}+\|z\|^2_{L^2_{per}}
\end{equation}
and corresponding cone in the phase space $\Phi$:
\begin{equation}
\mathcal K^+ = \bigg\{\xi \in \Phi : V(\xi)\le 0 \bigg\}.
\end{equation}
\begin{definition}
We say that equation \eqref{3.abs} possesses a strong cone property in the differential form if there exist a positive constant $\mu$ and a bounded function $\alpha: \Phi \to \Bbb R$, which satisfies the property:
\begin{equation}
0<\alpha_- \le \alpha(w)\le \alpha_+ <\infty,
\end{equation}
such that for any solution $w(t)\in \Phi$, $t\in [0,T]$, of equation \eqref{3.abs} and any solution $\xi(t)$ of the corresponding equation in variations:
\begin{equation}\label{eq_var}
\Dt \xi +A \xi =  \mathcal F'(w(t))\xi
\end{equation}
the following inequality holds:
\begin{equation}\label{srt_cone}
\frac{d}{dt}V(\xi) + \alpha(w)V(\xi) \le -\mu \|\xi\|^2_{H^2_{per}}.
\end{equation}
If inequality \eqref{srt_cone} holds not for all trajectories $w(t)$, but only for the ones belonging to some invariant set, we will say that the strong cone property is satisfied on this set.
\end{definition}
The next theorem gives the conditions which guarantees the existence of the inertial manifold for the abstract equation \eqref{3.abs} which we need to verify for our case of equation \eqref{1D_fin_fin}.

\begin{theorem}\label{Th3.IM} Let the nonlinearity $\mathcal F$ be globally Lipschitz continuous as a map from $\Phi$ to $L^2_{per}=D(A^{-1/2})$ and let the number $N$ be chosen in such a way that  $Q_N\mathcal F$ is globally bounded on $\Phi$ and the strong  cone property in the differential form is satisfied on the invariant set $\Bbb B_\kappa$ for some $\kappa\in(0,1]$ defined by \eqref{3.bkap}. Then \eqref{3.abs} possesses a $(2N+1)$-dimensional Lipschitz inertial manifold in the space $\Phi$.
\par
Moreover, if the nonlinearity $\mathcal F$ is of class $C^{1+\beta}(\Phi,L^2_{per})$ for some $\beta>0$, then the inertial manifold is of class $C^{1+\eb}$ for some $\eb=\eb(\beta,N)>0$.
\end{theorem}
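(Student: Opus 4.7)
The plan is to realize the manifold as a fixed point of the graph transform induced by $\bar S(\tau)$ for some small $\tau > 0$, with the invariant cone providing both the well-posedness of the transform and its contractivity, along the lines of \cite{mal-par}, \cite{rom-man}, \cite{Zel2}. I would avoid the Lyapunov--Perron scheme here because $\mathcal F$ loses a spatial derivative (it is only Lipschitz as a map $\Phi \to L^2_{per} = D(A^{-1/2})$), so an integral fixed-point in exponentially weighted trajectory spaces does not close directly; the $-\mu\|\xi\|^2_{H^2_{per}}$ term in the strong cone inequality is precisely what compensates this derivative loss after one step of parabolic smoothing.

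The key steps are the following. First, upgrade the cone inequality from the variational form to pairs of genuine solutions: given $w_1, w_2 \in \mathbb B_\kappa$, writing $\xi = w_1 - w_2$ and $\mathcal F(w_1) - \mathcal F(w_2) = L(t)\xi$ with $L(t) := \int_0^1 \mathcal F'(sw_1+(1-s)w_2)\,ds$, the pointwise-in-$s$ cone property averaged yields
\[
\frac{d}{dt}V(\xi) + \alpha_- V(\xi) \le -\mu\|\xi\|^2_{H^2_{per}}.
\]
Second, define the graph transform $\mathcal T_\tau$ on the complete metric space $\mathfrak L$ of Lipschitz maps $M: P_N\Phi \to Q_N\Phi$ with Lipschitz constant at most one (so that pairwise differences of graph points satisfy $V \le 0$, i.e.\ lie in $\mathcal K^+$) and image contained in the $H^{2-\kappa}_{per}$-ball of radius $R_\kappa$ (so that the graph $\Gamma(M) \subset \mathbb B_\kappa$). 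For $p \in P_N\Phi$ one seeks $w_+^0$ with $P_N\bar S(\tau)(w_+^0 + M(w_+^0)) = p$ and sets $(\mathcal T_\tau M)(p) := Q_N\bar S(\tau)(w_+^0 + M(w_+^0))$. Unique solvability in $w_+^0$ is immediate from the strict monotone decay of $V$ along pairwise differences (two distinct graph points cannot have coinciding $P_N$-components at time $\tau$), while existence follows from a continuation/degree argument based on the fact that the analogous linear problem (i.e.\ $\mathcal F \equiv 0$) is invertible and the cone estimate is preserved along the homotopy. Lipschitz-$1$ of the new graph is again the cone property; uniform boundedness of $Q_N\mathcal F$ combined with the parabolic bound $\|A^{(2-\kappa)/2}e^{-A\tau}\|_{\mathcal L(L^2_{per})} \lesssim \tau^{-1+\kappa/2}$ keeps $\mathcal T_\tau M$ in $\mathbb B_\kappa$. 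Third, one shows $\mathcal T_\tau$ is a contraction in the sup metric on $\mathfrak L$ (possibly after a mild $H^{2-\kappa}$-weighting) using the strict exponential decay of $V$; the unique fixed point $M^*$ yields a manifold $\mathcal M := \Gamma(M^*)$ of dimension $2N+1 = \dim P_N\Phi$, and strict invariance $\bar S(t)\mathcal M = \mathcal M$ comes from backward uniqueness on $\mathcal M$, itself a consequence of the cone estimate.

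For exponential tracking, given a solution $w(t)$ I would construct $\tilde w(t) \in \mathcal M$ as the $\Phi$-limit along a sequence $T_n \to +\infty$ of the trajectories on $\mathcal M$ whose $P_N$-component at $t = T_n$ equals $P_N w(T_n)$; the cone inequality applied to $\xi = w - \tilde w$, which lies in $\mathcal K^+$ by construction, produces the decay rate $\gamma > 0$. The $C^{1+\varepsilon}$ assertion under $\mathcal F \in C^{1+\beta}$ follows from the fiber contraction theorem applied to the derivative cocycle on the tangent bundle of $\mathcal M$, the cone-induced spectral gap providing the uniformly contracting invariant section whose graph is $DM^*$, exactly as in \cite{rom-man}, \cite{Zel2}.

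The main obstacle, and the reason one must localize everything to $\mathbb B_\kappa$, is the derivative loss in $\mathcal F$: the strong cone inequality closes only thanks to the $-\mu\|\xi\|^2_{H^2_{per}}$ dissipation, and estimating this $H^2$-norm along pairwise differences requires an a priori $H^{2-\kappa}_{per}$-control on $Q_Nw$ which is only available inside $\mathbb B_\kappa$. Consequently the contraction estimates for $\mathcal T_\tau$ must be run in a norm sensitive to this $H^{2-\kappa}$-control rather than the naive $\Phi$-sup norm, and preventing a half-derivative loss at each iteration of the graph transform is the real technical burden of the proof.
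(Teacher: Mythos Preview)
The paper does not actually prove this theorem: immediately after the statement it simply says ``It is well-known result, see e.g.\ \cites{mal-par,Zel2}, that the validity of the strong cone property in the differential form leads to the existence of an $(2N+1)$-dimensional inertial manifold,'' and moves on. So there is nothing to compare your argument against in the paper itself; the theorem is treated as a black box imported from \cite{mal-par} and \cite{Zel2}.

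Your outline is precisely the graph-transform/invariant-cone argument of those references, and the paper even signals in the introduction that this is the intended route (``the graph transform and invariant cones instead of the Perron method''). A couple of small remarks on your sketch. First, when you pass from the variational cone inequality to differences of genuine solutions via $L(t)=\int_0^1\mathcal F'(sw_1+(1-s)w_2)\,ds$, note that $sw_1+(1-s)w_2$ is not a solution, while the strong cone property as stated is formulated for trajectories $w(t)$; what makes the averaging legitimate is that in the actual verification (Theorem~\ref{Th3.main}) every estimate on $(\mathcal F'(w)\xi,AQ_N\xi-AP_N\xi)$ uses only that $w\in\Bbb B_{1/4}$, and $\Bbb B_\kappa$ is convex. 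You should say this explicitly. Second, your last paragraph slightly overstates the difficulty: the $H^{2-\kappa}$-control on $Q_Nw$ is needed to \emph{verify} the cone inequality in the concrete application, but once the cone inequality is granted as a hypothesis (as in the present theorem) the graph-transform contraction closes in $\Phi$ without an additional fractional weighting; the role of $\Bbb B_\kappa$ here is only to provide an invariant domain on which the hypothesis holds and into which the image of every admissible graph falls (by the $Q_N\mathcal F$ bound and the smoothing estimate you wrote).
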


It is well-known result, see e. g. \cites{mal-par,Zel2}, that the validity of the strong cone property in the differential form leads to the existence of an $(2N+1)-$dimensional inertial manifold.

The following theorem can be considered as one of the main results of this chapter.
\begin{theorem}\label{Th3.main}
Under above assumptions  for infinity many values of $N\in \Bbb N$ equation \eqref{1D_fin_fin} possesses a $(2N+1)-$dimensional inertial manifold. Moreover these inertial manifolds are $C^{1+\varepsilon}$-smooth for some small positive $\eb=\eb(N)>0$.
\end{theorem}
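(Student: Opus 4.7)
The plan is to verify the three hypotheses of Theorem \ref{Th3.IM} for equation \eqref{1D_fin_fin}, regarded as the abstract semilinear parabolic equation $\Dt w+Aw=\mathcal F(w)$ in $\Phi=H^1_{per}$ with $A=1-\Dx$ and
$$
\mathcal F(w)=\mathcal F_1(w)+\mathcal F_2(w)+T(w)-\Theta(w)\Nx w.
$$
That $\mathcal F:\Phi\to L^2_{per}$ is globally Lipschitz and that $Q_N\mathcal F$ is globally bounded follow immediately from Theorem \ref{T_F_1,F_2} and Lemma \ref{T}: each summand is Lipschitz, while $T(w)\in P_N\Phi$ forces $Q_NT=0$ and the remaining summands are uniformly bounded on $\Phi$ thanks to the cut-off functions $\theta$ and $\phi$. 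The invariance of $\Bbb B_\kappa$ is the lemma just above. Hence the whole task reduces to verifying the strong cone property on $\Bbb B_\kappa$ for $N$ from an infinite family.

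Differentiating $V(\xi)=\|Q_N\xi\|_\Phi^2-\|P_N\xi\|_\Phi^2$ along the equation in variations $\Dt\xi+A\xi=\mathcal F'(w(t))\xi$ yields
$$
\tfrac{d}{dt}V(\xi)=-2\|AQ_N\xi\|_{L^2}^2+2\|AP_N\xi\|_{L^2}^2+2\bigl(\mathcal F'(w)\xi,\,A(Q_N-P_N)\xi\bigr).
$$
The decisive structural observation is that the ``bad'' first-order part $-\Theta(w)\Nx\xi$ of $\mathcal F'(w)\xi$ contributes \emph{zero} to the cross term: $\Theta(w)$ is a scalar, $(\Nx\xi,A\xi)=0$ for any periodic $\xi$ by integration by parts, and $\Nx$, $A$, $P_N$, $Q_N$ all commute, hence $(\Nx\xi,A(Q_N-P_N)\xi)=0$. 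This vanishing is precisely why the non-small term $\Theta(w)\Nx w$ does not obstruct the construction. The contribution of $T'(w)\xi$ is nonpositive by Lemma \ref{T}, and when $\|(\Dx-1)P_Nw\|_{L^2}^2\ge 4R^2$ it equals $-\|P_N\xi\|_{H^2}^2$, effectively halving the antidissipative $+2\|AP_N\xi\|_{L^2}^2$ on $P_N$ in the spirit of the spatial-averaging method. The remaining pieces are controlled by $|(\mathcal F_1'(w)\xi,A(Q_N-P_N)\xi)|\le CK^{-1/2}\|\xi\|_\Phi\|\xi\|_{H^2}$, by $|(\mathcal F_2'(w)\xi,A(Q_N-P_N)\xi)|=|(A^{1/2}\mathcal F_2'(w)\xi,A^{1/2}(Q_N-P_N)\xi)|\le C_K\|\xi\|_\Phi^2$, and by $|\Theta'(w)[\xi]\,(\Nx w,A(Q_N-P_N)\xi)|\le C\|\xi\|_\Phi\|\xi\|_{H^2}$.

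These contributions are absorbed by a case split on $\|(\Dx-1)P_Nw\|_{L^2}^2$. When this quantity is $\ge 4R^2$, the spectral shift from $T'$ produces an effective gap uniform in $N$ into which Young's inequality disposes of the mixed $\|\xi\|_\Phi\|\xi\|_{H^2}$ terms; with $K$ chosen large (so that $\mathcal F_1$ is small) and $N$ large enough that $\lambda_N$ dominates $C_K$, the cone inequality closes. When $\|(\Dx-1)P_Nw\|_{L^2}^2<4R^2$, the bound $\|Q_Nw\|_{H^{2-\kappa}}\le R_\kappa$ on $\Bbb B_\kappa$ combined with the now-bounded $\|P_Nw\|_{H^2}$ yields $\|w\|_{H^{2-\kappa}}\le C$ uniformly, so the interpolation $\|\xi\|_{H^{1+\kappa}}\le\|\xi\|_\Phi^{1-\kappa}\|\xi\|_{H^2}^\kappa$ sharpens the $\Theta'$ estimate to $C\|\xi\|_\Phi^{2-\kappa}\|\xi\|_{H^2}^\kappa$, and a parallel improvement of the $\mathcal F_2$-bound (using that $P_K\Dx u$ is controlled by $K^\kappa\|u\|_{H^{2-\kappa}}$) allows the standard spectral-gap argument to close provided $\lambda_{N+1}>\lambda_N$ and $N$ is large. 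Choosing $N$ from the infinite family $\{2n:n\ge1\}$, where $\lambda_{N+1}=(n+1)^2+1>n^2+1=\lambda_N$, and fixing first $K$ large and then $N$ large in a $K$-dependent way, defines $\alpha(w)$ case-by-case as a uniformly bounded function and yields $\tfrac{d}{dt}V+\alpha V\le -\mu\|\xi\|_{H^2}^2$ with a uniform $\mu>0$. Theorem \ref{Th3.IM} then delivers the $(2N+1)$-dimensional Lipschitz inertial manifold, and its $C^{1+\varepsilon}$-smoothness follows from the $C^\infty$-regularity of $\mathcal F:\Phi\to L^2_{per}$ established in Theorem \ref{T_F_1,F_2} and Lemma \ref{T}. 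The principal obstacle throughout is the ``rank-one'' term $\Theta'(w)[\xi]\Nx w$, which is absent in the Dirichlet and Neumann settings and not small as $K\to\infty$; controlling it forces the combined use of the $T$-operator for large $P_Nw$ and of the $H^{2-\kappa}$-regularity of $\Bbb B_\kappa$ for small $P_Nw$, with the choice of $N$ tied to $K$ via the spectral-gap consistency condition.
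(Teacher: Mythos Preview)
Your proof is correct and follows the same architecture as the paper: the vanishing of $\Theta(w)(\Nx\xi,A(Q_N-P_N)\xi)$ by integration by parts, the case split on $\|P_Nw\|_{H^2}$, the use of $T'$ to generate the extra negativity in the large-$P_Nw$ regime, and the use of the $H^{2-\kappa}$ control from $\Bbb B_\kappa$ in the small-$P_Nw$ regime are exactly the paper's ingredients.

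The one genuine technical difference is your handling of the $\Theta'$ term in the small case. The paper performs a three-way split of the quadratic form $((\bar\alpha-A)\xi,A(Q_N-P_N)\xi)$ to extract, in addition to the $H^1$ and $H^2$ negative terms, an intermediate $-\tfrac{\lambda_{2N+1}^{3/4}-\lambda_{2N}^{3/4}}{8}\|\xi\|_{H^{5/4}}^2\sim -N^{1/2}\|\xi\|_{H^{5/4}}^2$ term; it then bounds the $\Theta'$ contribution directly by $\tilde C\|\xi\|_{H^{5/4}}^2$ (via $\|w\|_{H^{7/4}}\le C$ on $\Bbb B_{1/4}$) and absorbs it into that intermediate term. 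Your route---bounding by $C\|\xi\|_{H^1}^{2-\kappa}\|\xi\|_{H^2}^{\kappa}$ and applying Young with parameter $\sim 1/N$, so the resulting $H^1$ coefficient $\sim N^{\kappa/(2-\kappa)}$ is dominated by the $\sim N$ spectral gap---works just as well and avoids the algebraic identity behind the three-way split, at the mild cost of being less explicit.

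Two small points of phrasing: the ``effective gap'' produced by $T'$ in the large case is not uniform in $N$ but rather of order $\lambda_{2N}\sim N^2$ (this is what absorbs the $O(N)$ coefficient coming from Young on $C\|\xi\|_{H^1}\|\xi\|_{H^2}$); and the ``parallel improvement of the $\mathcal F_2$-bound'' you mention is unnecessary, since $C_K\|\xi\|_{H^1}^2$ is already absorbed by the $H^1$ gap term once $N\gg C_K$.
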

\begin{proof} According to Theorem \ref{Th3.IM}, we only need to verify the validity of the strong cone condition on the invariant set $\Bbb B_\kappa$ for some $\kappa>0$. The rest of the assumptions of this theorem are already verified above. We fix $\kappa=\frac14$ and write out the equation of variations which corresponds to equation \eqref{1D_fin_fin}:
\begin{equation}\label{3.var}
\partial_t \xi  +A\xi = -\Theta(w)\Nx\xi-(\Theta'(w),\xi)_{H^1}\Nx w+\mathcal F_1'(w)\xi+\mathcal F_2'(w)\xi+T'(w)\xi,
\end{equation}
where  $w(t)$ is the solution of the equation \eqref{1D_fin_fin} belonging to $\Bbb B_{1/4}$.
Multiplying this equation
 by $A Q_N \xi - A P_N \xi$ and denoting $\bar\alpha:=\frac{\lambda_{2N+1}+\lambda_{2N}}2$, we get
\begin{multline} \label{h}
\frac{1}{2}\frac{d}{dt} V(\xi) + \bar\alpha V(\xi) =  ((\bar\alpha -A)\xi, A Q_N \xi) - ((\bar\alpha -A) \xi, AP_N \xi) -\\-\Theta(w)(\Nx\xi,A Q_N \xi - A P_N \xi)-(\Theta'(w),\xi)_{H^1}(\Nx w,A Q_N \xi - A P_N \xi)+\\+(\mathcal{F}'_1(w)\xi,A Q_N \xi - A P_N \xi )+
(\mathcal{F}'_2(w)\xi, A Q_N \xi -A P_N \xi)  - (T'(w)\xi, A P_N \xi).
\end{multline}
Let us estimate every term in the right-hand side of this inequality separately.
Integrating by parts in the first term, we see that
\begin{equation}
\Theta(w)(\partial_x \xi,A Q_N \xi - A P_N \xi) =0.
\end{equation}
Due to estimate \eqref{2.key} on the nonlinearity $\mathcal{F}_1$ we have
\begin{equation}\label{est_F_1}
(\mathcal{F}'_1(w)\xi, A Q_N \xi - A P_N \xi) \le C K^{-1/2}\|\xi\|_{H^1_{per}}\|\xi\|_{H^2_{per}},
\end{equation}
and estimate \eqref{2.key} on $\mathcal{F}_2$ gives us
\begin{equation}\label{est_F_2}
(\mathcal{F}'_2(w)\xi, A Q_N \xi -A P_N \xi) \le C_K \| \xi\|^2_{H^1_{per}}.
\end{equation}
In next estimates, we will use the notations
 \begin{multline}
 e_{2n}:=\cos(nx),\ n=\{0\}\cup\Bbb N,\ \ e_{2n-1}:=\sin(nx), \ n\in\Bbb N;\\
 \lambda_{0}=1,\ \ \lambda_{2n}=\lambda_{2n-1}:=n^2+1,\ \ n\in\Bbb N
 \end{multline}
 and  formulas
 $$
 \xi:=\sum_{n=1}^\infty\xi_n e_n,\ \ P_N\xi=\sum_{n=1}^{2N}\xi_n e_n,\ \ Q_N\xi:=\sum_{n=2N+1}^\infty\xi_n e_n.
 $$
 Then, we estimate the linear terms as follows
\begin{multline}\label{+}
((\bar\alpha -A)\xi, -A P_N \xi)=\sum_{n=0}^{2N}(\lambda_n^2 - \bar\alpha \lambda_n)\xi_n^2 = \frac{1}{2}\sum_{n=0}^{2N}(\lambda_n - \bar\alpha)\lambda_n\xi_n^2 + \\ \frac{1}{4}\sum_{n=0}^{2N}\left(\lambda_n^{3/4} - \frac{\bar\alpha}{\lambda_n^{1/4}}\right)\lambda_n^{5/4}\xi_n^2 +  \frac{1}{4} \sum_{n=0}^{2N}\left(1-\frac{\bar\alpha}{\lambda_n}\right)\lambda_n^2 \xi_n^2\le\\
\frac{1}{2}(\lambda_{2N} -\bar\alpha)\|P_N \xi\|_{H^1}^2 + \frac{1}{4}\left(\lambda_{2N}^{3/4} - \frac{\bar\alpha}{\lambda_{2N}^{1/4}}\right)\|P_N \xi\|_{H^{5/4}}^2+  \frac{1}{4}\left(1 - \frac{\bar\alpha}{\lambda_{2N}}\right)\|P_N \xi\|_{H^2}^2,
\end{multline}
and
\begin{multline}\label{-}
((\bar\alpha -A)\xi, A Q_N \xi) = \sum_{n=2N+1}^{\infty}(\bar\alpha \lambda_n - \lambda_n^2)\xi_n^2\le\\ \frac{1}{2}(\bar\alpha - \lambda_{2N+1})\|Q_N \xi\|_{H^1}^2 +\frac{1}{4}\left(\frac{\bar\alpha}{\lambda^{1/4}_{2N+1}} - \lambda_{2N+1}^{3/4}\right)\|Q_N \xi\|^2_{H^{5/4}}+ \frac{1}{4}\left(\frac{\bar\alpha}{\lambda_{2N+1}} - 1\right)\|Q_N \xi\|^2_{H^2}.
\end{multline}
We recall that $\bar\alpha = \frac{\lambda_{2N+1} + \lambda_{2N}}{2}$, consequently
\begin{multline}
((\alpha -A)\xi, -A P_N \xi) + ((\alpha -A)\xi, A Q_N \xi) \le\\\le -\frac{\lambda_{2N+1} - \lambda_{2N}}{4}\|\xi\|^2_{H^1}- \frac{\lambda_{2N+1}^{3/4} - \lambda_{2N}^{3/4}}{8}\|\xi\|^2_{H^{5/4}} - \frac{\lambda_{2N+1} - \lambda_{2N}}{8\lambda_{2N+1}}\|\xi\|^2_{H^2}=\\=-\frac{\lambda_{2N+1} - \lambda_{2N}}{4}\|\xi\|^2_{H^1}- \frac{\lambda_{2N+1}^{3/4} - \lambda_{2N}^{3/4}}{8}\|\xi\|^2_{H^{5/4}} - \frac{\lambda_{2N+1} - \lambda_{2N}}{16 \lambda_{2N+1}}\|\xi\|^2_{H^2}-\mu\|\xi\|^2_{H^2},
\end{multline}
where, we set $\mu:=\frac{\lambda_{2N+1} - \lambda_{2N}}{16 \lambda_{2N+1}}$. Inserting the obtained estimates into the right-hand side of \eqref{h} and using that
$$
C K^{-1/2}\|\xi\|_{H^1_{per}}\|\xi\|_{H^2_{per}}\le \frac{\lambda_{2N+1}-\lambda_{2N}}8\|\xi\|^2_{H^1_{per}}+C^2K^{-1}\frac2{\lambda_{2N+1} - \lambda_{2N}}\|\xi\|^2_{H^2_{per}}
$$
 we arrive at
\begin{multline}\label{3.h}
\frac{1}{2}\frac{d}{dt} V(\xi) + \bar\alpha V(\xi)+\mu\|\xi\|^2_{H^2_{per}}\le
-(\Theta'(w),\xi)_{H^1}(\Nx w,A Q_N \xi - A P_N \xi)-\\-(T'(w)\xi, A P_N \xi)-
\(\frac{\lambda_{2N+1}-\lambda_{2N}}8-C_K\)\|\xi\|^2_{H^1_{per}}
 - \frac{\lambda_{2N+1}^{3/4} - \lambda_{2N}^{3/4}}{8 }\|\xi\|^2_{H^{5/4}_{per}}-\\-\(\frac{\lambda_{2N+1} - \lambda_{2N}}{16 \lambda_{2N+1}}-C^2K^{-1}\frac2{\lambda_{2N+1} - \lambda_{2N}}\)\|\xi\|^2_{H^2_{per}}
\end{multline}
Let us now estimate the first term in the right-hand side of \eqref{3.h}. To this end, we fix an arbitrary $t \ge 0$ and consider two cases: 1)  $\|P_N w(t)\|_{H^2_{per}}\le2R$ and 2) $\|P_N w(t)\|_{H^2_{per}}>2R$ where the constant $R$ is the same as in \eqref{3.catphi}.
\par
In the first case, using also that $w\in\Bbb B_{1/4}$, we conclude that
\begin{equation}
\|w\|^2_{H^{7/4}_{per}} \le \|P_N w\|^2_{H^2_{per}} + \|Q_N w\|^2_{H^{7/4}_{per}} \le 2R + R_{1/4}:=\bar{C}.
\end{equation}
Therefore, using also that $\Theta'(w)$ is globally bounded in $H^1_{per}$
\begin{equation}
|(\Theta'(w),\xi)_{H^1}(\partial_x w,A Q_N \xi - AP_N \xi)|\le C\|w\|_{H^{7/4}_{per}}\|\xi\|_{H^1_{per}}\|\xi\|_{H^{5/4}_{per}}\le \tilde{C}\|\xi\|^2_{H^{5/4}_{per}}.
\end{equation}


As follows from Lemma \ref{T} additional term containing $T'(w)$ does not make any difference since
$(T'(w)\xi, -A P_N \xi) \le 0$.
Therefore, in the first case inequality \eqref{3.h} reads
\begin{multline}\label{3.h1}
\frac{1}{2}\frac{d}{dt} V(\xi) + \bar\alpha V(\xi)+\mu\|\xi\|^2_{H^2_{per}}\le
\(\frac{\lambda_{2N+1}-\lambda_{2N}}8-C_K\)\|\xi\|^2_{H^1_{per}}-\\
 - \(\frac{\lambda_{2N+1}^{3/4} - \lambda_{2N}^{3/4}}{8 }-\tilde C\)\|\xi\|^2_{H^{5/4}_{per}}-\(\frac{\lambda_{2N+1} - \lambda_{2N}}{16 \lambda_{2N+1}}-C^2K^{-1}\frac2{\lambda_{2N+1} - \lambda_{2N}}\)\|\xi\|^2_{H^2_{per}}.
\end{multline}
We now recall that the  eigenvalues  $\lambda_{2N}=N^2+1$ and $\lambda_{2N+1}=(N+1)^2+1$. Therefore, for  $N>0$, $\lambda_{2N+1}-\lambda_{2N}=2N+1$ and
$$
\frac{(\lambda_{2N+1}-\lambda_{2N})^2}{\lambda_{2N+1}}=\frac{(2N+1)^2}{(N+1)^2+1}\ge1.
$$
Thus, if we fix the parameter $K\ge K_0$ in such way that
 \begin{equation}\label{3.K}
 C^2K^{-1}\le \frac1{64},
 \end{equation}
 the last term in the left hand side will be non-positive. Crucial for us that we may fix $K$ in such way that this property holds for all  $N$s simultaneously. Obviously, the first two terms in the LHS of \eqref{3.h1} will be also non-positive if $N$  is large enough.
Thus in the case $\|P_N w\|_{H^2_{per}}\le 2R$, we may take
$$
\alpha(w):=\bar \alpha=\frac{\lambda_{2N+1}+\lambda_{2N}}2
$$
and the strong cone condition will be satisfied.
\par
Let us now consider the second case where $\|P_Nw(t)\|_{H^2_{per}}>2R$. In this case, the auxiliary map $T$ is really helpful. Indeed, according to Lemma \ref{T},
\begin{multline}
(T'(w)\xi,-AP_N\xi)\le -\frac12\|AP_N\xi\|_{L^2_{per}}^2\le -\frac{\lambda_{2N}}2\|P_N\xi\|^2_{H^1_{per}}=-\frac{\lambda_{2N}}4\|P_N\xi\|^2_{H^1_{per}}+\\+
\frac{\lambda_{2N}}4\(\|Q_N\xi\|^2_{H^1_{per}}-\|P_N\xi\|^2_{H^1_{per}}\)-
\frac{\lambda_{2N}}4\|Q_N\xi\|^2_{H^1_{per}}=\frac{\lambda_{2N}}4V(\xi)-\frac{\lambda_{2N}}4
\|\xi\|^2_{H^1_{per}}.
\end{multline}
Using now that the norm of the derivative $\|\Theta'(w)\|_{H^1_{per}}$ vanishes if $\|w\|_{H^1_{per}}$ is large and, consequently, $\|\Theta'(w)\|_{H^1_{per}}\|w\|_{H^1_{per}}\le \bar C_1$, we estimate the first term in the right-hand side of \eqref{3.h} as follows:
\begin{multline}
|(\Theta'(w),\xi)_{H^1}(\Nx w,AQ_N\xi-AP_{N}\xi)|\le \|\Theta'(w)\|_{H^1_{per}}\|w\|_{H^1_{per}}\|\xi\|_{H^1_{per}}\|\xi\|_{H^2_{per}}\le\\\le \bar C_1\|\xi\|_{H^1_{per}}\|\xi\|_{H^2_{per}}\le \bar C_1^2\frac{8\lambda_{2N+1}}{\lambda_{2N+1}-\lambda_{2N}}\|\xi\|^2_{H^1_{per}}+
\frac{\lambda_{2N+1}-\lambda_{2N}}{32\lambda_{2N+1}}\|\xi\|^2_{H^2_{per}}.
\end{multline}
Thus, the analogue of \eqref{3.h1} for the second case reads
\begin{multline}\label{3.h2}
\frac{1}{2}\frac{d}{dt} V(\xi) + \(\bar\alpha-\frac{\lambda_{2N}}4\) V(\xi)+\mu\|\xi\|^2_{H^2_{per}}\le
\(\frac{\lambda_{2N+1}-\lambda_{2N}}8-C_K\)\|\xi\|^2_{H^1_{per}}-\\
 - \(\frac{\lambda_{2N}}4 -\bar C_1^2\frac{8\lambda_{2N+1}}{\lambda_{2N+1}-\lambda_{2N}}\)\|\xi\|^2_{H^{1}_{per}}-\(\frac{\lambda_{2N+1} - \lambda_{2N}}{32 \lambda_{2N+1}}-C^2K^{-1}\frac2{\lambda_{2N+1} - \lambda_{2N}}\)\|\xi\|^2_{H^2_{per}}.
\end{multline}
We see that the third term in the right-hand side is non-positive if the parameter $K$ satisfies exactly the same assumption \eqref{3.K} as in the first case (in particular, it can be fixed independently of $N$). Moreover, since $\lambda_{2N}\sim N^2$ and $\lambda_{2N+1}-\lambda_{2N}\sim 2N+1$, the second term and the first terms are also non-positive if $N$ is large enough. Thus, we are able to fix the parameters $K$ and $N$ in such ways that the right-hand sides of {\it both} inequalities \eqref{3.h1} and \eqref{3.h2} will be non-positive. Let us now introduce the function
\begin{equation}\label{3.a}
\alpha(w):=\begin{cases}\frac{\lambda_{2N+1}+\lambda_{2N}}2,\ &\ \|P_Nw\|_{H^2_{per}}\le 2R\\
   \frac{\lambda_{2N+1}+\lambda_{2N}}2-\frac{\lambda_{2N}}4,\ & \ \|P_Nw\|_{H^2_{per}}>2R.
    \end{cases}
\end{equation}
Then, we have proved that for all sufficiently large  $N$s, the strong cone inequality
\begin{equation}\label{3.cone}
\frac12\frac d{dt}V(\xi(t))+\alpha(w(t))V(\xi(t))\le-\mu\|\xi(t)\|^2_{H^2_{per}}
\end{equation}
is satisfied and the theorem is proved.
\end{proof}

\section{Vector case: a counterexample}\label{s4}

In this section, we will show that, in contrast to the scalar case considered above, the (Lipschitz continuous) IM may not exist in the case of a system of RDA equations \eqref{eq_1D} (i.e., if $m>1$). Analogously to \cite{EKZ}, our counterexample is built up based on the counterexample to Floquet theory for {\it linear} RDA equations with time-periodic coefficients. Namely, we consider the following system of linear RDA equations:
\begin{equation}\label{au}
\Dt{\rm u}-\partial_x^2{\rm u}+f(t,x)\partial_x{\rm u}+g(t,x){\rm u}=0
\end{equation}
endowed with periodic boundary conditions. We assume that ${\rm u}=(v(t,x),u(t,x))$ where the unknown functions as well as given $2T$-periodic in time functions $f$ and $g$ are {\it complex} valued, so we will consider a system of two coupled complex valued RDA equations. Of course, separating the real and imaginary parts of functions $v$ and $u$, we nay rewrite it as a system of four real-valued RDA equations with respect to ${\rm u}=(v_{Re},v_{Im},u_{Re},u_{Im})$, but preserving the complex structure is more convenient for our purposes. The main idea is to construct the functions $f$ and $g$ in such a way that {\it all} solutions ${\rm u}(t)$ will decay faster than exponentially as $t\to\infty$. If these functions are constructed, the standard trick with producing the space-time periodic functions $f$ and $g$ as particular solutions of some extra nonlinear RDA system, will give us a super-exponentially attracting limit cycle inside of the global attractor (see e.g., \cite{EKZ}) which clearly contradicts the existence of the IM for the full system.
\par
We first recall that, at least for smooth functions $f$ and $g$, equation \eqref{au} is well-posed in the phase space $\Phi$ (this can be established analogously to Theorem \ref{dis}) and generates a dissipative dynamical process $\{U(t,\tau),\ t\ge\tau, \ \tau\in\R\}$ in the phase space $\Phi$ via
\begin{equation}\label{4.U}
U(t,\tau){\rm u}_\tau:={\rm u}(t), \ \ U(t,s)=U(t,\tau)\circ U(\tau,s),\ \ t\ge\tau\ge s,
\end{equation}
where the function ${\rm u}(t)$ solves \eqref{au} with the initial data
 ${\rm u}\big|_{t=\tau}={\rm u}_\tau\in\Phi$.
In particular, since the functions $f$ and $g$ are $2T$-periodic in time, the long-time behavior of solutions of \eqref{au} is completely determined by the iterations of the period map
\begin{equation}\label{4.per}
P:=U(2T,0).
\end{equation}
Since, due to the smoothing property, the linear operator $P$ is compact its spectrum consists of $\{0\}$ as an essential spectrum and at most countable number of non-zero eigenvalues of finite multiplicity. It is well-known that any eigenvalue $\mu\ne0$ of this operator generates the so-called Floquet-Bloch solutions of \eqref{au} of the form
$$
{\rm u}_{\mu,n}(t):=t^{n-1}e^{\nu t}Q_{n-1}(t),
$$
 where $\nu:=\frac1{2T}\ln\mu$, $n\ge1$ does not exceed the algebraic multiplicity of the eigenvalue $\mu$ and $Q_{n}(t)$ are $2T$-periodic $\Phi$-valued functions. It is also known that, at least on the level of abstract parabolic equations in a Hilbert space, the linear combinations of Floquet-Bloch solutions are not dense in the space of all solutions. Moreover, the point spectrum of the operator $P$ may be empty which means that
 \begin{equation}\label{4.spec}
 \sigma(P)=\{0\},
 \end{equation}
see \cite{kuch} for more details. According to the Gelfand spectral radius formula, this will be the case when {\it all} solutions of problem \eqref{au} decay faster than exponential as $t\to\infty$ and this is exactly the case which we are interested in. The next theorem gives the desired example of the functions $f$ and $g$ such that \eqref{4.spec} is satisfied. To the best of our knowledge, similar examples have been previously known only for abstract parabolic equations (with non-local nonlinearities), but not for systems of second order parabolic PDEs.

\begin{theorem}\label{TH2} For every sufficiently large $T$ there exist smooth
  functions $f(t,x)\in \mathcal L(\Bbb C^2,\Bbb C^2)$ and $g(t,x)\in \Bbb C^2$ which are $2T$-periodic in time and $2\pi$-periodic in space such that all solutions of equation \eqref{au} decay faster than exponential as $t\to\infty$. Moreover, the following estimate holds for any of such solutions
\begin{equation} \label{dec}
\|{\rm u}(t)\|_{L^2} \le C e^{-\gamma t^3} \|{\rm u}(0)\|_{L^2},
\end{equation}
where positive constants $C$ and $\gamma$ are independent of ${\rm u}(0)\in L^2(-\pi,\pi)$.
\end{theorem}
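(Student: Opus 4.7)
The plan splits into two parts: (A) deducing \eqref{dec} from a weighted-shift structure of the monodromy $P:=U(2T,0)$, and (B) constructing $f,g$ that produce that structure. For (A), I first aim to show there exist positive weights $\mu_n,\nu_n\le e^{-Kn^2}$ (with $K=K(T)>0$) such that, writing ${\rm u}=(v,u)$ in the Fourier basis $e_n(x):=e^{inx}$,
\begin{equation*}
P\,(e_n,0)=(\mu_n\,e_{n+1},0),\qquad P\,(0,e_n)=(0,\nu_{n-1}\,e_{n-1}),\qquad n\in\mathbb Z.
\end{equation*}
Granted this, iterating yields
\begin{equation*}
\|P^k(e_n,0)\|_{L^2}^2=\prod_{j=0}^{k-1}\mu_{n+j}^2\le\exp\Bigl(-2K\sum_{j=0}^{k-1}(n+j)^2\Bigr)\le C\,e^{-\tilde K k^3},
\end{equation*}
uniformly in $n$ (the worst case being $n\sim-k/2$, where the sum of squares is minimised and still of order $k^3$), and symmetrically for the second component; since $\{(e_n,0),(0,e_n)\}$ is an orthonormal basis and $P^k$ maps its elements to mutually orthogonal multiples, Parseval gives $\|P^k\|_{\mathcal L(L^2)}\le C\,e^{-\tilde Kk^3}$. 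Two consequences: Gelfand's spectral radius formula forces $\sigma(P)=\{0\}$; and writing $t=2Tk+r$ with $r\in[0,2T)$ together with the obvious short-time bound $\|U(r,0)\|_{\mathcal L(L^2)}\le C'$ gives \eqref{dec} with $\gamma\sim\tilde K/(2T)^3$.

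For (B), the approach is a change of variables ${\rm u}(t,x)=R(t,x){\rm w}(t,x)$ with $R$ a smooth $2\times 2$ matrix-valued function, \emph{not} required to be $x$-periodic itself, but chosen so that the coefficients $f=-2(\partial_xR)R^{-1}$ and $g=(\partial_x^2R-\partial_tR)R^{-1}-f\,(\partial_xR)R^{-1}$ come out $2\pi$-periodic in $x$ and $2T$-periodic in $t$; the substitution then reduces \eqref{au} to the decoupled heat equation $\partial_t{\rm w}-\partial_x^2{\rm w}=0$. Defining $R$ piecewise on $[0,T]$ and $[T,2T]$ with endpoint values chosen so that ${\rm u}(2T,\cdot)=R(2T,\cdot)\,e^{2T\partial_x^2}R(0,\cdot)^{-1}{\rm u}(0,\cdot)$ realises the claimed weighted shifts, the heat semigroup contributes Gaussian factors $e^{-Tn^2}$ on each half-period, giving $\mu_n,\nu_n\lesssim e^{-2Tn^2}$. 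The $2T$-periodicity of $f,g$ in time is arranged by making $R$ quasi-periodic, $R(2T,x)=R(0,x)M$ for a constant matrix $M$, so that $(\partial_tR)R^{-1}$ is still $2T$-periodic even though $R$ itself is not.

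The main obstacle is that a genuine Fourier-mode shift is incompatible with periodicity of the coefficients in $x$: the naive scalar ansatz $W=e^{i\alpha(t)x}$ would implement the shift $e_n\mapsto e_{n+\alpha(T)}$ at time $T$, but produces $\partial_tW/W=i\alpha'(t)x$ in $g$, which is not $2\pi$-periodic in $x$. The two-component complex structure provides exactly the extra degrees of freedom needed to repair this: by arranging the off-diagonal entries of $R$ so that the non-periodic linear-in-$x$ contribution from one component is cancelled by an off-diagonal coupling to the other, the resulting $g$ becomes $2\pi$-periodic at the price of mixing $v$ and $u$ in the interior of the time interval (which is harmless, since it is the endpoint action that feeds into the shift structure of $P$). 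Carrying out this algebraic cancellation explicitly---and writing down the quasi-periodic $R$ that simultaneously produces the right endpoint action and periodic coefficients---is the hardest single step of the proof; everything else reduces to the short, essentially linear-algebra verification of part (A).
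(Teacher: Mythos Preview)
Your part (A) is correct and is essentially the paper's own reduction: once $P$ acts as a weighted bilateral shift with $|\mu_n|+|\nu_n|\le e^{-Kn^2}$, the bound $\sum_{j=0}^{k-1}(n+j)^2\ge ck^3$ (uniform in $n$, the minimum occurring near $n\sim -k/2$) gives $\|P^k\|\le Ce^{-\gamma k^3}$ and hence \eqref{dec}.

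Part (B), however, is a genuinely different route and carries a real gap. You propose to conjugate to the pure heat equation via $u=Rw$ with $R(2T,x)=R(0,x)M$ for a \emph{constant} matrix $M$ (so that $(\partial_tR)R^{-1}$, and hence $f,g$, come out $2T$-periodic). This forces $P=R(0)\,M\,e^{2T\partial_x^2}\,R(0)^{-1}$. A constant $2\times2$ matrix $M$ commutes with the heat semigroup (on periodic or on $N$-twisted periodic functions alike), so $P^k\sim M^k e^{2kT\partial_x^2}$. If $\sigma(M)\ne\{0\}$ then $P$ has nonzero eigenvalues $e^{-2T\omega_n^2}\lambda$, contradicting $\sigma(P)=\{0\}$; if $M$ is nilpotent then $M^2=0$ and $P^2=0$, contradicting backward uniqueness for linear parabolic equations with smooth coefficients (which makes $P$ injective). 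The ``off-diagonal cancellation'' you appeal to cannot remove a non-periodic \emph{diagonal} entry such as $i\alpha'(t)x$ in $(\partial_tR)R^{-1}$: in a $2\times2$ matrix that entry is simply not touched by the off-diagonal ones. So the step you flag as hardest is, in this formulation, obstructed rather than merely unfinished.

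The paper does \emph{not} gauge to a decoupled equation. It writes the system explicitly on the two half-periods. On $[0,T]$ it adds $2i\partial_xv-v$ to the $v$-equation, so that $e_n^v$ acquires decay rate $(n+1)^2$, matching $e_{n+1}^u$, and couples the components by the anti-symmetric pair $(-\varepsilon e^{-ix}u,\ \varepsilon e^{ix}v)$; in each two-plane $\mathrm{span}\{e_n^v,e_{n+1}^u\}$ this is a pure rotation at constant angular speed. On $[T,2T]$ it uses the plain rotation $(-\varepsilon u,\ \varepsilon v)$ in $\mathrm{span}\{e_n^v,e_n^u\}$. With $\varepsilon$ chosen so that each half-period rotates by exactly $\pi/2$, the composition sends $e_n^v\mapsto e_{n+1}^u\mapsto e_{n+1}^v$ (and $e_n^u\mapsto e_{n-1}^v\mapsto e_{n-1}^u$), the Gaussian factors coming from the dissipative part on each piece. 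All coupling coefficients involve only $e^{\pm ix}$ and are therefore $2\pi$-periodic in $x$; smoothness in $t$ is arranged by multiplying the added terms by cut-offs $\theta_1,\theta_2$ of a fixed smooth $2T$-periodic scalar $y(t)$. The rotation genuinely mixes the components throughout the interior of the period; this mixing---not a global gauge to the heat equation---is what makes the Fourier shift compatible with periodic local coefficients.
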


\begin{proof} Let now $e_n:=e^{inx}$, $n\in\Bbb Z$ be the eigenvectors of the operator $-\Dx$ acting in the space of complex-valued functions. Obviously the corresponding eigenvalues are $\lambda_n=n^2$. The following simple formula is however crucial for the construction of our counterexample:
\begin{equation}
e_{n+1}=e^{ix}e_n, \ \ (\Dx +2i\Nx-1)e_n=-\lambda_{n+1}e_n, \ \ n\in\Bbb Z.
\end{equation}
Keeping in mind that our equation has two components ${\rm u}=(v,u)$, we introduce the following base vectors in $[L^2_{per}(-\pi,\pi)]^2$:
\begin{equation}\label{4.vec}
e_n^v:=\(\begin{matrix}1\\0\end{matrix}\)e_n,\ \ e_n^u:=\(\begin{matrix}0\\1\end{matrix}\)e_n.
\end{equation}
Then, the vectors $\{e_n^v,e_n^u\}_{n\in\Bbb Z}$ form an orthogonal base in the space $[L^2_{per}(-\pi,\pi;\Bbb C)]^2$. Moreover, these are the eigenvectors for the unperturbed problem \eqref{au} (with $f=g=0$) which correspond to the eigenvalue $\lambda_n=n^2$ and for every $n\ne0$, the corresponding eigenspace is spanned by $\{e_{\pm n}^v,e^u_{\pm n}\}$ and therefore has the complex dimension $4$ (and real dimension $8$). We intend to construct the functions $f$ and $g$ in such a way that the corresponding period map has the following properties:
\begin{equation}\label{P}
P e_n^v=\mu_ne_{n+1}^v,\ \ P e_n^u=\nu_n e_{n-1}^u,\ \ n\in \Bbb N,
\end{equation}
where  $-\mu_n$ and $-\nu_n$ are some positive multipliers such that
 $$
 |\mu_n|+|\nu_n| \le e^{-KT n^2}
 $$
 for some $K>0$ independent of $n$. Indeed, assume that such example is constructed. Then, clearly the point spectrum of $P$ is empty and, moreover, the following estimate holds:
\begin{equation}
\|P^Ne_{n}^v\|_{L^2} \le e^{-KT\sum_{k=n}^{n+N}k^2} = e^{-\frac{KT}6(N+1)(2N^2+6Nn+6n^2+N)}\le Ce^{-\gamma  N^3},
\end{equation}
for some $\gamma>0$ which is independent of $N$ and $n$ (here we have implicitly used the positivity of the quadratic form $2N^2+6Nn+6n^2$). Arguing analogously, we have
\begin{equation}
\|P^N e_n^u\|_{L^2}\le e^{-KT\sum_{k=n-N}^n k^2}\le Ce^{-\gamma N^3}.
\end{equation}
These estimates, together with \eqref{P}, imply that
\begin{equation}\label{4.cube}
\|P^N\|_{\mathcal L(L^2,L^2)}\le Ce^{-\gamma N^3}.
\end{equation}
Thus, estimate \eqref{dec} is verified and we only need to construct the functions $f$ and $g$ for which the period map $P$ of equation \eqref{au} will satisfy \eqref{P}. Roughly speaking, similarly to \cite{EKZ}, we initially take the unperturbed equation
$$
\Dt{\rm u}=\Dx {\rm u},
$$
and split the time interval  $[0,2T]$ on two parts $[0,T]$ and $[T,2T]$. At the first interval, we shift the spectrum of the $v$-component by adding the term $2i\Nx v-v$ (after this shift the vectors $e_n^v$ and $e_{n+1}^u$ will be in the eigenspace which corresponds to the eigenvalue $\lambda_{n+1}$) and switch on the "rotation" in the plane spanned by $\{e_n^v,e_{n+1}^u\}$ by adding the proper anti-symmetric term. This guesses the following form of the perturbed equation:
$$
\Dt v=\Dx v+(2i\Nx v-v)-\eb e^{-ix}u,\ \ \Dt u=\Dx v+\eb e^{ix}v,\ \ t\in[0,T],
$$
on the first half-period. The parameter $\eb>0$ should be chosen in such way that the half-period map $U(T,0)$ will rotate the direction of $e_n^v$ into the direction of $e_{n+1}^u$ and vise versa.
\par
At the second half-period, we need not to do shift and just put the "rotation" terms
$$
\Dt v=\Dx v-\eb u,\ \ \Dt u=\Dx u+\eb v,\ \ t\in[T,2T],
$$
where we again chose $\eb>0$ in such way that the half-period map $U(2T,T)$ rotates the direction of $e^v_n$ to the direction of $e_n^u$ and vise versa. Then, as not difficult to see  the composition $P=U(2T,T)\circ U(T,0)$ will satisfy relations \eqref{P} and the estimates for $\mu_n$ and $\nu_n$ will be also satisfied.
\par
Thus, the above arguments allow us to construct the desired counterexample in the class of piecewise  constant (in time) periodic functions $f$ and $g$. However, in order to build the counterexample to inertial manifolds, we need the functions $f$ and $g$ to be {\it smooth}, so we need to "smoothify" our construction by adding the properly chosen cut-off functions.
\par
Namely, let us fix an auxiliary $2T-$periodic function $y(t)$ satisfying the following assumptions:
\begin{equation}\label{ass}
\begin{array}{l}
1.\ \ y(t) \text{ is odd and } y(T-t) = y(t)\text{ for all }t;\\
2.\ \ y(t)\text{ has a maximum point at } t=T/2\text{ and }y(T/2) = 1;\\
3.\ \ y''(t)\le 0 \text{ for } 0<t<T \text{ and }y'(t)> 0 \text{ for }0<t<T/2.
\end{array}
\end{equation}
One of the possible choices of  $y(t)$ is  $\sin (\pi t/ T)$.
In addition we introduce a pair of smooth non-negative cut-off functions $\theta_1$ and $\theta_2$:
\begin{equation}
 \theta_1(y) = 0 \text{, for }y \le 1/4,\ \theta_1(y)=1 \text{, for }y \ge 1/2;
\end{equation}
\begin{equation}
\theta_2(y) = 0 \text{, for }y \le 0,\ \theta_2(y)=1 \text{, for } y \ge 1/4.
\end{equation}
Now we are ready to introduce the desired equations:
\begin{equation} \label{exp_au}
\begin{cases}
\pt v = \Dx v + (2 i \Nx v - v)\theta_2(y) - \varepsilon e^{- i x} u  \theta_1(y) - \varepsilon u \theta_1(-y),\\
\pt u = \Dx u + \varepsilon e^{ix} v \theta_1(y) + \varepsilon v \theta_1(-y),
\end{cases}
\end{equation}
where $\varepsilon$ is a small parameter which will be choosing later in such a way that on the first half-period
\begin{equation}
U(T,0)e_n^v = K_n^+e_{n+1}^u \ \ \text{ and } \ \
U(T,0)e_n^u = C_n^+e_{n-1}^v,
\end{equation}
and on the other part of period
\begin{equation}
U(2T,T)e_{n+1}^u =K_n^-e_{n+1}^v \ \ \text{ and }\ \
U(2T,T)e_{n-1}^v =C_n^-e_{n-1}^u,
\end{equation}
for some contraction factors $K^+_n, C^+_n$,  $K^-_n, C^-_n$.
\par
We claim that the proposed equations satisfy all the assumptions of the theorem. Indeed, let us first consider equations \eqref{exp_au} on a half-period $[0,T]$ which due to the specific form of the cut-off functions $\theta_1(y)$, $\theta_2(y)$ and time-periodic function $y(t)$ have a form
\begin{equation}\label{exp_au_1}
\begin{cases}
\pt v = \Dx v + (2 i \Nx v - v)\theta_2(y) - \varepsilon e^{- i x} u  \theta_1(y),\\
\pt u = \Dx u + \varepsilon e^{ix} v \theta_1(y).
\end{cases}
\end{equation}
We fix $T_0$ such that $y(T_0)=1/4$. Then on the intervals $[0, T_0]$ and $[T- T_0, T]$ equations \eqref{exp_au_1} become decoupled:
\begin{equation}
\begin{cases}
\pt v = \Dx v + (2 i \Nx v - v)\theta_2(y),\\
\pt u = \Dx u.
\end{cases}
\end{equation}
Writing these equations in Fourier coordinates, we obtain
\begin{equation}
\begin{cases}
\frac{d}{dt}v_n = -(n^2 + (2 n +1)\theta_2(y))v_n,\\
\frac{d}{dt}u_n = - n^2 u_n.
\end{cases}
\end{equation}
Therefore,
\begin{equation*}
U(T_0,0) e_{n}^v =e^{-T_0n^2 - (2n+1)\int_0^{T_0}\theta_2(y(t))dt }  e_{n}^v, \ \
U(T_0,0) e_{n}^u =e^{-T_0n^2}  e_{n}^u,
\end{equation*}
and
\begin{equation*}
U(T,T-T_0) e_{n}^v =e^{-T_0n^2 - (2n+1)\int^{T}_{T - T_0}\theta_2(y(t))dt }  e_{n}^v,\ \
U(T,T -T_0) e_{n}^u =e^{-T_0n^2}  e_{n}^u.
\end{equation*}
Let us turn to the map $U(T - T_0, T_0)$.
The specific choice of the cut-off functions allows us to rewrite the equation \eqref{exp_au_1} on this interval in the form
\begin{equation}
\begin{cases}
\pt v = \Dx v + 2 i \Nx v - v - \varepsilon e^{- i x} u  \theta_1(y),\\
\pt u = \Dx u + \varepsilon e^{ix} v \theta_1(y).
\end{cases}
\end{equation}
Since $e_n = e^{inx}$ and consequently $e_{n+1} = e^{ix}e_n$, after writing down our equations in Fourier modes an equation on $v_n$ will be coupled with an equation on $u_{n+1}$:
\begin{equation} \label{T_0T}
\begin{cases}
\frac{d}{dt}v_n = - (n+1)^2 v_n - \varepsilon u_{n+1}\theta_1(y),\\
\frac{d}{dt}u_{n+1} = -(n+1)^2 u_{n+1} + \varepsilon v_n \theta_1(y).
\end{cases}
\end{equation}
To study these equations we introduce the polar coordinates:
\begin{equation}
v_n + i u_{n+1} = R_n e^{i \phi_n},
\end{equation}
which leads to two separate equations on the radial and angular coordinates:
\begin{equation}
\frac{d}{dt}R_n = -(n+1)^2 R_n, \ \
\frac{d}{dt} \phi_n = \varepsilon \theta_1(y(t)).
\end{equation}
Fixing
\begin{equation}\label{eps}
\varepsilon:=\frac{\pi}{2 \int_{T_0}^{T - T_0}\theta_1(y(t))dt},
\end{equation}
we see that $U(T- T_0, T_0)$ restricted on the $ \operatorname{span}\left\{ e_{n}^v, e_{n+1}^u \right\}  $ is a composition of the rotation on the angle $\pi/2$ and the proper contraction, more precisely:
\begin{equation*}
U(T-T_0, T_0) e_{n}^v =e^{-(T-2T_0)(n+1)^2}  e_{n+1}^u,\ \
U(T-T_0, T_0) e_{n+1}^u =-e^{-(T-2T_0)(n+1)^2}  e_{n}^v.
\end{equation*}
Taking the composition of maps $U(T_0, 0)$, $U(T-T_0, T_0)$ and $U(T, T-T_0)$, we have
\begin{equation}\label{v_0_1}
U(T, 0) e_{n}^v =e^{-T_0n^2 - (2n+1)\int_0^{T_0}\theta_2(y(t))dt }e^{-(T-2T_0)(n+1)^2}e^{-T_0(n+1)^2}  e_{n+1}^u,
\end{equation}
and
\begin{equation}\label{u_0_1}
U(T, 0) e_{n+1}^u =-e^{-T_0(n+1)^2} e^{-(T-2T_0)(n+1)^2}e^{-T_0n^2 - (2n+1)\int_{T - T_0}^{T}\theta_2(y(t))dt }  e_{n}^v.
\end{equation}
It is remained to consider equations \eqref{exp_au} on the half-period $[T, 2T]$:
\begin{equation}\label{exp_au_2}
\begin{cases}
\pt v = \Dx v  - \varepsilon u \theta_1(-y),\\
\pt u = \Dx u + \varepsilon v \theta_1(-y),
\end{cases}
\end{equation}
the situation here is more or less similar to the case of interval $[0,T]$. Indeed,
due to the specific form of the cut-off function $\theta_1(y)$ and periodic function $y(t)$, on the time intervals $[T, T + T_0]$ and $[2T - T_0, 2T]$ the equations are decoupled:
\begin{equation}
\begin{cases}
\pt v = \Dx v,\\
\pt u = \Dx u.
\end{cases}
\end{equation}
Therefore
\begin{equation*}
U(T + T_0,T) e_{n}^v =e^{-T_0n^2 }  e_{n}^v, \ \
U(T +T_0,0) e_{n}^u =e^{-T_0n^2}  e_{n}^u,
\end{equation*}
and
\begin{equation*}
U(2T,2T - T_0) e_{n}^v =e^{-T_0n^2 }  e_{n}^v, \ \
U(2T,2T - T_0) e_{n}^u =e^{-T_0n^2}  e_{n}^u.
\end{equation*}
Equations \eqref{exp_au_2} on an interval $[T+T_0, 2T- T_0]$ have a form
\begin{equation}
\begin{cases}
\pt v = \Dx v  - \varepsilon u \theta_1(-y),\\
\pt u = \Dx u + \varepsilon v \theta_1(-y),
\end{cases},
\end{equation}
and we see that in Fourier coordinates $v_n$ is coupled with $u_n$ in comparison to the case of interval $[T_0, T-T_0]$, where $v_n$ was coupled with $u_{n+1}$. Namely,

\begin{equation}
\begin{cases}
\frac{d}{dt}v_n = - n^2 v_n - \varepsilon u_n\theta_1(-y),\\
\frac{d}{dt}u_n = - n^2 u_n + \varepsilon v_n\theta_1(-y).
\end{cases}
\end{equation}
As before, we introduce the polar coordinates:
\begin{equation}
v_n + i u_n = r_n e^{i\psi_n},
\end{equation}
and obtain the following equations on radial coordinate $r_n$ and angular coordinate $\psi_n$:
\begin{equation}
\frac{d}{dt}r_n = - n^2 r_n, \ \ \ \frac{d}{dt}\psi_n =  \varepsilon \theta_1(-y).
\end{equation}
Substituting $\varepsilon$ from \eqref{eps} and using symmetry of $y(t)$ we see that phase $\psi_n$  changes on $\pi/2$ on the interval $[T + T_0, 2T - T_0]$. Thus the map $U(2T - T_0, T + T_0)$ restricted on the $\operatorname{span}\left\{  e_{n}^v,  e_{n}^u \right\}  $ is a composition of the rotation and the contraction:
\begin{equation*}
U(2T -T_0,T + T_0) e_{n}^v =e^{-(T - 2T_0)n^2 }  e_{n}^u,
\end{equation*}
and
\begin{equation*}
U(2T - T_0,T + T_0) e_{n}^u =-e^{-(T - 2T_0)n^2}  e_{n}^v.
\end{equation*}
Therefore the composition of maps $U(T + T_0, T)$, $U(2T - T_0, T + T_0)$ and $U(2T, 2T -T_0)$ gives us
\begin{equation}\label{v_0_2}
U(2T, T ) e_{n}^v =e^{-T_0n^2 }e^{-(T - 2T_0)n^2}e^{-T_0n^2 } e_{n}^u
\end{equation}
and
\begin{equation}\label{u_0_2}
U(2T, T ) e_{n}^u =-e^{-T_0n^2 }e^{-(T - 2T_0)n^2}e^{-T_0n^2 }  e_{n}^v.
\end{equation}
 Formulas \eqref{v_0_1}, \eqref{u_0_1}, \eqref{v_0_2} and \eqref{u_0_2} guarantee that the Poincare map $P = U(2T,T)\circ U(T, 0)$ satisfies properties \eqref{P} with
\begin{multline}
\mu_n = -e^{-2T(n+1)^2 - (2n +1)\int_0^{T_0}(\theta_2(y(t))-1)dt} \text{ and }\\
\nu_n = -e^{-2Tn^2 -(2n +1)T - (2n +1)\int_0^{T_0}(\theta_2(y(t))-1)dt}.
\end{multline}
Thus, the theorem is proved.
\end{proof}
\begin{remark}\label{Rem4.cru} It follows from the explicit form of the functions $f(t,x)$ and $g(t,x)$ that they can be written in the form
$$
g(t,x)=\bar g(y(t),e^{ix}), \ \ \bar f(t,x)=f(y(t),e^{ix})
$$
for some $C^\infty$-functions $\bar f$ and $\bar g$. Moreover fixing $y(t)=\sin\frac {\pi t}T$, we may achieve that
\begin{equation}\label{4.fg}
g(t,x)=\bar g(\sin\frac{\pi t}T,\cos x,\sin x), \ \ f(t,x)=\bar f(\sin\frac{\pi t}T,\cos x,\sin x).
\end{equation}
Moreover, as also follows from the construction, the functions $\bar f$ and $\bar g$ are linear with respect to $\sin x$ and $\cos x$.
\end{remark}
We turn now to the nonlinear case. For the reader convenience, we start with discussing some known facts on Lipschitz manifolds, finite-dimensional reduction and attractors, see \cites{rom-th,rom-th1,28,Zel2} for more details.

\begin{definition}\label{Def3.man} A set $\mathcal M$ is a Lipschitz submanifold of dimension $N$ of
 a Hilbert space $\Phi$ if it can be presented locally as a graph of a Lipschitz continuous function. In other words, for any $u_0\in\mathcal M$, there exist $\eb=\eb(u_0)>0$, the open neighborhood $\mathcal V_{u_0}$ of $u_0$ in $\Phi$,   the projector $\mathcal P_{u_0}\in\mathcal L(\Phi,\Phi)$ of rank $N$ and a Lipschitz continuous map $M_{u_0}:\mathcal P_{u_0}\Phi\to (1-\mathcal P_{u_0})\Phi$ such that
 \begin{equation}\label{4.local}
 \mathcal M\cap \mathcal V_{u_0}=\{u_++M_{u_0}(u_+),\ \ u_+\in  B(\eb,\mathcal P_{u_0}u_0,P_{u_0}\Phi)\}.
 \end{equation}
 In particular, this means that
 \begin{equation}\label{4.lip}
 \|u-v\|_{\Phi}\le L_{u_0}\|\mathcal P_{u_0}(u-v)\|_{\Phi}
 \end{equation}
 for all $u,v\in \mathcal V_{u_0}$ and some constant $L_{u_0}$ which is independent of $u$ and $v$.
\end{definition}
\begin{remark}\label{Rem3.alt} Note that there is an alternative definition of a Lipschitz manifold which is also widely used in the literature. Namely, $\mathcal M$ is a Lipschitz manifold in $\Phi$ of dimension $N$ if for every $u_0\in\mathcal M$ there exists a neighborhood $\mathcal V_{u_0}$ of $u_0$ in $\Phi$, the number $\eb=\eb(u_0)>0$ and a bi-Lipschitz homeomorphism
$$
M: B(\eb,0,\R^N)\to\mathcal M\cap\mathcal V_{u_0}.
$$
As elementary examples show, these two definitions are {\it not} equivalent (actually, the second one is weaker than the first one), so the choice of the proper definition becomes important. Our choice of the first definition is motivated by the following two reasons: 1) it naturally generalizes the concept of a submanifold from the smooth to Lipschitz cases and, to the best of our knowledge, all known constructions of inertial manifolds automatically give the structure \eqref{4.local}; 2) we do not know whether or not the key statement about the finite-dimensionality of the dynamics on the attractor embedded into the finite-dimensional Lipschitz manifold holds without the assumption \eqref{4.lip}, see below.
\end{remark}
Let now $\mathcal A\subset \Phi$ be an attractor of the dissipative semigroup $S(t)$ generated by an abstract semilinear parabolic equation \eqref{0.abs}.
\begin{definition}\label{Def3.fin} We say that the dynamics generated by $S(t)$ on the attractor possesses a Lipschitz continuous inertial form if the following conditions are satisfied:
\par
1) There exist $N>0$ and an injective Lipschitz map $I:\mathcal A\to\R^N$ such that $I^{-1}:\bar{\mathcal A}:=I(\mathcal A)\to\mathcal A$ is also Lipschitz continuous.
\par
2) There exists a Lipschitz continuous vector field $G$ on $\bar{\mathcal A}\subset\R^N$ such that the projected semigroup $\bar S(t):=I\circ S(t)\circ I^{-1}$ on $\bar{\mathcal A}$ is a solution semigroup of the following ODEs:
\begin{equation}\label{3.ODE}
\frac d{dt}U=G(U),\ \ U\big|_{t=0}=I(u_0),\ \ u_0\in\mathcal A.
\end{equation}
This system of ODEs is referred  then as an initial form associated with \eqref{0.abs}.
\end{definition}
We give below only several known facts on such inertial forms which are crucial for our purposes, more details can be found in \cites{rom-th,rom-th1}.
\begin{proposition}\label{Prop3.gr} Under the above assumptions the Lipschitz continuous inertial form exists if and only if the semigroup $S(t)$ restricted to the global attractor $\mathcal A$ can be extended for negative times to a Lipschitz continuous group $\{S(t),\ t\in\R\}$. Moreover, then the spectral projector $P_N$ can be used as a map $I$ for sufficiently large $N$.
\end{proposition}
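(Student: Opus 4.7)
The plan is to prove the two implications separately and then verify that the spectral projector $P_N$ provides an admissible bi-Lipschitz coordinate map for sufficiently large $N$.

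For the forward implication, suppose the Lipschitz inertial form exists, given by the bi-Lipschitz injection $I:\mathcal A\to\bar{\mathcal A}\subset\R^N$ and the Lipschitz vector field $G$ on $\bar{\mathcal A}$. First I would apply the Kirszbraun (or McShane) extension theorem to produce a globally Lipschitz vector field $\widetilde G:\R^N\to\R^N$ that agrees with $G$ on $\bar{\mathcal A}$. The Cauchy--Lipschitz theorem then yields a Lipschitz group $\{\bar S(t)\}_{t\in\R}$ on $\R^N$ solving $\frac{d}{dt}U=\widetilde G(U)$; because $\bar{\mathcal A}$ is invariant under $\bar S(t)$ for $t\ge 0$ (as $\mathcal A$ is the attractor) and $\bar S(t)$ is a group, $\bar{\mathcal A}$ is invariant for all $t\in\R$. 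Transporting back, $S(t):=I^{-1}\circ\bar S(t)\circ I$ gives the required Lipschitz extension of the original semigroup to a group on $\mathcal A$ for all $t\in\R$.

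For the reverse implication, assume that $\{S(t)\}_{t\in\R}$ is a Lipschitz group on $\mathcal A$ with Lipschitz constants $L_t$. From the group identity $S(-nt_0)=S(-t_0)^n$ one immediately obtains $L_{-nt_0}\le L_{-t_0}^n$, so the exponential growth rate $\mu:=\limsup_{t\to+\infty}t^{-1}\ln L_{-t}$ is finite. Given any pair of complete trajectories $u_i(t)=S(t)u_i\subset\mathcal A$ and their difference $v=u_1-u_2$, the linear equation
\begin{equation*}
\Dt v+Av=F(u_1(t))-F(u_2(t))
\end{equation*}
together with the global Lipschitz constant $L$ of $F:\Phi\to H$ puts us in the setting of the classical squeezing/cone dichotomy. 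More precisely, introducing
\begin{equation*}
V(\xi):=\|Q_N\xi\|_\Phi^2-\|P_N\xi\|_\Phi^2,
\end{equation*}
a direct differentiation using that the smallest eigenvalue of $A$ on $Q_N\Phi$ is $\lambda_{N+1}$ shows that whenever $V(v(T))>0$, the norm $\|v(t)\|_\Phi$ grows backwards in time at rate at least $\exp((\lambda_{N+1}-CL)(T-t))$. Choosing $N$ so large that $\lambda_{N+1}-CL>\mu$ (possible since $\lambda_{N+1}\to\infty$ for $A=1-\Dx$), and combining with the a priori Lipschitz bound $\|v(t)\|_\Phi\le L_{-(T-t)}\|v(T)\|_\Phi$, this alternative is excluded. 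Hence $V(v(t))\le 0$ identically on $\R$. In particular $P_N(u_1-u_2)=0$ implies $u_1=u_2$, so $P_N|_{\mathcal A}$ is injective, and the cone inclusion yields the two-sided bound
\begin{equation*}
\|P_N(u_1-u_2)\|_\Phi\le\|u_1-u_2\|_\Phi\le\sqrt{2}\,\|P_N(u_1-u_2)\|_\Phi,\qquad u_1,u_2\in\mathcal A,
\end{equation*}
so that $I:=P_N$ is a bi-Lipschitz homeomorphism from $\mathcal A$ onto $\bar{\mathcal A}:=P_N\mathcal A$. The inertial form itself is then obtained by projecting \eqref{0.abs}: for $u(t)\subset\mathcal A$ and $U(t):=P_Nu(t)$ one has $\frac{d}{dt}U+AU=P_NF(P_N^{-1}U)$, and the right-hand side $G(U):=-AU+P_NF(P_N^{-1}U)$, viewed on the finite-dimensional subspace $P_N\Phi$, is Lipschitz as a composition of the bi-Lipschitz map $P_N^{-1}:\bar{\mathcal A}\to\mathcal A\hookrightarrow\Phi$, the Lipschitz nonlinearity $F$, the bounded projector $P_N$, and the bounded (on $P_N\Phi$) operator $A$.

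The main obstacle is the quantitative squeezing step in the reverse direction, namely verifying that $\lambda_{N+1}$ can be chosen to dominate the intrinsic backward expansion rate $\mu$ of the group on $\mathcal A$. The key observation is that the Lipschitz group hypothesis automatically delivers the finiteness of $\mu$ via the semigroup identity, reducing the problem to the standard cone computation plus the trivial spectral growth $\lambda_{N+1}\to\infty$. Once this is in hand, the construction of the ODE on $\bar{\mathcal A}$ and the verification that $P_N$ serves as $I$ become routine consequences.
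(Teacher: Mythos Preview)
The paper itself does not prove this proposition: the forward implication is called obvious and the reverse one is deferred wholesale to Romanov \cite{rom-th1}. Your forward implication (Kirszbraun extension of $G$ plus strict invariance $S(t)\mathcal A=\mathcal A$) is correct and in fact more explicit than the paper's one-line remark. For the reverse implication your overall strategy---extract a finite backward exponential rate $\mu$ from the group hypothesis and compare it with the decay rate of the high modes---is exactly Romanov's, but your cone-dichotomy implementation has a genuine gap in the regime that matters here, namely $F:\Phi=D(A^{1/2})\to H$ (i.e.\ $\beta=1$ in the paper's notation). Your claim that $V(v(T))>0$ forces $\|v(t)\|_\Phi$ to grow backward at rate $\lambda_{N+1}-CL$ tacitly assumes that the set $\{V>0\}$ is backward invariant along difference trajectories, equivalently that $\{V\le0\}$ is forward invariant. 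But forward invariance of this cone is precisely the spectral gap condition \eqref{0.sg}; for $\beta=1$ and $\lambda_n\sim n^2$ the quotient $(\lambda_{N+1}-\lambda_N)/(\lambda_{N+1}^{1/2}+\lambda_N^{1/2})$ stays bounded, so for Lipschitz constants $L$ above this bound the cone is \emph{not} invariant, $v(t)$ may cross $\{V=0\}$ repeatedly, and the backward-growth conclusion is unjustified.

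The argument the paper cites avoids cones altogether. One writes the Duhamel formula on the full backward half-line,
\[
Q_Nv(0)=\int_{-\infty}^{0}e^{As}\,Q_N\bigl(F(u_1(s))-F(u_2(s))\bigr)\,ds,
\]
which is legitimate once $\lambda_{N+1}>2\mu$ because the backward Lipschitz bound $\|v(s)\|_\Phi\le Ce^{\mu|s|}\|v(0)\|_\Phi$ together with the kernel estimate $\|A^{1/2}e^{As}Q_N\|_{\mathcal L(H,H)}\le C|s|^{-1/2}e^{-\lambda_{N+1}|s|/2}$ makes the integral absolutely convergent. A direct bound then gives $\|Q_Nv(0)\|_\Phi\le CL(\lambda_{N+1}-2\mu)^{-1/2}\|v(0)\|_\Phi$, which drops below $\tfrac{1}{\sqrt2}\|v(0)\|_\Phi$ for $N$ large, and your two-sided estimate for $P_N$ follows without any cone invariance. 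With this Duhamel step substituted for your cone step, the rest of your construction of the vector field $G$ goes through unchanged.
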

Indeed, in one side the statement is obvious since Lipschitz vector field in $\R^N$ generate Lipschitz continuous solution groups. In the opposite side it is a bit more delicate and require some efforts, see \cite{rom-th1}.
\begin{proposition}\label{Prop3.emb} Under the above assumptions the Lipschitz continuous inertial form exists if and only if there exists a finite-dimensional Lipschitz submanifold (not necessarily invariant) containing the global attractor $\mathcal A$.
\end{proposition}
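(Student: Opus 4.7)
The plan is to establish the two implications separately, reducing each to Proposition \ref{Prop3.gr}.

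For \emph{inertial form implies Lipschitz manifold containing} $\mathcal A$: by Proposition \ref{Prop3.gr}, for $N$ large enough the spectral projector $P_N$ restricts to a bi-Lipschitz homeomorphism between $\mathcal A$ and $P_N\mathcal A \subset P_N\Phi$, so the map $\Psi := Q_N\circ (P_N|_{\mathcal A})^{-1}$ from $P_N\mathcal A$ to $Q_N\Phi$ is Lipschitz continuous. By Kirszbraun's extension theorem for Hilbert-valued Lipschitz maps, $\Psi$ extends to a Lipschitz map $\tilde\Psi : P_N\Phi \to Q_N\Phi$. Then $\mathcal M := \{p + \tilde\Psi(p) : p \in P_N\Phi\}$ is a finite-dimensional Lipschitz submanifold of $\Phi$ in the sense of Definition \ref{Def3.man} (admitting the single global chart $\mathcal P_{u_0} := P_N$, $M_{u_0} := \tilde\Psi$ for every $u_0 \in \mathcal M$) and contains $\mathcal A$ by construction.

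For the converse, \emph{Lipschitz manifold implies inertial form}: by Proposition \ref{Prop3.gr} it suffices to extend $S(t)\big|_{\mathcal A}$ to a Lipschitz continuous group. First, compactness of $\mathcal A$ allows extraction of a finite subcover $\mathcal V_1,\ldots,\mathcal V_k$ from $\{\mathcal V_{u_0}\}_{u_0\in\mathcal A}$ with finite-rank projectors $\mathcal P_i$, chart maps $M_i$ and a common Lipschitz bound $L$; a Lebesgue-number argument produces $\delta>0$ such that any $u,v\in\mathcal A$ with $\|u-v\|_\Phi<\delta$ lie in a common $\mathcal V_i$. Next, I would choose the spectral projector $P_N$ with $N$ large enough that $\operatorname{range}(\mathcal P_i)\subset P_N\Phi$ for every $i$ and that each local graph can be reparametrised as a Lipschitz graph $Q_N u = \widetilde M_i(P_N u)$ over $P_N\Phi$, giving a uniform estimate $\|u-v\|_\Phi \le L'\|P_N(u-v)\|_\Phi$ on $\mathcal A\cap\mathcal V_i$. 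With this global projector in hand, for any two complete trajectories $u_1(t),u_2(t)\in\mathcal A$ that stay close enough to remain in a common chart, the projected trajectories $p_j(t):=P_N u_j(t)$ satisfy a finite-dimensional ODE with right-hand side $P_N[-A(p+\widetilde M_i(p))+F(p+\widetilde M_i(p))]$ that is Lipschitz in $p$. Finite-dimensional Lipschitz ODE theory then yields forward and backward uniqueness together with a Lipschitz estimate for $p_1-p_2$, which combined with $\|u_1-u_2\|_\Phi\le L'\|p_1-p_2\|_\Phi$ upgrades to a local Lipschitz estimate on full trajectories; gluing along the finite cover produces a global Lipschitz group on $\mathcal A$, and Proposition \ref{Prop3.gr} then delivers the inertial form.

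The main obstacle is the transition from the local projectors $\mathcal P_{u_0}$ — which may depend on $u_0$ in Definition \ref{Def3.man} — to a \emph{single} global projector $P_N$ with a uniform local Lipschitz estimate, which will require a Lipschitz implicit-function argument showing that the Lipschitz graph structure is stable under moderate changes of the projector. After this step, the finite-dimensional Lipschitz ODE reduction, the backwards uniqueness needed to extend $S(t)$ to $t<0$, and the gluing across the finite cover are routine and rely only on compactness of $\mathcal A$ and continuity of trajectories.
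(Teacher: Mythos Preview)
The paper does not actually prove this proposition: it simply cites Romanov \cite{rom-th}, Theorem~1.5, noting that the $C^1$ hypothesis there is only used to secure the local estimate \eqref{4.lip}, which is already built into Definition~\ref{Def3.man}. So there is no ``paper proof'' to compare against beyond that remark; your sketch is essentially an attempt to reconstruct Romanov's argument.

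Your forward direction is fine and in fact cleaner than what one usually sees: once $P_N$ is bi-Lipschitz on $\mathcal A$, Kirszbraun gives a global Lipschitz graph over $P_N\Phi$ in one stroke.

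In the reverse direction your outline is the right one, but one intermediate claim is false as written: you cannot in general choose $N$ so that $\operatorname{range}(\mathcal P_i)\subset P_N\Phi$, since the $\mathcal P_i$ are arbitrary finite-rank projectors and their ranges need not lie in any finite Fourier block. What is true, and what you actually need, is that $P_N\to\operatorname{Id}$ in operator norm on each finite-dimensional space $\operatorname{range}(\mathcal P_i)$, so that for $N$ large the composition $P_N$ is injective on each local graph and the graph can be re-expressed over $P_N\Phi$ with a controlled Lipschitz constant. You correctly flag this as the ``main obstacle'' and name the right tool (a Lipschitz implicit-function/graph-stability argument); just be aware that the range-inclusion shortcut does not work and the perturbation argument is genuinely required. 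Once that is done, your finite-dimensional ODE reduction is legitimate because the trajectories you consider lie on $\mathcal A\subset\mathcal M$ (even though $\mathcal M$ itself need not be invariant), $P_N$ commutes with $A$ so the unbounded part $P_N A(p+\widetilde M_i(p))=Ap$ causes no trouble, and the nonlinear part is Lipschitz by the assumptions on $F$; backward uniqueness and the Lipschitz group extension then follow, and Proposition~\ref{Prop3.gr} closes the argument.
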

This statement is  proved in \cite{rom-th}[Theorem 1.5] (actually, the existence of an inertial form is verified there under the extra assumption that the manifold is $C^1$-smooth, but this fact is used {\it only} in order to obtain estimate \eqref{4.lip} which is incorporated in our case into the definition of the Lipschitz submanifold, see Remark \ref{Rem3.alt}).
\par
We are now ready to state and prove the second main result of the paper on the non-existence of IMs for systems  of RDAs.

\begin{theorem}\label{Th4.main} There exists an example of the RDA system \eqref{eq_1D} with the number of equations $m=8$ and the nonlinearities $f$ and $g$ satisfying the assumptions of Theorem \ref{dis} such that the associated global attractor is not a subset of any finite-dimensional Lipschitz continuous submanifold of the phase space $\Phi$. In particular, this equation does not possess an inertial manifold.
\end{theorem}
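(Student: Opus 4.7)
The plan is to promote the linear non-autonomous example of Theorem~\ref{TH2} into an autonomous RDA system of size $m=8$ by realizing its $2T$-periodic coefficients $f(t,x),g(t,x)$ as values of smooth functions along a specific invariant orbit of an auxiliary autonomous subsystem, and then to exploit the super-exponential contraction \eqref{dec} to defeat the existence of any finite-dimensional Lipschitz submanifold containing the attractor. The structural fact that makes this reduction possible is Remark~\ref{Rem4.cru}: one can write $f(t,x)=\bar f(p(t),\phi(x),\psi(x))$ and $g(t,x)=\bar g(p(t),\phi(x),\psi(x))$ for smooth $\bar f,\bar g$ linear in the last two arguments, where $p(t)=\sin(\pi t/T)$, $\phi(x)=\cos x$, $\psi(x)=\sin x$. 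The linear dependence on $(\phi,\psi)$ is essential, since it allows these ``parameters'' to become state variables of an RDA system without generating higher-order products with $\partial_x{\rm u}$ or ${\rm u}$.

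To produce $p(t)=\sin(\pi t/T)$ autonomously I would adjoin two spatially independent reaction-diffusion equations for $(p,q)$ of Hopf type (say, $\partial_t p-\partial_x^2 p=p(1-p^2-q^2)-(\pi/T)q$ and symmetrically for $q$), which admit a stable temporally $2T$-periodic limit cycle $p^2+q^2=1$. To produce $(\phi,\psi)=(\cos x,\sin x)$ I would adjoin two further equations of Ginzburg--Landau type, e.g.\ $\partial_t\phi-\partial_x^2\phi-\phi+(\phi^2+\psi^2-1)\phi=0$ and the analogous equation for $\psi$, for which the manifold $\{\phi^2+\psi^2\equiv 1\}$ is invariant and contains $(\cos x,\sin x)$ as an equilibrium. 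Coupling these four auxiliary real components to the four real components of ${\rm u}=(v_{\rm Re},v_{\rm Im},u_{\rm Re},u_{\rm Im})$ through $\bar f,\bar g$ in the form \eqref{eq_1D}, but with the coupling only one-way (no feedback from ${\rm u}$ into $(p,q,\phi,\psi)$), yields an autonomous RDA system with $m=4+2+2=8$ real components. After the standard cut-off of Remark~\ref{Rem2.attr} this system satisfies the hypotheses of Theorems~\ref{dis} and~\ref{Th2.attr}, so a global attractor $\mathcal A\subset\Phi$ exists.

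Inside $\mathcal A$ I would identify the invariant set $\Gamma\times\{0\}$, where $\Gamma$ is the $2T$-periodic orbit obtained by fixing $p^2+q^2=1$ and $(\phi,\psi)=(\cos x,\sin x)$, and $\{0\}$ refers to the ${\rm u}$-fiber. Along this orbit the equation for the ${\rm u}$-fiber is precisely \eqref{au} with the coefficients constructed in Theorem~\ref{TH2}, so for any two trajectories $z_1(t),z_2(t)\in\mathcal A$ whose $(p,q,\phi,\psi)$-projections coincide and lie on $\Gamma$ but whose ${\rm u}$-components differ, estimate \eqref{dec} yields
\begin{equation}\label{4.superdec}
\|z_1(t)-z_2(t)\|_\Phi\le Ce^{-\gamma t^3}\|z_1(0)-z_2(0)\|_\Phi,\qquad t\ge 0.
\end{equation}

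The contradiction now comes from the Lipschitz-manifold framework. If $\mathcal A$ were contained in a finite-dimensional Lipschitz submanifold of $\Phi$, then by Proposition~\ref{Prop3.emb} the dynamics on $\mathcal A$ would admit a Lipschitz continuous inertial form, and by Proposition~\ref{Prop3.gr} the restriction of $S(t)$ to $\mathcal A$ would extend to a Lipschitz continuous group $\{S(t)\}_{t\in\R}$; since the associated vector field $G$ on $I(\mathcal A)\subset\R^N$ is Lipschitz, Gronwall's inequality applied to $S(-t)$ forces the backward bound $\|S(-t)w_1-S(-t)w_2\|_\Phi\le e^{Lt}\|w_1-w_2\|_\Phi$, which rewritten in terms of forward orbits reads
\begin{equation*}
\|z_1(t)-z_2(t)\|_\Phi\ge e^{-Lt}\|z_1(0)-z_2(0)\|_\Phi,\qquad t\ge 0,
\end{equation*}
and this is incompatible with \eqref{4.superdec} for $t$ large enough. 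The main obstacle in executing the plan is the auxiliary construction itself: one has to verify that the proposed coupled system admits $\Gamma\times\{0\}$ as a subset of its attractor and that the effective linearization of the ${\rm u}$-equation along this set reproduces \eqref{au} exactly, which requires care with the symmetries and stability of the Ginzburg--Landau block and with the cut-off used to enforce the global assumptions of Theorem~\ref{dis}.
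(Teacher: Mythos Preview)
Your overall architecture---realize the $2T$-periodic coefficients as values along a limit cycle of an auxiliary autonomous subsystem, keep the skew-product one-way, and then derive a contradiction from Propositions~\ref{Prop3.gr} and~\ref{Prop3.emb}---is exactly the paper's approach, and your count $m=2+2+4=8$ matches.

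There is, however, a genuine gap in the step where you claim to have \emph{two} distinct trajectories $z_1,z_2\in\mathcal A$ over $\Gamma$. As you set things up, the ${\rm u}$-equation over $\Gamma$ is precisely the \emph{linear} equation~\eqref{au}. But the period map $P$ of~\eqref{au} satisfies $\|P^n\|\to 0$; hence if ${\rm u}(t)$ is a complete bounded trajectory of~\eqref{au} on $\R$, then for every $n$ one has $\|{\rm u}(0)\|=\|P^n{\rm u}(-2nT)\|\le \|P^n\|\sup_{t}\|{\rm u}(t)\|\to 0$, so ${\rm u}\equiv 0$. In other words, the fiber of the attractor over $\Gamma$ is the single point $\{0\}$, and there is no second trajectory to compare with. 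The ``standard cut-off of Remark~\ref{Rem2.attr}'' does not help here: it trims the nonlinearities at infinity but does not create any nontrivial bounded orbits in the ${\rm u}$-fiber.

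The paper repairs this by making the ${\rm u}$-equation \emph{nonlinear away from the origin} while keeping it linear near ${\rm u}=0$: one multiplies the $f,g$-terms by a cut-off $\phi(|{\rm u}|^2)$ supported near $0$ and adds a Ginzburg--Landau term $(1-\phi(|{\rm u}|^2))({\rm u}-{\rm u}|{\rm u}|^2)$. This produces a sphere $\{|{\rm u}|=1\}$ of equilibria, so the (connected) attractor of the resulting non-autonomous ${\rm u}$-problem contains both $0$ and this sphere; since $0$ is locally asymptotically stable, connectedness forces the existence of a nontrivial complete bounded trajectory ${\rm u}_2(t)\to 0$. Because the equation is unchanged near $0$, Theorem~\ref{TH2} still gives $\|{\rm u}_2(t)\|\le Ce^{-\gamma t^3}$, and one then takes $z_1=(\Gamma,0)$, $z_2=(\Gamma,{\rm u}_2)$. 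With this modification your argument goes through; without it, the needed pair of attractor trajectories simply does not exist.
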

\begin{proof} Our strategy is the following: to verify the non-existence, we will find two trajectories ${\rm u}_1(t)$ and ${\rm u}_2(t)$ belonging to the attractor $\mathcal A$ such that
\begin{equation}\label{3.fast}
\|{\rm u}_1(t)-{\rm u}_2(t)\|_{\Phi}\le Ce^{-\gamma t^3}.
\end{equation}
The existence of such trajectories does not allow to extend the solution semigroup $S(t)$ on the attractor to a Lipschitz continuous group and, thanks to Proposition \ref{Prop3.gr}, the associated Lipschitz inertial form does not exist. Then, applying Proposition \ref{Prop3.emb}, we see that the embedding of the attractor to any Lipschitz submanifold is also impossible. Thus, it only remains to find the trajectories satisfying \eqref{3.fast}.
\par
We construct the desired example based on the counterexample given in Theorem \ref{TH2} using \eqref{4.fg} and interpreting the functions $y(t)=\sin\frac{\pi t}T$, $y_1(x)=e^{ix}$ as particular solutions of extra RDA equations. However, to fulfill the other assumptions, we need to modify slightly equations \eqref{au}. Namely, let us introduce a cut-off function  $\phi(\xi)$ such that $\phi(\xi) = 1$ for $|\xi|\le 1/4$ and $\phi(\xi) = 0$ for $|\xi|\ge 1/2$ and consider the RDA system
\begin{equation}\label{3.int}
\Dt{\rm u}=\Dx {\rm u}+\phi(|{\rm u}|^2)(f(t,x)\Nx {\rm u}+g(t,x){\rm u})+(1-\phi(|{\rm u}|^2))({\rm u}-{\rm u}|\rm u|^2),
\end{equation}
where $f$ and $g$ are exactly the same as in Theorem \ref{TH2}. Then, on the one hand, this system remains linear near the origin ${\rm u}=0$, so ${\rm u}=0$ is a super exponentially attracting equilibrium. On the other hand, the presence of the nonlinearity of a Ginzburg-Landau type makes the system dissipative and produces extra equilibria filling the sphere $|{\rm u}|=1$.
\par
Let $P:=U(2T,0):\Phi\to\Phi$ be the period map generated by the nonlinear equation \eqref{3.int}. Then, since \eqref{3.int} is time-periodic, $U(2nT,0)=P^n$ and the dynamics of \eqref{3.int} is determined by the discrete semigroup $S_{per}(n):=P^n$ generated by the iterations of the map $P$. In particular, as not difficult to see arguing as in Theorem \ref{dis}, this semigroup possesses a global attractor $\mathcal A_{tr}$ which is a compact connected set in the phase space $\Phi$. Obviously, the attractor contains all equilibria
$$
\{0\}\cup\{{\rm u}\in\R^4,\ |{\rm u}|=1\}\subset \mathcal A_{per}.
$$
Furthermore, since $0$ is locally asymptotically stable and the attractor is connected, there exists a non-trivial complete bounded trajectory ${\bar u}_2(t)$, $t\in\R$ such that ${\rm u}_2(t)\to0$ as $t\to\infty$. Finally, since \eqref{3.int} coincides with \eqref{au} in the neighbourhood of zero, from Theorem \ref{TH2} we conclude that
\begin{equation}\label{3.super}
\|{\rm u}_1(t)-{\rm u}_2(t)\|_\Phi\le Ce^{-t^3},\ \ {\rm u_1}(t)\equiv 0
\end{equation}
(here we have implicitly used the smoothing property in order to obtain the attraction in the norm of $\Phi$).
\par
We are now ready to embed system \eqref{3.int} into a large autonomous and spatially homogeneous system of RDA equations. To this end, we note that functions $y(t,x):=e^{\pi it/T}$ and $z(t,x)=e^{ix}$
solve the semilinear heat equations
\begin{equation}
\Dt y=\Dx y+\frac{\pi i}Ty+y(1-|y|^2),\ \ \Dt z=\Dx z+z(2-|z|^2)
\end{equation}
respectively, so we may introduce the extended system
\begin{equation}\label{3.fin}
\begin{cases}
\Dt y=\Dx y+\frac{\pi i}Ty+y(1-|y|^2),\\
\Dt z=\Dx z+z(2-|z|^2),\\
\Dt{\rm u}=\Dx {\rm u}+\phi(|{\rm u}|^2)(f(\Imm y,\Ree z,\Imm z)\Nx {\rm u}+\\\text{\phantom{eggogeggogeggogeggog}}+g(\Imm y,\Ree z,\Imm z){\rm u})+(1-\phi(|{\rm u}|^2))({\rm u}-{\rm u}|\rm u|^2).
\end{cases}
\end{equation}
The number of equations in this system is $2+2+4=8$.
Then, as not difficult to see, the system \eqref{3.fin} of RDA equations is dissipative and possesses a global attractor $\Cal A$ in the phase space $\Phi$. On the other hand, by the construction, the trajectories ${\rm U}_1(t):=(e^{\pi i T},e^{ix},{\rm u}_1(t))$ and ${\rm U}_2(t):=(e^{\pi i/T},e^{ix},{\rm u}_2(t))$, $t\in\R$, solve these equations. Moreover, since these are complete bounded trajectories, they belong to the global attractor $\mathcal A$:
\begin{equation}
{\rm U}_1(t),{\rm U}_2(t)\in\mathcal A,\ \ t\in\R
\end{equation}
Finally,
$$
\|{\rm U}_1(t)-{\rm U}_2(t)\|_{\Phi}=\|{\rm u}_1(t)-{\rm u}_2(t)\|_{\Phi}\le Ce^{-\gamma t^3},
$$
so the Lipschitz extension of $S(t)$ on the attractor for negative times does not exists and the attractor $\mathcal A$ is not a subset of any Lipschitz finite-dimensional submanifold of $\Phi$.
\par
It only remains to note that, although the constructed nonlinearities formally do not satisfy the assumptions of Theorem \ref{dis} since they do not have finite supports, but this can be easily corrected by cutting of the nonlinearities outside of a large ball. Thus, the theorem is proved.
\end{proof}
\begin{remark}\label{Rem3.fin} The obtained counterexample excludes the embeddings of the global attractor in Lipschitz submanifolds, but does not forbid the existence of log-Lipschitz inertial forms and related embeddings to log-Lipschitz manifolds which are of big current interest, see \cite{28} and references therein. However, the constructed counterexample to the Floquet theory is the key point of the proof of non-existence of such forms given in \cite{EKZ} for the case of abstract parabolic equations, so we expect that the analogous counterexample could be extended in a straightforward way to the case of RDA equations. Since the construction given in \cite{EKZ} is rather technical we decided not to present it here.
\end{remark}

\section{Concluding remarks}\label{s5}

In this concluding section, we briefly discuss possible generalizations of the obtained results. We start with particular cases of systems where the IM still exists.

\subsection{Vector case and existence of IMs} As the constructed in Theorem \ref{Th4.main} counterexample shows, we cannot expect the existence of IMs under general assumptions on the nonlinearity $f$. However, this a priori does not exclude the existence of IMs if the matrix $f$ has some specific structure. In particular, it will be so if the matrix $f(u)$ has a diagonal structure with only one non-zero entry on the diagonal:
\begin{equation}\label{5.diag}
f(u)=\operatorname{diag}(f_1(u),\cdots,f_m(u))
\end{equation}
and
\begin{equation}\label{5.one}
f_i(u)=\psi(u)\delta_{ij}
\end{equation}
for some $j\in\{1,\cdots, m\}$. It worth emphasizing that, in contrast to the previous section all functions are real-valued here.
\par
\begin{proposition}\label{Prop5.single} Let the assumptions of Theorem  \ref{dis} holds and let, in addition, the nonlinearity $f$ satisfy \eqref{5.diag} and \eqref{5.one}. Then problem \eqref{eq_1D} possesses an IM in the phase space $\Phi$.
\end{proposition}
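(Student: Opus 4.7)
The strategy is to reduce to the scalar construction of Sections \ref{s1}--\ref{s3} by exploiting the fact that under \eqref{5.diag}--\eqref{5.one} the advection term $f(u)\partial_x u$ equals $\psi(u)\partial_x u_j\,e_j$ and hence only acts on the single $j$-th component. First I would define $W:\Phi\to\Phi$ to be the identity on the components $u_i$, $i\ne j$, and to replace $u_j$ by $w_j:=a(u)^{-1}u_j$, where $a(u)\in C^\infty_{per}$ solves
$$
\frac{d}{dx}a=\frac12\bigl[\psi(P_K u)-\<\psi(P_K u)\>\bigr]a,\qquad a|_{x=-\pi}=1,
$$
exactly as in \eqref{1.dir}, but with $\psi$ now depending on the full vector $u$. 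The content of Lemma \ref{bau} carries over verbatim, since its proof only uses that $\psi\in C_0^\infty$ and that $P_K u$ is uniformly smooth in $x$ for $u\in H^1_{per}$; the map $u\mapsto a(u)$ is $C^\infty(\Phi,W^{1,\infty}_{per})$ with $K$-independent Frechet-derivative bounds.

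The invertibility of $W$ would then follow by repeating Section \ref{s1}. Given $w=(w_1,\dots,w_m)$, the recovery of $u$ reduces to the fixed-point problem
$$
\frac{d}{dx}a=\frac12\bigl[\psi\bigl(P_K\tilde u(w,a)\bigr)-\<\psi(P_K\tilde u(w,a))\>\bigr]a,\quad a|_{x=-\pi}=1,
$$
where $\tilde u(w,a)$ has $aw_j$ in its $j$-th slot and $w_i$ elsewhere. This is essentially a scalar problem depending parametrically on the components $w_i$, $i\ne j$, and the Schauder argument of Lemma \ref{Lem1.sol} together with the linearisation/contraction estimates of Lemmas \ref{Lem1.linK}--\ref{Lem1.baw} apply with no structural change. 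I therefore expect the vector analogue of Theorem \ref{Th1.main}: $W$ is a $C^\infty$-diffeomorphism of $B(R,0,\Phi)$ onto its image, with $K$-independent norm bounds on $W$, $U=W^{-1}$ and their derivatives, provided $K>K_0(R)$.

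Writing out the transformed system, the equations for $i\ne j$ retain their original form
$$
\partial_t w_i-\partial_x^2 w_i+w_i+g_i(u(w))=0,
$$
and the nonlinearity $w\mapsto g_i(u(w))$ is $C^\infty(\Phi,\Phi)$ (uniformly in $K$) because $g\in C_0^\infty$ and $u$ depends $C^\infty$-smoothly on $w$. The $j$-th equation is the exact analogue of \eqref{1D_fin},
$$
\partial_t w_j-\partial_x^2 w_j+w_j+\<\psi(P_K(a w))\>\partial_x w_j=F_1(w)+F_2(w),
$$
with $F_1,F_2$ of the form \eqref{2.F1}--\eqref{2.F2}, so the estimates of Theorem \ref{T_F_1,F_2} transfer unchanged. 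After applying the cut-off of Section \ref{s3} and redoing the cone computation on the invariant set $\Bbb B_\kappa$, the only new contributions arise from the smooth maps $g_i(u(\cdot)):\Phi\to\Phi$; these are absorbed in the cone inequality \eqref{3.h} exactly as $\mathcal F_2$ was, because they contain no first-order derivatives. Crucially, the non-small term $\Theta(w)\partial_x w_j$ still affects only the $j$-th component, so the integration-by-parts identity that annihilated it in \eqref{h} remains valid; choosing $K$ large first (to absorb $\mathcal F_1$) and then $N$ sufficiently large delivers the strong cone property, and Theorem \ref{Th3.IM} yields the inertial manifold.

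The main obstacle I foresee is ensuring that the inverse map $U$ remains well-defined and smooth when $\psi$ depends on the whole vector $u$ rather than on the scalar $u_j$ alone: this couples the $j$-th component to all others through a single fixed-point problem for $a$, and one has to verify that Lemma \ref{Lem1.linK} survives the extra parametric dependence on $(w_i)_{i\ne j}$. Since this dependence enters only through the smoothing operator $P_K$, the contraction factor $CK^{-1/2}$ appearing in \eqref{1.ei} still controls the Newton iteration, and the remainder of the argument is essentially bookkeeping.
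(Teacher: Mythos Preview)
Your proposal is correct and follows exactly the approach the paper indicates: transform only the $j$-th component via $u_j=a(t,x)w_j$ with $a$ determined by the scalar ODE \eqref{1.dir} (with $\psi$ in place of $f$), leave the remaining components untouched, and then rerun the cone argument of Section~\ref{s3}. The paper's own proof is a two-sentence sketch to this effect, and your write-up supplies the details faithfully; the one notational slip---writing $\psi(P_K(aw))$ in the $j$-th equation where you mean $\psi(P_K\tilde u(w,a))$---is harmless since you defined $\tilde u$ correctly earlier.
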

Indeed, in this case we need to transform only one component of $u=(u_1,\cdots,u_m)$ via $u_j(t,x)=a(t,x)w_j(t,x)$ and we will have a scalar equation on the factor $a$ which can be solved exactly as in Section \ref{s1}. So, the IM can be constructed exactly as in the case of a scalar equation considered above.
\par
Analogously to the case of Dirichlet or Neumann boundary conditions, this simple observation allows us to treat the case of scalar quasilinear equation
\begin{equation}\label{5.quasi}
\Dt u=\Dx u+f(u,\Nx u)
\end{equation}
with periodic boundary conditions. Indeed, differentiating this equation by $x$ and denoting $v=\Nx u$, we end up with a system of RDA equations
\begin{equation}\label{5.full}
\begin{cases}
\Dt u=\Dx u +f(u,v),\\
\Dt v=\Dx v+f'_u(u,v)v+f'_{v}(u,v)\Nx v
\end{cases}
\end{equation}
which satisfies assumptions of Proposition \ref{Prop5.single} and, therefore, possesses an IM.

\begin{remark}\label{Rem5.un1} Note that in the constructed counterexample only {\it two} components of the matrix $f(u)$ are non-zero and this is enough to destroy the existence of IMs, so the assumptions of Proposition \ref{Prop5.single} are in a sense sharp. Note also that this nonlinearity will satisfy assumptions \eqref{5.diag} and \eqref{5.one} if we allow the components of $u$ to be complex-valued, so the assumption that $u$ is real-valued is crucial for the validity of Proposition \ref{Prop5.single}.
\end{remark}
\subsection{Mixed Dirichlet-Neumann boundary conditions} As shown in the first part of this work (see \cite{KZI}), an IM exists for systems of RDAs in the case of Dirichlet boundary conditions as well as for Neumann boundary conditions. Surprisingly, it may be not the case if some components of the vector $u=(u_1,\cdots, u_m)$ are endowed by the Dirichlet and the rest by the Neumann boundary conditions. To see this we start with the counterexample constructed in Theorem \ref{TH2} for periodic boundary conditions  which we write here as
\begin{equation}\label{5.U}
\Dt{\rm U}=\Dx {\rm U}+f({\rm U})\Nx {\rm U}+g({\rm U})
\end{equation}
and introduce the functions
$$
{\rm U}_{alt}(t,x):={\rm U}(t,x)-{\rm U}(t,2\pi-x),\ \ {\rm U}_{sym}={\rm U}(t,x)+{\rm U}(t,2\pi-x).
$$
Then, obviously, the functions ${\rm U}_{alt}$ and ${\rm U}_{sym}$ satisfy the Dirichlet and Neumann boundary conditions respectively and
$$
{\rm U}(t)=\frac12({\rm U}_{alt}(t)+{\rm U}_{sym}(t)).
$$
On the other hand,
$$
\Dt {\rm U}(t,2\pi-x)=\Dx{\rm U}(t,2\pi-x)-f({\rm U}(t,2\pi-x))\Nx{\rm U}(t,2\pi-x)+g({\rm U}(t,2\pi-x)).
$$
Taking a sum and a difference of this equation and equation \eqref{5.U}, we end up with the following equations
\begin{multline}
\Dt {\rm U}_{sym}=\Dx {\rm U}_{sym}+\\+\frac12\(f(({\rm U}_{alt}+{\rm U}_{sym})/2)\Nx ({\rm U}_{alt}+{\rm U}_{sym})-f(({\rm U}_{sym}-{\rm U}_{alt})/2)\Nx ({\rm U}_{sym}-{\rm U}_{alt})\)+\\+g(({\rm U}_{alt}+{\rm U}_{sym})/2)+g(({\rm U}_{sym}-{\rm U}_{alt})/2)
\end{multline}
and
\begin{multline}
\Dt {\rm U}_{alt}=\Dx {\rm U}_{alt}+\\+\frac12\(f(({\rm U}_{alt}+{\rm U}_{sym})/2)\Nx ({\rm U}_{alt}+{\rm U}_{sym})+f(({\rm U}_{sym}-{\rm U}_{alt})/2)\Nx ({\rm U}_{sym}-{\rm U}_{alt})\)+\\+g(({\rm U}_{alt}+{\rm U}_{sym})/2)-g(({\rm U}_{sym}-{\rm U}_{alt})/2)
\end{multline}
The obtained system is a system of $16$ RDA equations first $8$ of which are endowed by the Neumann boundary conditions and the second $8$ equations have Dirichlet boundary conditions. Since the attractor of this system contains the attractor of the system \eqref{5.U}, it also does not possess an inertial manifold.
\par
This example confirms once more that the existence or non-existence of IMs for the systems of RDA equations strongly depends on the choice of boundary conditions.

\bibliography{ReferencesI}

\end{document}